\newtheorem{thm}{Theorem}[section]
\newtheorem{cor}[thm]{Corollary}
\newtheorem{prop}[thm]{Proposition}
\newtheorem{defn}[thm]{Definition}
\newtheorem{rem}[thm]{Remark}
\theoremstyle{definition}
\numberwithin{equation}{section}
\renewcommand{\Re}{\hbox{Re}\,}
\renewcommand{\Im}{\hbox{Im}\,}
\newcommand{\C}{\mathbb{C}}
\newcommand{\N}{\mathbb{N}}
\newcommand{\R}{\mathbb{R}}
\newcommand{\supp}{\operatorname{supp}}
\def\hat{\widehat}
\def\tilde{\widetilde}
\def \bfo {\begin {eqnarray*} }
\def \efo {\end {eqnarray*} }
\def \ba {\begin {eqnarray*} }
\def \ea {\end {eqnarray*} }
\def \beq {\begin {eqnarray}}
\def \eeq {\end {eqnarray}}
\def \supp {\hbox{supp }}
\def \p {\partial}
\def\hat{\widehat}
\def\tilde{\widetilde}
\def \bfo {\begin {eqnarray*} }
\def \efo {\end {eqnarray*} }
\def \ba {\begin {eqnarray*} }
\def \ea {\end {eqnarray*} }
\def \beq {\begin {eqnarray}}
\def \eeq {\end {eqnarray}}
\def \supp {\hbox{supp }}
\def \p {\partial}
\begin{document}

 \title[Inverse problems for a magnetic Schr\"odinger operator ]{Inverse problems with partial data for a magnetic Schr\"odinger operator in an infinite slab and on a bounded domain}

\author[Krupchyk]{Katsiaryna Krupchyk}

\address
        {K. Krupchyk, Department of Mathematics and Statistics \\
         University of Helsinki\\
         P.O. Box 68 \\
         FI-00014   Helsinki\\
         Finland}

\email{katya.krupchyk@helsinki.fi}

\author[Lassas]{Matti Lassas}

\address
        {M. Lassas, Department of Mathematics and Statistics \\
         University of Helsinki\\
         P.O. Box 68 \\
         FI-00014   Helsinki\\
         Finland}

\email{matti.lassas@helsinki.fi}

\author[Uhlmann]{Gunther Uhlmann}

\address
       {G. Uhlmann, Department of Mathematics\\
       University of Washington\\
       Seattle, WA  98195-4350\\
           and\\
       Department of Mathematics\\
       340 Rowland Hall University of California\\
       Irvine, CA 92697-3875\\
       USA}
       
\email{gunther@math.washington.edu}

\maketitle

\begin{abstract} 

In this paper we study inverse boundary value problems with partial data for the magnetic Schr\"odinger operator. In the case of an infinite slab in $\R^n$, $n\ge 3$, 
we establish that the magnetic field and the electric potential can be determined uniquely, when the Dirichlet and Neumann data are given either on the different boundary hyperplanes of the slab or on the same hyperplane.  This is a 
generalization of the results of \cite{LiUhl2010}, obtained for the Schr\"odinger operator  without magnetic potentials. 

In the case of a bounded domain in $\R^n$, $n\ge 3$, extending the results of \cite{Ammari_Uhlmann_2004}, we show the unique determination of  the magnetic field and electric potential from the Dirichlet and Neumann data,  given on two arbitrary open subsets of the boundary, provided that  the magnetic and electric potentials are known in a neighborhood of the boundary.  Generalizing the results of \cite{Isakov_2007}, we also obtain uniqueness results for the magnetic Schr\"odinger operator, when the Dirichlet and Neumann data are known on the same part of the boundary, assuming that the inaccessible part of the boundary is a part of a hyperplane.

\end{abstract}

\section{Introduction and statement of results}

The purpose of this paper is to study inverse boundary value problems with partial data for the magnetic Schr\"odinger operator
 on a bounded domain in $\R^n$, $n\ge 3$, as well as in an infinite slab in $\R^n$. 

We shall start by discussing the case of the slab.  Let $\Sigma\subset\R^n$, $n\ge 3$, be an infinite slab between two parallel hyperplanes $\Gamma_1$ and $\Gamma_2$. 
Without loss of generality, we shall assume that 
\[
\Sigma=\{x=(x',x_n)\in\R^n:x'=(x_1,\dots,x_{n-1})\in \R^{n-1},0<x_n<L\}, \quad L>0,
\]
and 
\[
\Gamma_1=\{x\in\R^n:x_n=L\},\quad \Gamma_2=\{x\in\R^n:x_n=0\}. 
\]
Consider the magnetic Schr\"odinger operator
\[  
\mathcal{L}_{A,q}(x,D)=\sum_{j=1}^n(D_j+A_j(x))^2+q(x),
\]
with magnetic potential $A=(A_j)_{1\le j\le n}\in W^{1,\infty}(\Sigma,\C^n)$ and electric potential $q\in L^\infty(\Sigma,\C)$. Here $D=i^{-1}\nabla$. 
In what follows, we shall  assume that $A$ and $q$ are compactly supported. 
According to Proposition \ref{prop_essential_spec} in Appendix A, the operator $\mathcal{L}_{A,q}(x,D)$, equipped with the domain $H^1_0(\Sigma)\cap H^2(\Sigma)$ is closed and its essential spectrum is equal to $[\pi^2/L^2,+\infty)$.

We shall be concerned with the following Dirichlet problem,
\begin{equation}
\label{eq_Dirichlet_problem}
\begin{aligned}
(\mathcal{L}_{A,q}(x,D)-k^2)u(x)&=0\quad \textrm{in}\quad \Sigma,\\
u&=f\quad\textrm{on}\quad \Gamma_1,\\
u&=0\quad\textrm{on}\quad \Gamma_2,
\end{aligned}
\end{equation}
where $k\ge 0$ is fixed and  $f\in H^{3/2}(\Gamma_1)$ is with compact support in $\Gamma_1$. When $k<\pi/L$ and $k^2$ avoids the eigenvalues of $\mathcal{L}_{A,q}$, the  problem \eqref{eq_Dirichlet_problem} has a unique solution  $u\in H^2(\Sigma)$. When the spectral parameter $k^2$ is on the essential spectrum of $\mathcal{L}_{A,q}$, to discuss the solvability of the problem  \eqref{eq_Dirichlet_problem},  in Appendix A we introduce the notion of an admissible frequency $k$ and an admissible solution $u$.  Roughly speaking,  the notion of admissibility of a solution $u$  means that a finite number of the Fourier coefficients of $u$ with respect to $x_n$ satisfy the Sommerfeld radiation condition at infinity.   Furthermore, when $A$ and $q$ are real, so that the operator $\mathcal{L}_{A,q}$ is self-adjoint, we show in   Proposition \ref{prop_admiss_fr} that  if $k\ge \pi/L$ is such that $k^2$ avoids the embedded eigenvalues and the set of thresholds $\{(\pi l/L)^2:l=1,2,\dots\}$ of $\mathcal{L}_{A,q}$, then $k$ is admissible for $\mathcal{L}_{A,q}$. 

If $k$ is admissible  for the operator $\mathcal{L}_{A,q}$, we show in Appendix A that  the problem \eqref{eq_Dirichlet_problem} has a unique admissible solution $u$. 
 Notice that $u\in H^2_{\textrm{loc}}(\overline{\Sigma})$, where 
we recall that 
 \[
 H^2_{\textrm{loc}}(\overline{\Sigma})=\{u|_{\Sigma}:u\in H^2_{\textrm{loc}}(\R^n)\}. 
 \]
We define the Dirichlet--to-Neumann map for the magnetic Schr\"odinger operator in the infinite slab $\Sigma$ by
\[
\mathcal{N}_{A,q}: H^{3/2}(\Gamma_1)\cap \mathcal{E}'(\Gamma_1)\to H^{1/2}_{\textrm{loc}}(\p \Sigma),\quad  f\mapsto (\p_{\nu}+iA\cdot\nu) u|_{\p \Sigma},
\]
where $u$ is the solution of \eqref{eq_Dirichlet_problem}.  Here $\nu$ is the unit outer normal to the boundary $\p \Sigma=\Gamma_1\cup\Gamma_2$.

As it was noticed in \cite{Sun_1993}, the Dirichlet--to--Neumann map is invariant under  gauge transformations of the magnetic potential. It follows from the identities  
\begin{equation}
\label{eq_gauge}
e^{-i\Psi}\mathcal{L}_{A,q}e^{i\Psi}=\mathcal{L}_{ A+\nabla\Psi,  q},\quad e^{-i\Psi}\mathcal{N}_{ A, q} e^{i\Psi}=\mathcal{N}_{A+\nabla \Psi,q},
\end{equation}
that  $\mathcal{N}_{A, q} =\mathcal{N}_{A+\nabla \Psi, q}$ when $\Psi\in C^{1,1}(\overline \Sigma)$ compactly supported is such that $\Psi|_{\p\Sigma}=0$.  Thus, $\mathcal{N}_{A,q}$ carries only information about the magnetic field $dA$, where $A$ is viewed as the $1$-form $\Sigma_{j=1}^n A_jdx_j$.  

We shall now state two main results of this paper, which generalize the corresponding results of \cite{LiUhl2010}, obtained in the case of the  Schr\"odinger operator without a magnetic potential. 
The first result, concerning the case when the data and the measurements are on different boundary hyperplanes,  is as follows.

\begin{thm} 
\label{thm_main_1}

Let $\Sigma\subset\R^n$, $n\ge 3$, be an infinite slab between two parallel hyperplanes $\Gamma_1$ and $\Gamma_2$, and let  $A^{(j)}\in W^{1,\infty}(\Sigma,\C^n)\cap \mathcal{E}'(\overline{\Sigma},\C^n)$,  $q^{(j)}\in L^\infty(\Sigma,\C)\cap \mathcal{E}'(\overline{\Sigma},\C)$, $j=1,2$. Denote by $B$ an open ball in $\R^n$, containing the supports of $A^{(j)}$, $q^{(j)}$, $j=1,2$, and let $\gamma_j\subset\Gamma_j$ be arbitrary open sets  such that 
\[
\Gamma_j\cap\overline{B}\subset \gamma_j,\quad j=1,2.
\]
Assume that $k\ge 0$ is admissible  in the sense of  Definition  \emph{\ref{def_adm_k}} for the operator $\mathcal{L}_{A^{(j)},q^{(j)}}$ and its real transpose $\mathcal{L}_{-A^{(j)},q^{(j)}}$, $j=1,2$.   If
\begin{equation}
\label{eq_DN_maps_coincide}
\mathcal{N}_{A^{(1)},q^{(1)}}(f)|_{\gamma_2}=\mathcal{N}_{A^{(2)},q^{(2)}}(f)|_{\gamma_2}, 
\end{equation}
for any $f\in H^{3/2}(\Gamma_1)$, $\supp(f)\subset \gamma_1$, then $dA^{(1)}=dA^{(2)}$ and $q^{(1)}=q^{(2)}$ in $\Sigma$. 

\end{thm}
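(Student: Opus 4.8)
The plan is to adapt the classical Bukhgeim--Uhlmann reflection approach, combined with complex geometrical optics (CGO) solutions, to the slab geometry. Since $A^{(j)},q^{(j)}$ are compactly supported in $\Sigma$, I would first extend them to all of $\R^n$ by zero, and then exploit the slab's product structure by \emph{reflecting} across the hyperplanes $\Gamma_1$ and $\Gamma_2$. The Dirichlet condition $u=0$ on $\Gamma_2$ is the natural one for odd reflection; iterating reflections across both $\Gamma_1$ and $\Gamma_2$ unfolds the slab into all of $\R^n$ (or into a periodic configuration), converting the partial-data slab problem into a problem where CGO solutions for the full-space magnetic Schr\"odinger operator can be deployed. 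The payoff of the hypothesis $\Gamma_j\cap\overline B\subset\gamma_j$ is that, because the potentials vanish near the complement of $\gamma_j$, the reflected solutions and the data controlled on $\gamma_j$ suffice to kill all the boundary terms arising away from the measurement sets.

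The analytic heart is the standard integral identity. Let $u_1$ solve the Dirichlet problem for $\mathcal{L}_{A^{(1)},q^{(1)}}-k^2$ with data $f$ on $\Gamma_1$, and let $u_2$ solve the problem for the transpose operator $\mathcal{L}_{-A^{(2)},q^{(2)}}-k^2$. Multiplying $(\mathcal{L}_{A^{(1)},q^{(1)}}-k^2)u_1=0$ by $\overline{u_2}$, integrating over $\Sigma$, integrating by parts twice, and using \eqref{eq_DN_maps_coincide} to cancel the boundary integrals over $\gamma_1\cup\gamma_2$, I expect to arrive at the orthogonality relation
\begin{equation}
\label{eq_int_identity_plan}
\int_{\Sigma}\Big[2i(A^{(1)}-A^{(2)})\cdot(\overline{u_2}\,\nabla u_1)+\big((A^{(1)})^2-(A^{(2)})^2+q^{(1)}-q^{(2)}\big)u_1\overline{u_2}\Big]\,dx=0,
\end{equation}
valid for all admissible solutions. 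The admissibility hypothesis on $k$ for both $\mathcal{L}_{A^{(j)},q^{(j)}}$ and the transpose $\mathcal{L}_{-A^{(j)},q^{(j)}}$ guarantees that these solutions exist, are unique, and decay appropriately so that the boundary terms at spatial infinity vanish.

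Into \eqref{eq_int_identity_plan} I would substitute CGO solutions of the form $u_j=e^{\zeta_j\cdot x}(a_j+r_j)$, where $\zeta_j\in\C^n$ satisfies $\zeta_j\cdot\zeta_j=0$ (adjusted by $k^2$ and the magnetic terms), $a_j$ solves a $\overline\partial$-type transport equation encoding the magnetic potential, and the remainder $r_j\to 0$ in a suitable Carleman-weighted estimate as the complex frequency tends to infinity. The transport equation must be solvable on the whole slab, which is where the compact support of $A$ is used: one solves $\zeta\cdot(\nabla - iA)a=0$ by a Cauchy-transform argument in a fixed complex plane. Passing to the limit in \eqref{eq_int_identity_plan}, the leading-order terms yield that the Fourier transform of a suitable combination of $dA^{(1)}-dA^{(2)}$ vanishes along all admissible $\zeta$-directions, giving $dA^{(1)}=dA^{(2)}$; a second-order expansion, after a gauge transformation reducing to the case $A^{(1)}=A^{(2)}$, then yields $q^{(1)}=q^{(2)}$.

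The main obstacle I anticipate is controlling the boundary terms in the reflection/integration-by-parts step in the non-compact slab: one must ensure that the CGO solutions, which generically grow in some directions, can be chosen with their complex phase $\zeta$ aligned so that the reflected extensions remain admissible and the contributions over $\partial\Sigma\setminus(\gamma_1\cup\gamma_2)$ and at infinity genuinely vanish. Concretely, the vectors $\zeta_j$ cannot be chosen freely as in the bounded-domain case; they must be compatible with the slab's hyperplane symmetry so that the reflection trick applies, which restricts the recoverable Fourier frequencies of $dA$ and forces a careful limiting argument to recover \emph{all} frequencies and hence the full magnetic field and potential.
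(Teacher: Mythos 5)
Your overall strategy (reflection, CGO solutions, an integral orthogonality identity) is the same in spirit as the paper's, but there are two genuine gaps at the core of the argument. First, your identity \eqref{eq_int_identity_plan} is posed over the whole slab $\Sigma$, and you intend to substitute CGO solutions into it. This cannot work as stated: CGO solutions with linear phase $e^{\zeta\cdot x/h}$ grow exponentially as $|x'|\to\infty$ in some horizontal direction, so they are neither admissible solutions in the sense of Definition \ref{def_adm_k} nor even constructible on the unbounded slab by the Carleman/Cauchy-transform machinery, which requires a bounded domain and compactly supported (or decaying) coefficients. The paper resolves this by first \emph{localizing} the identity to the bounded set $\Sigma\cap B$: the difference $w=v-u_1$ solves the free Helmholtz equation outside $B$, has vanishing Cauchy data on $\gamma_2\setminus\overline{l_2}$, and hence vanishes on $\Sigma\setminus\overline B$ by unique continuation --- this is the actual payoff of the hypothesis $\Gamma_j\cap\overline B\subset\gamma_j$. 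It then proves a Runge-type density theorem (Proposition \ref{prop_Runge}): global admissible solutions with data in $\gamma_1$ are $L^2(\Sigma\cap B)$-dense in the space of \emph{local} solutions on $\Sigma\cap B$ vanishing on $l_2$, and only after this density step may one insert CGO solutions built on $\Sigma\cap B$. Note also that the admissibility of the real transpose $\mathcal{L}_{-A^{(j)},q^{(j)}}$ is used exactly there, to solve the adjoint source problem in the Hahn--Banach duality argument --- not, as you assert, to control ``boundary terms at spatial infinity'' in the identity.

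Second, your reflection scheme --- iterating reflections across both $\Gamma_1$ and $\Gamma_2$ to unfold the slab into $\R^n$ or a periodic configuration --- would produce potentials periodic in $x_n$, which are not compactly supported, so the CGO construction again fails. The paper instead reflects each solution exactly \emph{once}: $u_1$ across $x_n=0$ (so $u_1|_{l_2}=0$) and $u_2$ across $x_n=L$ (so $u_2|_{l_1}=0$), on the bounded doubled domains $(\Sigma\cap B)\cup(\Sigma\cap B)_0^*$ and $(\Sigma\cap B)\cup(\Sigma\cap B)_L^*$; the reflected potentials are then merely Lipschitz, which is why the Knudsen--Salo construction is invoked. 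You correctly anticipate that the phase cross terms constrain the admissible frequency vectors (the paper imposes $\mu^{(2)}_n>0$ and recovers all of $dA$ via Proposition \ref{prop_33-}), but two further steps you gloss over are essential in the magnetic case: the limiting identity retains the factor $e^{\Phi_1^{(0)}+\overline{\Phi_2^{(0)}}}$, whose removal requires the holomorphic-extension/argument-principle argument of Proposition \ref{prop_32}, and the reduction to $A^{(1)}=A^{(2)}$ requires proving that the gauge function $\Psi$ with $\nabla\Psi=A^{(1)}-A^{(2)}$ vanishes on $\partial\Sigma$, which again uses the holomorphic machinery rather than following automatically from $dA^{(1)}=dA^{(2)}$.
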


The assumption that $k\ge 0$ is admissible for the real transpose $\mathcal{L}_{-A^{(j)},q^{(j)}}$ of the operator $\mathcal{L}_{A^{(j)},q^{(j)}}$ is needed  when proving a Runge type approximation result in the infinite slab.  We would also  like to remark that when the operator  $\mathcal{L}_{A^{(j)},q^{(j)}}$  is self-adjoint and $k^2$ is not an eigenvalue and not in the set of thresholds $\{(\pi l/L)^2:l=1,2,\dots\}$ of the operator $\mathcal{L}_{A^{(j)},q^{(j)}}$ , then $k\ge 0$ is admissible for both operators $\mathcal{L}_{A^{(j)},q^{(j)}}$  and $\mathcal{L}_{-A^{(j)},q^{(j)}}$.

Notice that 
if the supports of the coefficients $A^{(j)}$, $q^{(j)}$ are strictly contained in the interior of the slab, then the regions $\gamma_1$ and $\gamma_2$ in Theorem \ref{thm_main_1} can be taken arbitrarily small.  

The next result deals with the inverse problem with the measurements and the data given on the same boundary hyperplane.  

\begin{thm} 
\label{thm_main_2}

Let $\Sigma\subset\R^n$, $n\ge 3$, be an infinite slab between two parallel hyperplanes $\Gamma_1$ and $\Gamma_2$, and let  $A^{(j)}\in W^{1,\infty}(\Sigma,\C^n)\cap \mathcal{E}'(\overline{\Sigma},\C^n)$,  $q^{(j)}\in L^\infty(\Sigma,\C)\cap \mathcal{E}'(\overline{\Sigma},\C)$, $j=1,2$.   Denote by $B$ an open ball in $\R^n$, containing the supports of $A^{(j)}$, $q^{(j)}$, $j=1,2$, and let $\gamma_1,\gamma_1'\subset\Gamma_1$ be arbitrary open sets   such that 
\[
\Gamma_1\cap\overline{B}\subset \gamma_1,\quad \Gamma_1\cap\overline{B}\subset\gamma_1'.
\]
Assume that $k\ge 0$ is admissible  in the sense of  Definition  \emph{\ref{def_adm_k}} for the operator $\mathcal{L}_{A^{(j)},q^{(j)}}$ and its real transpose $\mathcal{L}_{-A^{(j)},q^{(j)}}$, $j=1,2$.
 If 
\[
\mathcal{N}_{A^{(1)},q^{(1)}}(f)|_{\gamma_1'}=\mathcal{N}_{A^{(2)},q^{(2)}}(f)|_{\gamma_1'}, 
\]
for any $f\in H^{3/2}(\Gamma_1)$, $\supp(f)\subset \gamma_1$, then  $dA^{(1)}=dA^{(2)}$ and $q^{(1)}=q^{(2)}$ in $\Sigma$. 

\end{thm}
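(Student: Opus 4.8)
The plan is to mimic the proof of Theorem \ref{thm_main_1}, the new ingredient being a reflection across the completely inaccessible hyperplane $\Gamma_2$, in the spirit of \cite{Isakov_2007}. Since $\Gamma_2=\{x_n=0\}$ is flat, I would form the doubled slab $\Sigma^\sharp=\{x\in\R^n:-L<x_n<L\}$, bounded by $\Gamma_1=\{x_n=L\}$ and its mirror image $\Gamma_1^\sharp=\{x_n=-L\}$, and transport the construction of Theorem \ref{thm_main_1} to $\Sigma^\sharp$. The point of doubling is that every solution of \eqref{eq_Dirichlet_problem} vanishes on $\Gamma_2$, so it admits a natural odd continuation across $\{x_n=0\}$; this lets me manufacture, inside $\Sigma$, the geometric optics solutions needed for the integral identity while controlling their behaviour on the inaccessible face.

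First I would extend the coefficients to $\Sigma^\sharp$ by the rule dictated by odd reflection of solutions: the tangential components $A_1,\dots,A_{n-1}$ and the potential $q$ are continued evenly, and the normal component $A_n$ is continued oddly. A direct computation shows that, with the reflected coefficients $A^{(j),\sharp},q^{(j),\sharp}$, the odd continuation $u^\sharp(x',x_n)=-u(x',-x_n)$ for $x_n<0$ of any solution of $\mathcal{L}_{A^{(j)},q^{(j)}}u=k^2u$ on $\Sigma$ vanishing on $\Gamma_2$ extends it to a solution of $\mathcal{L}_{A^{(j),\sharp},q^{(j),\sharp}}$ on all of $\Sigma^\sharp$, and that the reflected operators are themselves invariant under this reflection, so $x\mapsto -w(x',-x_n)$ is again a solution whenever $w$ is. The coefficients stay compactly supported, but the odd continuation of $A_n$ is Lipschitz across $\{x_n=0\}$ only if $A_n|_{\Gamma_2}=0$; I would arrange this beforehand by a gauge transformation $A^{(j)}\mapsto A^{(j)}+\nabla\Psi^{(j)}$ with $\Psi^{(j)}$ compactly supported, $\Psi^{(j)}|_{\partial\Sigma}=0$ and $\partial_n\Psi^{(j)}=-A_n^{(j)}$ on $\Gamma_2$, which by \eqref{eq_gauge} alters neither $\mathcal{N}_{A^{(j)},q^{(j)}}$ nor the magnetic field $dA^{(j)}$.

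Next I would derive the orthogonality relation underlying the method. For $u_1$ solving $\mathcal{L}_{A^{(1)},q^{(1)}}u_1=k^2u_1$ with $u_1|_{\Gamma_2}=0$, $u_1|_{\Gamma_1}=f_1$, and $u_2$ solving the transpose $\mathcal{L}_{-A^{(2)},q^{(2)}}u_2=k^2u_2$ with $u_2|_{\Gamma_2}=0$, $u_2|_{\Gamma_1}=f_2$, the integral identity produced by Green's formula has no contribution from $\Gamma_2$, since both solutions vanish there. Its $\Gamma_1$ contribution is expressed, exactly as in the proof of Theorem \ref{thm_main_1}, through the Dirichlet--to--Neumann data, and after the symmetry relation between $\mathcal{N}$ and its transpose it reduces to an integral of $\left(\mathcal{N}_{A^{(1)},q^{(1)}}(f_1)-\mathcal{N}_{A^{(2)},q^{(2)}}(f_1)\right)$ against $f_2$ over $\Gamma_1$; this vanishes by \eqref{eq_DN_maps_coincide} as soon as $\supp f_1\subset\gamma_1$ and $\supp f_2\subset\gamma_1'$. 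Hence $\int_{\Sigma}$ of the coefficient differences against $u_1u_2$ vanishes for all such boundary--data solutions. I would then construct, through the geometric optics machinery of Theorem \ref{thm_main_1} applied on $\Sigma^\sharp$, solutions $w_1$ of $\mathcal{L}_{A^{(1),\sharp},q^{(1),\sharp}}$ and $w_2$ of $\mathcal{L}_{-A^{(2),\sharp},q^{(2),\sharp}}$, and pass to their odd parts $u_j(x)=\tfrac12\left(w_j(x',x_n)-w_j(x',-x_n)\right)$, which are again solutions and vanish on $\Gamma_2$ by the reflection symmetry noted above. The Runge type approximation of Theorem \ref{thm_main_1}---precisely the step requiring the admissibility of $k$ for the transposes $\mathcal{L}_{-A^{(j)},q^{(j)}}$---then approximates these odd geometric optics solutions, in $B$, by boundary--data solutions with data supported in $\gamma_1$, respectively $\gamma_1'$, so that the orthogonality relation persists with $u_1,u_2$ the odd geometric optics solutions.

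Letting the complex frequency tend to infinity then recovers, exactly as in the proof of Theorem \ref{thm_main_1}, the Fourier transforms of $d(A^{(1),\sharp}-A^{(2),\sharp})$ and of $q^{(1),\sharp}-q^{(2),\sharp}$, whence $dA^{(1)}=dA^{(2)}$ and $q^{(1)}=q^{(2)}$ on $\Sigma^\sharp$ and, by restriction, on $\Sigma$. The main obstacle I expect lies entirely in making the reflection compatible with the magnetic geometric optics construction: one must keep $A^{\sharp}$ Lipschitz across $\Gamma_2$ (handled by the gauge fix above), verify that the odd symmetrization does not destroy the asymptotics of the geometric optics solutions, and track the parities of the various components of $A$ and $q$ so that the symmetrized solutions still reach every component of $dA$ and the potential. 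The slab geometry is what makes this feasible, since the large complex frequency of the solutions of Theorem \ref{thm_main_1} is tangential to the hyperplanes and therefore essentially commutes with the reflection $x_n\mapsto -x_n$, so the odd parts retain both their decay away from $B$ along $\Gamma_1$ and the oscillatory factor needed to extract the Fourier transform.
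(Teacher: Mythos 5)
Your overall architecture (gauge fixing so that $A^{(j)}\cdot\nu=0$ on $\Gamma_2$, doubling the slab across the inaccessible hyperplane, odd parts of complex geometric optics solutions on the doubled domain, Runge approximation requiring admissibility for the transposes, high-frequency limit) is the same as the paper's, and your derivation of the integral identity and the even/odd reflection rule for the coefficients are correct. The genuine gap is at the decisive technical step, and the reason you give for why it should work is false. When \emph{both} solutions are reflected across the same hyperplane $\{x_n=0\}$, the product $u_1\overline{u_2}$ contains, besides the two diagonal terms with bounded oscillatory phases $e^{ix\cdot\xi}$ and $e^{i(x',-x_n)\cdot\xi}$, two cross terms whose exponents are $x\cdot\zeta_1/h+(x',-x_n)\cdot\overline{\zeta_2}/h$ and $(x',-x_n)\cdot\zeta_1/h+x\cdot\overline{\zeta_2}/h$. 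With the frequency choice \eqref{eq_zeta_1_2} of Theorem \ref{thm_main_1}, where $\mu^{(2)}_n>0$, these have real parts $\pm 2\mu^{(2)}_nx_n/h$, so one of them grows like $e^{2\mu^{(2)}_nx_n/h}$ as $h\to 0$ and the limit identity you want fails. In Theorem \ref{thm_main_1} this problem does not arise because $u_1$ and $u_2$ are reflected across \emph{different} hyperplanes, and all three cross terms in \eqref{eq_pha} decay precisely because $\mu^{(2)}_n>0$. Your claim that ``the large complex frequency is tangential to the hyperplanes and therefore essentially commutes with the reflection'' is incorrect for that choice: the real direction $\mu^{(2)}$ is deliberately non-tangential there, and the reflected frequency $\zeta_j^*$ differs from $\zeta_j$ exactly in the component that produces the growing factor.

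The paper repairs this by changing the frequency constraints to $\mu^{(2)}_n=0$ and $\mu^{(1)}_n\ne 0$: then the cross terms are purely oscillatory with frequencies $\tilde\xi_\pm$ of size $\sim 2|\mu^{(1)}_n|/h\to\infty$, and they vanish in the limit by the Riemann--Lebesgue lemma together with $\|\Phi_j-\Phi_j^{(0)}\|_{L^\infty}\to 0$, while \emph{both} diagonal terms survive and combine into a Fourier transform of the reflected coefficients over the doubled domain $(\Sigma\cap B)\cup(\Sigma\cap B)_0^*$. This repair has a price your proposal does not account for: the admissible triples $(\xi,\mu^{(1)},\mu^{(2)})$ are now constrained by \eqref{eq_vect_1}, so the resulting identity \eqref{eq_plane_3} pairs $\hat{(\tilde A^{(1)}-\tilde A^{(2)})}(\xi)$ only against vectors in $\mathrm{span}\{\mu^{(1)},\mu^{(2)}\}$ with $\mu^{(2)}_n=0$, $\mu^{(1)}_n\ne 0$, and similarly for the potentials. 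Recovering the full curl $d\tilde A^{(1)}=d\tilde A^{(2)}$ and the full Fourier transform of $q^{(1)}-q^{(2)}$ from this restricted family is not automatic; it is the content of Proposition \ref{prop_curl_thm_2}, which needs separate arguments for $n=3$ (via the decomposition $v=v_\xi+v_\perp$) and $n\ge 4$, and of the explicit vector choices at the end of Section 4. Without the modified frequency choice and these supplementary recovery arguments, the proof does not go through.
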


The main technical tool in proving Theorem \ref{thm_main_1} and Theorem \ref{thm_main_2} is  the construction of  complex geometric optics  solutions \cite{C,SU} with linear phases for the magnetic Schr\"odinger operator, vanishing along a boundary hyperplane.  The idea of constructing such solutions in the case of the Schr\"odinger operator without a magnetic potential,  is based on a  reflection argument and is due to  \cite{Isakov_2007}.  It was applied to the inverse boundary value problem for the Schr\"odinger operator in an infinite slab in the work  \cite{LiUhl2010}, which was our starting point. We would like to emphasize that the case of the Schr\"odinger operator with a magnetic potential is considerably more involved, than the case without magnetic potential, studied in \cite{LiUhl2010}. This is due, in particular, to the fact that a reflection argument with respect to a boundary hyperplane leads to a magnetic potential which is in general only Lipschitz continuous. The construction of complex geometric optics solutions in this case is consequently more complicated, as already seen in  \cite{DKSU_2007} and  \cite{Knu_Salo_2007}.   

When exploiting the complex geometric optics solutions obtained by a reflection argument, we have to control the products of the various phases of the solutions, in the high frequency limit. This leads to some additional constraints 
on the choice of the complex frequency vectors in the phases, which have to be respected when recovering the components of the magnetic field. Notice also that rather than using boundary Carleman estimates in the proof of Theorem \ref{thm_main_1}, as it was done in \cite{LiUhl2010}, here we proceed instead by reflecting both solutions with respect to the different boundary hyperplanes.

Let us consider next physical applications related to Theorems \ref{thm_main_1}  and
\ref{thm_main_2}.
Inverse problems for the Schr\"odinger equation in the slab geometry are encountered in
imaging
of  thin  specimens. A situation analogous to Theorem \ref{thm_main_1}, where sources
are located on one boundary hyperplane
  of  the slab and the field is measured on the other boundary hyperplane, 
 is encountered in the
Transmission Electron Microscopy  (TEM) \cite{M2, M1},  where a beam of electrons is transmitted
through a thin specimen. In TEM
the boundary values on the upper side of the slab are controlled by the electromagnetic
lenses
which manipulate the incoming beam and the  electrons transmitted through the specimen
are detected
below the lower side of the slab.
We note that in TEM with  high energy electrons, 
 the problem is often analyzed using the geometrical optics approximation which leads to
 a problem of integral geometry \cite{M2,Quinto},  but the models
 based directly on the Schr\"odinger equation (see discussion in  \cite[Section 4]{M2})
 are also used.

Situations analogous to Theorem \ref{thm_main_2}, where  the sources are on
the same boundary hyperplane
 of the slab where the fields are detected,  are  also encountered in many electron
microscope applications.
The
  Scanning Tunneling
 Microscope (STM) (see \cite{M3,M2})
and the Dual-tip STM (see \cite{M4})
 are based on the quantum tunneling of electrons between
 a conducting tip (or two conducting tips) and the surface of the  material (i.e.
slab) to be examined.
 If  imaged specimen  is lying on a surface in which electrons cannot propagate,
 the wave function satisfies the Dirichlet boundary
condition on the lower boundary hyperplane. Then, the conducting tips correspond
to both the source and the detection devices, and these measurements
can be modeled using the Dirichlet-to-Neumann map on the  upper boundary hyperplane.

Inverse problems for the Schr\"odinger equation  in a slab
are encountered also in optical tomography \cite{O1}, see the remark at the end of Section 4 for a more detailed discussion.  

Concerning inverse problems in the slab geometry, we would like to mention that apart from \cite{LiUhl2010}, inverse conductivity problems of recovering an unknown embedded object in an infinite slab were studied in \cite{Ikehata_2001, Salo_Wang_2006}, while an inverse scattering problem for the Schr\"odinger operator in a slab was considered in  \cite{CGIF_2005}.

In the remainder of this introduction we shall be concerned with inverse boundary value problems for the magnetic Schr\"odinger operator on a bounded domain.   
Let  $\Omega\subset\R^n$, $n\ge 3$, be a bounded domain with $C^\infty$ boundary.  Consider the following Dirichlet problem,
\begin{equation}
\label{eq_Dir_bounded}
\begin{aligned}
\mathcal{L}_{A,q}u&=0\quad \textrm{in} \quad \Omega,\\
u|_{\p \Omega}&=f,
\end{aligned}
\end{equation}
with $A\in W^{1,\infty}(\Omega,\C^n)$,  $q\in L^\infty(\Omega,\C)$, and  $f\in H^{3/2}(\p \Omega)$. 

The magnetic Schr\"odinger operator $\mathcal{L}_{A,q}$ in $L^2(\Omega)$, equipped with the domain $H^2(\Omega)\cap H^1_0(\Omega)$, is closed with the discrete spectrum. 
Let us make the following assumption:
\begin{itemize}
\item[\textbf{(A)}] $0$ is not an eigenvalue of the magnetic Schr\"odinger operator $\mathcal{L}_{A,q}:H^2(\Omega)\cap H^1_0(\Omega)\to L^2(\Omega)$.
\end{itemize}
Under the assumption (A), the Dirichlet problem \eqref{eq_Dir_bounded} has a unique solution $u\in H^2(\Omega)$, and we can introduce the Dirichlet--to--Neumann map
\[
\Lambda_{A,q}: H^{3/2}(\p \Omega)\to H^{1/2}(\p \Omega),\quad  f\mapsto (\p_{\nu}+iA\cdot\nu) u|_{\p \Omega},
\]
where $\nu$ is the unit outer normal to the boundary. 

Let $\gamma_1,\gamma_2\subset \p\Omega$ be non-empty open subsets of the boundary. We are interested in the inverse boundary value problem for the operator $\mathcal{L}_{A,q}$ with partial boundary measurements: assuming that 
\[
\Lambda_{A^{(1)},q^{(1)}}(f)|_{\gamma_2}=\Lambda_{A^{(2)},q^{(2)}}(f)|_{\gamma_2}, 
\]
for all $f\in H^{3/2}(\p\Omega)$, $\supp(f)\subset \gamma_1$, can we conclude that $dA^{(1)}=dA^{(2)}$ and $q^{(1)}=q^{(2)}$ in $\Omega$?

When measurements are done on the entire boundary,  inverse 
problems for  various second order elliptic
equations have been studied e.g.\ in \cite{AP,Buk,GLU1,N2,N1,PPU}.
For very non-regular coefficient functions there are counterexamples to
the uniqueness of the inverse problems
\cite{GLU3, GKLUbull} which are closely related to the so-called invisibility
cloaking \cite{GKLU1,GKLUsiam,GKLU8}.

Now in many applications,  performing measurements on the entire boundary could be either impossible or too cost consuming.   Therefore, the inverse boundary value problem with partial measurements, formulated above, is both natural and important, see e.g.\ \cite{ALP,KKL,KenSjUhl2007,LU,LTU,LeU} for related problems.   
To the best of our knowledge, the partial data problem still remains open in general, even in the absence of a magnetic potential.   In this case, under the assumption that $q^{(1)}=q^{(2)}$ in a neighborhood of the boundary of $\Omega$, the problem was settled in \cite{Ammari_Uhlmann_2004}.   Dropping this assumption, it was shown in \cite{BukhUhl_2002} that the electric potential can be  uniquely determined by the Dirichlet--to--Neumann map when $\gamma_1=\p \Omega$ and $\gamma_2$ is, roughly speaking, a half of the boundary.  In \cite{KenSjUhl2007}, this result was significantly improved and it was shown that $\gamma_2$ can be possibly very small, while it is still required that $\gamma_1$ and $\gamma_2$ should have a non-void intersection.  On the other hand, for special geometries of the domain, in \cite{Isakov_2007}, the identifiability result was established when $\gamma_1=\gamma_2$ is such that the remaining part of the boundary is contained in a hyperplane or a sphere. 

In the presence of a magnetic potential, the inverse problem of determining the magnetic field and the electric potential from partial boundary measurements was addressed in 
\cite{DKSU_2007}, when $\gamma_1=\p \Omega$ and $\gamma_2$ is possibly a very small subset of the boundary,  see also \cite{Knu_Salo_2007}. 
Under the assumption that $A^{(1)}=A^{(2)}$ and $q^{(1)}=q^{(2)}$ in a neighborhood of the boundary, in \cite{Ben_Joud} it is proven that the magnetic field and the electric potential can be uniquely determined by boundary measurements, provided that $\gamma_1=\p \Omega$ and $\gamma_2$ is arbitrary. Logarithmic stability estimates for this problem are also obtained in \cite{Ben_Joud}.

Under the assumption that $A^{(1)}=A^{(2)}$ and $q^{(1)}=q^{(2)}$ in a neighborhood of the boundary, generalizing the work \cite{Ammari_Uhlmann_2004}, we have the following simple result. 

\begin{thm}
\label{thm_AU}
Let $\Omega\subset \R^n$, $n\ge 3$,  be a bounded domain with $C^\infty$ connected boundary, and $A^{(j)}\in W^{1,\infty}(\Omega,\C^n)$ and $q^{(j)}\in L^\infty(\Omega,\C)$, $j=1,2$, be such that the assumption \emph{(A)} is satisfied for both operators.  Assume that 
$A^{(1)}=A^{(2)}$ and $q^{(1)}=q^{(2)}$ in a  neighborhood of the boundary $\p \Omega$. 
Let   $\gamma_1,\gamma_2\subset \p\Omega$ be   non-empty  open subsets of the boundary. If 
\[
\Lambda_{A^{(1)},q^{(1)}}(f)|_{\gamma_2}=\Lambda_{A^{(2)},q^{(2)}}(f)|_{\gamma_2},
\]
 for all $f\in H^{3/2}(\p\Omega)$, $\supp(f)\subset \gamma_1$, then $dA^{(1)}=dA^{(2)}$ and $q^{(1)}=q^{(2)}$ in $\Omega$.
\end{thm}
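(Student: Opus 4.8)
The plan is to reduce the partial-data hypothesis to the assertion that the \emph{full} Dirichlet--to--Neumann maps agree, $\Lambda_{A^{(1)},q^{(1)}}=\Lambda_{A^{(2)},q^{(2)}}$ on all of $\p\Omega$, and then to invoke the known uniqueness result for the magnetic Schr\"odinger operator from full boundary data (cf.\ \cite{DKSU_2007}), which yields $dA^{(1)}=dA^{(2)}$ and $q^{(1)}=q^{(2)}$ in $\Omega$. The entire reduction rests on the assumption that $A^{(1)}=A^{(2)}$ and $q^{(1)}=q^{(2)}$ in a neighborhood of $\p\Omega$, together with the connectedness of $\p\Omega$. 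I would fix a collar neighborhood $W\subset\Omega$ of the boundary on which the two operators coincide; since $\p\Omega$ is connected, $W$ may be taken connected as well.

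First I would show that, for every $f\in H^{3/2}(\p\Omega)$ with $\supp(f)\subset\gamma_1$, the two Neumann traces in fact agree on all of $\p\Omega$. Let $u_j\in H^2(\Omega)$ solve $\mathcal{L}_{A^{(j)},q^{(j)}}u_j=0$ with $u_j|_{\p\Omega}=f$ (these exist and are unique under the assumption (A)), and put $w=u_1-u_2$. On $W$ the coefficients coincide, so $\mathcal{L}_{A^{(1)},q^{(1)}}w=0$ there. Moreover $w|_{\p\Omega}=0$, while on $\gamma_2$ the coincidence of the full Neumann data, combined with $A^{(1)}=A^{(2)}$ near $\p\Omega$ (which makes the magnetic term $iA\cdot\nu\,w$ cancel since $w$ vanishes on the boundary), forces $\p_\nu w=0$. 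Thus $w$ has vanishing Cauchy data on the open set $\gamma_2$. Unique continuation for the second order elliptic operator $\mathcal{L}_{A^{(1)},q^{(1)}}$ with Lipschitz coefficients (extend $w$ by zero across $\gamma_2$ and propagate) then gives $w\equiv0$ throughout the connected collar $W$, so the full Cauchy data of $u_1,u_2$ agree and hence $(\Lambda_{A^{(1)},q^{(1)}}-\Lambda_{A^{(2)},q^{(2)}})f=0$ on all of $\p\Omega$.

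Next I would remove the restriction $\supp(f)\subset\gamma_1$ by passing to the transpose operators $\mathcal{L}_{-A^{(j)},q^{(j)}}$, which are again well posed under (A) since transposition preserves the kernel of an index-zero Fredholm operator, and by using the bilinear Green identity
\[
\int_{\p\Omega}(\Lambda_{A,q}f)\,g\,dS=\int_{\p\Omega}f\,(\Lambda_{-A,q}g)\,dS .
\]
By the previous step the integrand on the left with $\Lambda_{A^{(1)},q^{(1)}}-\Lambda_{A^{(2)},q^{(2)}}$ vanishes for all $f$ supported in $\gamma_1$ and \emph{every} $g$, whence the identity shows $(\Lambda_{-A^{(1)},q^{(1)}}-\Lambda_{-A^{(2)},q^{(2)}})g|_{\gamma_1}=0$ for all $g$. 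This is exactly the hypothesis of the first step for the transpose problem, with the roles of $\gamma_1,\gamma_2$ interchanged and with an \emph{unrestricted} input $g$; repeating the unique continuation argument gives $\Lambda_{-A^{(1)},q^{(1)}}=\Lambda_{-A^{(2)},q^{(2)}}$ on all of $\p\Omega$, and transposing once more yields $\Lambda_{A^{(1)},q^{(1)}}=\Lambda_{A^{(2)},q^{(2)}}$ as full maps. The full-data result then concludes the proof.

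The step I expect to demand the most care is the unique continuation: one must verify that $w$ genuinely has vanishing Cauchy data on the open piece $\gamma_2$ (this is precisely where $A^{(1)}=A^{(2)}$ near $\p\Omega$ is used, so that the magnetic contribution cancels) and that the collar $W$ is connected, so that the vanishing propagates across the whole neighborhood of $\p\Omega$; the regularity $A^{(j)}\in W^{1,\infty}(\Omega,\C^n)$ is exactly what makes the unique continuation principle for $\mathcal{L}_{A^{(j)},q^{(j)}}$ available.
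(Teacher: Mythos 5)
Your proof is correct, and after its first step it follows a genuinely different route from the paper's. The opening move coincides: both you and the paper take two solutions with the same Dirichlet data $f$ supported in $\gamma_1$, note that their difference $w$ solves the common equation on the set where the coefficients agree, has vanishing Cauchy data on $\gamma_2$, and apply unique continuation on a connected set reaching the boundary (you use a connected collar $W$; the paper uses $\Omega\setminus\overline{\Omega'}$ with $\Omega'$ containing $\supp(A^{(1)}-A^{(2)})$ and $\supp(q^{(1)}-q^{(2)})$, chosen so that the complement is connected). From there the paper \emph{localizes}: it derives a Green-identity integral formula on $\Omega'$, proves the Runge-type density result of Proposition~\ref{prop_Runge_sec3} (Hahn--Banach, well-posedness, and a second unique continuation), extends the coefficients to a ball, and reruns the complex geometric optics argument to recover $dA$ and $q$. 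You instead \emph{globalize}: using the transposition identity $\int_{\p\Omega}(\Lambda_{A,q}f)\,g\,dS=\int_{\p\Omega}f\,(\Lambda_{-A,q}g)\,dS$ — which is correct, and the transposed Dirichlet problems are indeed well posed under (A), since the transpose of a bijective operator of index zero is bijective — together with a second application of the same unique continuation argument, you upgrade the hypothesis to equality of the \emph{full} Dirichlet-to-Neumann maps and then quote a full-data uniqueness theorem. Your route is shorter and modular: it isolates the clean statement that partial data plus knowledge of the coefficients near the boundary implies full data, so any full-data theorem (including later improvements in regularity) finishes the proof. The paper's route is self-contained: it reuses the CGO machinery and Runge argument it needs anyway for Theorems~\ref{thm_main_1}, \ref{thm_main_2} and \ref{thm_Isak}, and does not depend on the precise hypotheses of an external theorem. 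The one point you should repair is the citation: \cite{DKSU_2007} assumes $C^2$ magnetic potentials, so it does not cover $A^{(j)}\in W^{1,\infty}(\Omega,\C^n)$; at Lipschitz regularity and for complex-valued coefficients the appropriate reference is \cite{Knu_Salo_2007} (or \cite{Salo_diss}), or else one runs the CGO construction of Section 2 of the paper on the extended domain — which is, in effect, exactly what the paper does.
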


In Theorem \ref{thm_AU}, the supports of $A^{(1)}-A^{(2)}$ and $q^{(1)}-q^{(2)}$ are not allowed to  come close to the boundary of $\Omega$. However, 
this condition can be weakened for special bounded domains, say, for domains of the form $\Omega=\omega\times [0,L]$. Here $\omega\subset \R^{n-1}$ is an open bounded domain in $\R^{n-1}$ with connected smooth boundary.  Assume that $A^{(1)}=A^{(2)}$ and $q^{(1)}=q^{(2)}$ near $\p \omega\times [0,L]$. 
If 
\[
\Lambda_{A^{(1)},q^{(1)}}(f)|_{\omega\times\{0\}}=\Lambda_{A^{(2)},q^{(2)}}(f)|_{\omega\times\{0\}},
\]
 for all $f\in H^{3/2}(\p(\omega\times[0,L]))$, $\supp(f)\subset \omega\times\{L\}$, then $dA^{(1)}=dA^{(2)}$ and $q^{(1)}=q^{(2)}$ in $\omega\times[0,L]$.
Notice  in particular that supports of $A^{(j)}$ and $q^{(j)}$ can approach the flat parts of the boundary of the cylinder, $\omega\times\{0\}$ and $\omega\times\{L\}$. 

This observation follows from the proof of 
Theorem \ref{thm_main_1}, when a ball $B\subset \R^n$ is replaced by a cylinder $\omega'\times[0,L]$, where $\omega'\subset\subset \omega$ is a domain in $\R^{n-1}$ with  connected smooth boundary, such that $\supp(A^{(1)}-A^{(2)}),\supp(q^{(1)}-q^{(2)})\subset \omega'\times [0,L]$.

Finally, we have the following generalization of a result from \cite{Isakov_2007} to the case of the magnetic Schr\"odinger operator, where the Dirichlet and Neumann data are known on the same part of the boundary, assuming that the inaccessible part of the boundary is a part of a hyperplane.

\begin{thm}
\label{thm_Isak}
Let $\Omega\subset\{\R^n:x_n>0\}$, $n\ge 3$,  be a bounded domain with connected $C^\infty$ boundary, and  let $\gamma_0=\p \Omega\cap \{x_n=0\}\ne\emptyset$ and $\gamma=\p \Omega\setminus\gamma_0$.  Let $A^{(j)}\in W^{1,\infty}(\Omega,\C^n)$ and $q^{(j)}\in L^\infty(\Omega,\C)$, $j=1,2$,  be such that the assumption \emph{(A)} is satisfied for both operators. If 
\[
\Lambda_{A^{(1)},q^{(1)}}(f)|_{\gamma}=\Lambda_{A^{(2)},q^{(2)}}(f)|_{\gamma}, 
\]
for any $f\in H^{3/2}(\p\Omega)$, $\supp(f)\subset \overline{\gamma}$, then $dA^{(1)}=dA^{(2)}$ and $q^{(1)}=q^{(2)}$ in $\Omega$. 

\end{thm}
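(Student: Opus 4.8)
The plan is to transplant the reflection construction of complex geometric optics (CGO) solutions developed in the proof of Theorem~\ref{thm_main_1} to the present half-space geometry, letting the inaccessible flat piece $\gamma_0\subset\{x_n=0\}$ play the role of the hyperplane $\Gamma_2$ of the slab. The first move is a gauge normalization. Using the gauge invariance \eqref{eq_gauge}, I would choose $\Psi_j\in C^{1,1}(\overline\Omega)$ with $\Psi_j|_{\partial\Omega}=0$ and $\partial_\nu\Psi_j=-A^{(j)}\cdot\nu$ on $\partial\Omega$, and replace $A^{(j)}$ by $A^{(j)}+\nabla\Psi_j$. Since $\Psi_j$ vanishes on $\partial\Omega$, the maps $\Lambda_{A^{(j)},q^{(j)}}$ and hence the hypothesis are unchanged, and since $dA^{(j)}$ is gauge invariant it suffices to prove the conclusion for the normalized potentials. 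After this step $A^{(j)}\cdot\nu=0$ on $\partial\Omega$; in particular the normal component $A^{(j)}_n$ vanishes on $\gamma_0$.

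Next I reflect across the hyperplane. Let $R(x)=x^\ast=(x',-x_n)$ and $\Omega^\ast=R\Omega$. I extend the coefficients by even reflection of the tangential components and of $q$ and by odd reflection of the normal component, i.e.\ $\tilde A^{(j)}_\ell(x)=A^{(j)}_\ell(x^\ast)$ for $\ell<n$, $\tilde A^{(j)}_n(x)=-A^{(j)}_n(x^\ast)$, $\tilde q^{(j)}(x)=q^{(j)}(x^\ast)$ for $x\in\Omega^\ast$, extended further to compactly supported functions on $\R^n$ with the same parities. Because $A^{(j)}_n=0$ on $\gamma_0$, the odd extension of the normal component is continuous across $\gamma_0$, so $\tilde A^{(j)}\in W^{1,\infty}$ --- exactly the merely Lipschitz magnetic potential anticipated in the introduction. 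These parities are chosen so that the operator is reflection covariant, $(R^\ast)^{-1}\mathcal{L}_{\tilde A^{(j)},\tilde q^{(j)}}R^\ast=\mathcal{L}_{\tilde A^{(j)},\tilde q^{(j)}}$ with $R^\ast w=w\circ R$, and likewise for the transpose $\mathcal{L}_{-\tilde A^{(j)},\tilde q^{(j)}}$.

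Using the CGO construction from the proof of Theorem~\ref{thm_main_1}, which is designed to tolerate Lipschitz $A$, I build $w_1=e^{\zeta_1\cdot x}(a_1+r_1)$ solving $\mathcal{L}_{\tilde A^{(1)},\tilde q^{(1)}}w_1=0$ and $w_2=e^{\zeta_2\cdot x}(a_2+r_2)$ solving the transpose $\mathcal{L}_{-\tilde A^{(2)},\tilde q^{(2)}}w_2=0$, with $\zeta_j\cdot\zeta_j=0$ and $\|r_j\|\to0$ as $|\zeta_j|\to\infty$. The antisymmetrizations $u_j=w_j-R^\ast w_j$ still solve the respective equations, by covariance, and vanish on $\{x_n=0\}$; thus $u_j|_{\gamma_0}=0$ and $\supp(u_j|_{\partial\Omega})\subset\overline\gamma$. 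Feeding these into the bilinear Alessandrini identity, so that $\int_\Omega(\mathcal{L}_{A^{(1)},q^{(1)}}-\mathcal{L}_{A^{(2)},q^{(2)}})u_1\cdot u_2\,dx$ is represented interiorly, gives
\[
\int_\Omega\Big[-2i(A^{(1)}-A^{(2)})\cdot\nabla u_1-i\,\nabla\!\cdot\!(A^{(1)}-A^{(2)})\,u_1+\big(|A^{(1)}|^2-|A^{(2)}|^2+q^{(1)}-q^{(2)}\big)u_1\Big]u_2\,dx=\mathcal{B},
\]
where $\mathcal{B}$ is an integral over $\partial\Omega$. On $\gamma$ the two Neumann data agree by the hypothesis and the residual magnetic term $i(A^{(2)}-A^{(1)})\cdot\nu$ vanishes by the gauge normalization; on $\gamma_0$ one has $u_2=0$. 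Hence $\mathcal{B}=0$ and the interior integral vanishes.

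Finally I recover the coefficients, which is where the main obstacle lies. Expanding $u_j=w_j-R^\ast w_j$ and substituting $x\mapsto x^\ast$ with the parities of the coefficient differences, the interior identity splits into a diagonal integral over $\tilde\Omega$ carrying the phase $\zeta_1+\zeta_2=i\xi$ and a cross integral carrying the phase $\zeta_1+\zeta_2^\ast=i\xi-2(\zeta_2)_n e_n$. The heart of the matter is to select the family $\zeta_1,\zeta_2$ so that simultaneously $\zeta_j\cdot\zeta_j=0$, the combination $\zeta_1+\zeta_2=i\xi$ reaches a prescribed $\xi$, the exponentially growing directions $\Re\zeta_j$ are tangential to $\{x_n=0\}$ (so that $(\zeta_2)_n$ is purely imaginary and the cross phase stays purely imaginary), and $|\Im(\zeta_2)_n|\to\infty$, which makes the cross phase oscillate ever faster in $x_n$ and forces the cross integral to vanish by Riemann--Lebesgue. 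For $n\ge3$ the tangential hyperplane has dimension $n-1\ge2$, so for any $\xi$ one can pick a tangential growing direction orthogonal to $\xi$ and route the remaining freedom into a large normal oscillation; this is the additional constraint on the complex frequency vectors flagged in the introduction. Letting $|\zeta_j|\to\infty$ then recovers, at the leading order in $|\zeta_j|$, the Fourier transforms of the components of $d\tilde A^{(1)}-d\tilde A^{(2)}$, giving $dA^{(1)}=dA^{(2)}$, and at the next order $\widehat{\tilde q^{(1)}-\tilde q^{(2)}}(\xi)=0$ for every $\xi$, whence $q^{(1)}=q^{(2)}$ in $\Omega$. I expect this frequency bookkeeping --- reconciling the null condition $\zeta_j\cdot\zeta_j=0$ with reaching all $\xi$ while sending the cross phase to infinity --- to be the principal difficulty, together with carrying the CGO construction and its remainder estimates through for the only Lipschitz reflected potential.
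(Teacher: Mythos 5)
Your overall strategy coincides with the paper's: gauge normalization so that $A^{(j)}\cdot\nu=0$ on $\p\Omega$, odd/even reflection across $\{x_n=0\}$ (legitimate precisely because the normalized $A^{(j)}_n$ vanishes on $\gamma_0$), antisymmetrized CGO solutions vanishing on $\gamma_0$, the integral identity \eqref{eq_iden_isak} whose boundary terms die because the Neumann data agree on $\gamma$ and $u_2=0$ on $\gamma_0$, and the frequency choice with $\Re\zeta_j$ tangential ($\mu^{(2)}_n=0$, $\mu^{(1)}_n\ne 0$) so that the cross terms acquire a rapidly oscillating purely imaginary phase and vanish by Riemann--Lebesgue. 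This is exactly how the paper runs the machinery of Theorem \ref{thm_main_2} and Proposition \ref{prop_curl_thm_2} in the half-space geometry to reach \eqref{eq_id_main_isak}.

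There is, however, a genuine gap in your recovery of the electric potential. What the first step yields (after removing the amplitude factors $e^{\Phi_1^{(0)}+\overline{\Phi_2^{(0)}}}$ via the holomorphic-function argument of Proposition \ref{prop_32}, which you also elide but which does transplant) is only $d\tilde A^{(1)}=d\tilde A^{(2)}$ in $\Omega\cup\Omega_0^*$. Your claim that ``at the next order'' one reads off $\widehat{q^{(1)}-q^{(2)}}(\xi)=0$ does not follow: the magnetic term $(A^{(1)}-A^{(2)})\cdot((Du_1)\overline{u_2}+u_1\overline{Du_2})$ in \eqref{eq_iden_isak} is of size $h^{-1}$ and does not disappear merely because $A^{(1)}-A^{(2)}$ is curl-free; it must be gauged away before the $\mathcal{O}(1)$ term can be isolated. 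Writing $A^{(1)}-A^{(2)}=\nabla\Psi$, the replacement $A^{(2)}\mapsto A^{(2)}+\nabla\Psi$ preserves the hypothesis $\Lambda_{A^{(1)},q^{(1)}}(f)|_{\gamma}=\Lambda_{A^{(2)},q^{(2)}}(f)|_{\gamma}$ for $\supp(f)\subset\overline\gamma$ only if $\Psi$ is constant (normalizable to zero) on $\overline{\gamma}$, and nothing in your argument controls $\Psi$ on the boundary; unlike in the slab theorems, the potentials here are not compactly supported inside $\Omega$, so the device used there is unavailable. This is precisely why the paper invokes the boundary determination result of \cite{Brown_Salo} to conclude $A^{(1)}=A^{(2)}$ on $\overline{\gamma}$, extends the reflected potentials to a large ball $B$ so that $\tilde A^{(1)}=\tilde A^{(2)}$ on $B\setminus(\Omega\cup\Omega_0^*)$, applies Proposition \ref{prop_curl_thm_2} on $B$ to get $\nabla\Psi=\tilde A^{(1)}-\tilde A^{(2)}$ with $\nabla\Psi=0$ outside $\Omega\cup\Omega_0^*$, and deduces that $\Psi$ is constant on the connected set $\p(\Omega\cup\Omega_0^*)$; only after this gauge reduction does the identity collapse to $\int_\Omega(q^{(1)}-q^{(2)})u_1\overline{u_2}\,dx=0$ and yield $q^{(1)}=q^{(2)}$. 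Relatedly, your early remark that the coefficients may be ``extended further to compactly supported functions on $\R^n$'' is harmless for constructing CGOs, but those two extensions need not agree outside $\Omega\cup\Omega_0^*$ and so cannot be used in the recovery step until the boundary identification of $A$ on $\overline{\gamma}$ has been established.
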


The plan of the paper is as follows. In Section 2, we review the construction of complex geometric optics solutions for the magnetic Schr\"odinger operator with a Lipschitz continuous magnetic potential, following \cite{Knu_Salo_2007}. 
Section 3 is devoted to the proof of Theorem \ref{thm_main_1}, while the proof of Theorem \ref{thm_main_2} is given in Section 4. Theorems \ref{thm_AU} and \ref{thm_Isak}, concerned with the case of bounded domains, are established in Section 5.  Appendix A describes the construction of admissible solutions to the Dirichlet problem \eqref{eq_Dirichlet_problem} 
 in an infinite slab, considered in the main part of the paper.

\section{Complex geometric optics solutions}

When proving Theorems \ref{thm_main_1} and  \ref{thm_main_2}, we shall employ a reflection argument across the boundary hyperplanes, which will lead to the magnetic potentials which are Lipschitz continuous on the extended domain.  To this end, we shall start by recalling a construction of  complex geometric optics solutions for the magnetic Schr\"odinger operator under these limited regularity assumptions. Here we follow the works \cite{DKSU_2007} and particularly, \cite{Knu_Salo_2007}.

Let $\Omega\subset \R^n$, $n\ge 3$, be a bounded domain with $C^\infty$-boundary.  Consider the magnetic Schr\"odinger equation,
\begin{equation}
\label{eq_Sch_lip}
\mathcal{L}_{A,q}u=0\quad \textrm{in}\quad \Omega, 
\end{equation}
where $A\in W^{1,\infty}(\Omega,\C^n)$ and $q\in L^\infty(\Omega,\C)$.  Following \cite{Knu_Salo_2007}, we recall the construction of complex geometric optics solutions
\begin{equation}
\label{eq_cgo_lip}
u(x,\zeta;h)=e^{x\cdot\zeta/h}(a(x,\zeta;h)+r(x,\zeta;h))
\end{equation}
of \eqref{eq_Sch_lip},  which is based on  Carleman estimates and a smoothing argument.  Here $\zeta\in \C^n$, $\zeta\cdot\zeta=0$, $|\zeta|\sim 1$,  $a$ is a smooth amplitude,  $r$ is a correction term, and $h>0$ is a small parameter. 

To deal with the magnetic potential $A\in W^{1,\infty}(\Omega, \C^n)$,  we extend $A$ to a Lipschitz vector field, compactly supported  in $\tilde \Omega$, where $\tilde \Omega\subset \R^n$ is an open bounded set such that $\Omega\subset\subset \tilde \Omega$. We consider the mollification $A^\sharp=A*\varphi_\varepsilon\in C_0^\infty(\tilde \Omega,\C^n)$. Here $\varepsilon>0$ is small and 
$\varphi_\varepsilon(x)=\varepsilon^{-n}\varphi(x/\varepsilon)$ is the usual mollifier with $\varphi\in C^\infty_0(\R^n)$, $0\le \varphi\le 1$, and 
$\int \varphi dx=1$.  We write 
$A^\flat=A-A^\sharp$. We have the following estimates
\begin{equation}
\label{eq_flat_est}
\|A^\flat\|_{L^\infty}=\mathcal{O}(\varepsilon),
\end{equation}
\[
 \|\p^\alpha A^\sharp\|_{L^\infty}=\mathcal{O}(\varepsilon^{-|\alpha|}) \quad \textrm{for all}\quad \alpha,
\]
as $\varepsilon\to 0$. 

In this paper we shall work with $\zeta$ depending slightly on $h$, i.e. $\zeta=\zeta^{(0)}+\zeta^{(1)}$ with $\zeta^{(0)}$ being independent of $h$ and $\zeta^{(1)}=\mathcal{O}(h)$ as $h\to 0$.  Consider the conjugated operator
\begin{align*}
e^{-x\cdot\zeta/h}h^2\mathcal{L}_{A,q} e^{x\cdot\zeta/h}=&-h^2\Delta +2(-i\zeta^{(0)}+hA)\cdot hD-2i\zeta^{(1)}\cdot hD+h^2A^2\\
&-2ih\zeta^{(0)}\cdot (A^\sharp+A^\flat)-2ih\zeta^{(1)}\cdot A 
+h^2(D\cdot A)+h^2q. 
\end{align*}
Then in order that \eqref{eq_cgo_lip} be a solution  of \eqref{eq_Sch_lip}, we need to have
\begin{equation}
\label{eq_transport_Lip}
\zeta^{(0)}\cdot D a+\zeta^{(0)}\cdot A^\sharp a=0 \quad\textrm{in}\quad \Omega,
\end{equation}
\begin{equation}
\label{eq_r}
e^{-x\cdot\zeta/h}h^2\mathcal{L}_{A,q} e^{x\cdot\zeta/h}r=-h^2\mathcal{L}_{A,q}a +2ih\zeta^{(0)}\cdot A^\flat a+2i\zeta^{(1)}\cdot hDa+2ih\zeta^{(1)}\cdot A a\quad\textrm{in}\quad \Omega.
\end{equation}
The equation \eqref{eq_transport_Lip} is the first transport equation and it follows from \cite[Lemma 6.1]{Knu_Salo_2007} that it has a solution $a\in C^\infty(\overline{\Omega})$ which satisfies
\begin{equation}
\label{eq_ampl_est}
\|\p^\alpha a\|_{L^\infty(\Omega)}\le C_\alpha \varepsilon^{-|\alpha|}\quad \textrm{for all}\quad \alpha. 
\end{equation}
The estimate \eqref{eq_ampl_est} follows from the explicit formula for the solution $a=e^{\Phi}$, where $\Phi\in C^\infty(\overline{\Omega})$ is given by
\begin{equation}
\label{eq_expl_Phi}
\begin{aligned}
&\Phi(x,\zeta^0;h)\\
&=\frac{-i}{2\pi}\int_{\R^2}\frac{\zeta^{(0)}\cdot A^\sharp(x-y_1\Re\zeta^{(0)}-y_2\Im\zeta^{(0)})\chi(x-y_1\Re\zeta^{(0)}-y_2\Im\zeta^{(0)})}{y_1+iy_2}dy_1dy_2,
\end{aligned}
\end{equation}
where $\chi\in C^\infty_0(\tilde \Omega)$ is such that $\chi=1$ near $\overline{\Omega}$. 

Using \eqref{eq_ampl_est}, \eqref{eq_flat_est}, and the fact that $\zeta^{(1)}=\mathcal{O}(h)$, for the right hand side of \eqref{eq_r}, we have the following estimate,
\[
\|-h^2\mathcal{L}_{A,q}a +2ih\zeta\cdot A^\flat a+2i\zeta^{(1)}\cdot hDa+2ih\zeta^{(1)}\cdot A a\|_{L^\infty(\Omega)}\le \mathcal{O}(h^2\varepsilon^{-2}+h\varepsilon).
\]
It follows from \cite[Proposition 4.3]{Knu_Salo_2007}  that for $h$ small enough, there is a solution $r\in H^1(\Omega)$ of \eqref{eq_r}, which satisfies $\|r\|_{H^1_{\textrm{scl}}(\Omega)}=\mathcal{O}(h\varepsilon^{-2}+\varepsilon)$.  Here $\|r\|_{H^1_{\textrm{scl}}(\Omega)}=\|r\|_{L^2(\Omega)}+\|h\nabla r\|_{L^2(\Omega)}$.
The optimal choice of $\varepsilon$ is given by $\varepsilon=h^{1/3}$.  
We have therefore the following result, see \cite[Proposition 4.3]{Knu_Salo_2007}.

\begin{prop}
\label{prop_CGO_Lip}
Let $A\in W^{1,\infty}(\Omega,\C^n)$ and $q\in L^\infty(\Omega,\C)$. Then for $h>0$ small enough, there is a solution $u\in H^1(\Omega)$, given by \eqref{eq_cgo_lip}, of the equation \eqref{eq_Sch_lip}, where  $a\in C^\infty(\overline{\Omega})$ solves the transport equation \eqref{eq_transport_Lip}, and satisfies the estimate 
$\|\p^\alpha a\|_{L^\infty(\Omega)}\le C_\alpha h^{-|\alpha|/3}$, and $\|r\|_{H^1_{\emph{\textrm{scl}}}(\Omega)}=\mathcal{O}(h^{1/3})$. 
\end{prop}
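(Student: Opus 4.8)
The plan is to verify that the ansatz \eqref{eq_cgo_lip} solves \eqref{eq_Sch_lip} by reducing the equation, after conjugation by $e^{x\cdot\zeta/h}$, to a transport equation for the amplitude $a$ and an inhomogeneous equation for the correction $r$, and then to estimate each piece. First I would use the conjugated operator $e^{-x\cdot\zeta/h}h^2\mathcal{L}_{A,q}e^{x\cdot\zeta/h}$ computed above and observe that, since $\zeta^{(0)}\cdot\zeta^{(0)}=0$ and $\zeta^{(1)}=\mathcal{O}(h)$, the only $h^0$-term carrying a first-order derivative is $2(-i\zeta^{(0)}+hA)\cdot hD$. Collecting the terms that are not to be absorbed into the $r$-equation gives precisely the first transport equation \eqref{eq_transport_Lip}, $\zeta^{(0)}\cdot Da+\zeta^{(0)}\cdot A^\sharp a=0$, in which the mollified potential $A^\sharp$ appears in place of $A$ so that the amplitude remains smooth, all the genuinely low-regularity and lower-order contributions being shifted to the right-hand side of \eqref{eq_r}.

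Next I would solve \eqref{eq_transport_Lip}. Because $\zeta^{(0)}$ is a complex null vector (its real and imaginary parts are orthogonal and of equal length), the operator $\zeta^{(0)}\cdot D$ is, up to normalization, a Cauchy--Riemann operator in the two real variables spanning $\Re\zeta^{(0)}$ and $\Im\zeta^{(0)}$, so the equation is solved explicitly by a Cauchy transform; this is \cite[Lemma 6.1]{Knu_Salo_2007}, yielding $a=e^{\Phi}$ with $\Phi$ given by \eqref{eq_expl_Phi}, which is smooth on $\overline{\Omega}$ since $A^\sharp\in C_0^\infty$. Differentiating \eqref{eq_expl_Phi} and using the mollifier bound $\|\partial^\alpha A^\sharp\|_{L^\infty}=\mathcal{O}(\varepsilon^{-|\alpha|})$, each derivative landing on $A^\sharp$ costs a factor $\varepsilon^{-1}$, which produces the amplitude estimate \eqref{eq_ampl_est}, $\|\partial^\alpha a\|_{L^\infty(\Omega)}\le C_\alpha\varepsilon^{-|\alpha|}$.

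The main work is the correction equation \eqref{eq_r}. Here I would invoke the semiclassical Carleman-estimate solvability result \cite[Proposition 4.3]{Knu_Salo_2007}, which provides, for $h$ small, a solution $r\in H^1(\Omega)$ of \eqref{eq_r} with $\|r\|_{H^1_{\textrm{scl}}(\Omega)}\le C h^{-1}\|F\|_{L^2(\Omega)}$, where $F$ denotes the right-hand side of \eqref{eq_r} and the gain of one power of $h$ is the content of the estimate. The obstacle is to control $F$ in the high-frequency limit: using \eqref{eq_ampl_est}, the bound \eqref{eq_flat_est} $\|A^\flat\|_{L^\infty}=\mathcal{O}(\varepsilon)$, and $\zeta^{(1)}=\mathcal{O}(h)$, the contributions $-h^2\mathcal{L}_{A,q}a$, $2ih\zeta^{(0)}\cdot A^\flat a$ and the $\zeta^{(1)}$-terms are each bounded by $\mathcal{O}(h^2\varepsilon^{-2}+h\varepsilon)$ in $L^\infty(\Omega)$, hence in $L^2$. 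Dividing by $h$ gives $\|r\|_{H^1_{\textrm{scl}}(\Omega)}=\mathcal{O}(h\varepsilon^{-2}+\varepsilon)$.

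Finally I would optimize over the mollification scale. The bound $h\varepsilon^{-2}+\varepsilon$ is minimized, up to constants, by balancing $h\varepsilon^{-2}\sim\varepsilon$, i.e. $\varepsilon=h^{1/3}$, which yields $\|r\|_{H^1_{\textrm{scl}}(\Omega)}=\mathcal{O}(h^{1/3})$ and turns \eqref{eq_ampl_est} into $\|\partial^\alpha a\|_{L^\infty(\Omega)}\le C_\alpha h^{-|\alpha|/3}$. The genuinely hard point, which I would cite rather than reprove, is \cite[Proposition 4.3]{Knu_Salo_2007}: the semiclassical Carleman estimate with the $h$-gain for a first-order magnetic perturbation under only Lipschitz regularity, together with the resulting $H^1$-solvability. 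Everything else is bookkeeping of the two competing error scales, $h^2\varepsilon^{-2}$ coming from derivatives of the mollified amplitude and $h\varepsilon$ coming from the unmollified part $A^\flat$, whose balance is exactly what forces the choice $\varepsilon=h^{1/3}$.
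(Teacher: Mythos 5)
Your proposal is correct and follows essentially the same route as the paper: the same mollification splitting $A=A^\sharp+A^\flat$, the same transport equation \eqref{eq_transport_Lip} solved via the Cauchy transform of \cite[Lemma 6.1]{Knu_Salo_2007} giving $a=e^{\Phi}$ with the estimate \eqref{eq_ampl_est}, the same appeal to the Carleman-estimate solvability \cite[Proposition 4.3]{Knu_Salo_2007} for \eqref{eq_r}, and the same balancing $h\varepsilon^{-2}\sim\varepsilon$ leading to $\varepsilon=h^{1/3}$. The bookkeeping of the error terms $\mathcal{O}(h^2\varepsilon^{-2}+h\varepsilon)$ and the resulting bounds $\|r\|_{H^1_{\textrm{scl}}(\Omega)}=\mathcal{O}(h^{1/3})$, $\|\p^\alpha a\|_{L^\infty(\Omega)}\le C_\alpha h^{-|\alpha|/3}$ match the paper exactly.
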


\begin{rem}
\label{rem_com_geom_1} In what follows, we shall need complex geometric optics solutions belonging to $H^{2}(\Omega)$. To obtain such solutions, let $\Omega'\supset\supset\Omega$ be a bounded domain with smooth boundary,  and let us extend $A\in W^{1,\infty}(\Omega,\C^n)$ and $q\in L^\infty(\Omega)$ to $W^{1,\infty}(\Omega',\C^n)$ and $L^\infty(\Omega')$-functions, respectively. By elliptic regularity, the complex geometric optics solutions, constructed on $\Omega'$, according to Proposition \emph{\ref{prop_CGO_Lip}},   belong to  $H^{2}(\Omega)$.

\end{rem}

\begin{rem}
\label{rem_CGO_Lip}
Using \eqref{eq_expl_Phi} and \eqref{eq_flat_est}, we see that 
\[
\|\Phi(h)- \Phi^{(0)}\|_{L^\infty(\Omega)}\to 0, \quad h\to 0,
\]
where $\Phi^{(0)}$ solves the equation
\[
\zeta^{(0)}\cdot \nabla \Phi^{(0)}+i\zeta^{(0)}\cdot A=0\quad \textrm{in}\quad \Omega. 
\]
\end{rem}

In what follows, we shall use the standard notation,
\[
(u,v)_{L^2(\Omega)}=\int_{\Omega}u(x)\overline{v(x)}dx,\quad (u,v)_{L^2(\p \Omega)}=\int_{\p \Omega}u(x)\overline{v(x)}dS,
\]
where $dS$ is the surface measure on the boundary of $\Omega$. 

We recall finally the Green formula for the magnetic Schr\"odinger operator $\mathcal{L}_{A,q}$ on a bounded domain $\Omega\subset \R^n$ with $C^\infty$ smooth boundary, see \cite{DKSU_2007},  
\begin{equation}
\label{eq_Green}
(\mathcal{L}_{A,q}u,v)_{L^2(\Omega)}-(u,\mathcal{L}_{\overline{A},\overline{q}}v)_{L^2(\Omega)}=(u,(\p_\nu+i\nu\cdot\overline{A})v)_{L^2(\p \Omega)}
-((\p_\nu+i\nu\cdot A)u,v)_{L^2(\p \Omega)},
\end{equation}
which is valid for all $u,v\in H^2(\Omega)$.

\section{Proof of Theorem \ref{thm_main_1}}

Assume that  $k\ge 0$ is admissible  for the operator $\mathcal{L}_{A^{(j)},q^{(j)}}$ and its real transpose  $\mathcal{L}_{-A^{(j)},q^{(j)}}$, $j=1,2$. 
Let $u_1\in H^2_{\textrm{loc}}(\overline{\Sigma})$ be the admissible solution to the Dirichlet problem,
\begin{equation}
\label{eq_u_1}
\begin{aligned}
(\mathcal{L}_{A^{(1)},q^{(1)}}(x,D)-k^2)u_1(x)&=0\quad \textrm{in}\quad \Sigma,\\
u_1&=f\quad\textrm{on}\quad \Gamma_1,\\
u_1&=0\quad\textrm{on}\quad \Gamma_2,
\end{aligned}
\end{equation}
for  $f\in H^{3/2}(\Gamma_1)$ such that $\supp(f)\subset \gamma_1$.  Here the existence and uniqueness of an admissible solution is guaranteed by the results of Appendix A.   Let also $v\in H^2_{\textrm{loc}}(\overline{\Sigma})$ be the admissible solution of the following problem,
\begin{align*}
(\mathcal{L}_{A^{(2)},q^{(2)}}(x,D)-k^2)v(x)&=0\quad \textrm{in}\quad \Sigma,\\
v&=u_1\quad\textrm{on}\quad \Gamma_1\cup \Gamma_2.
\end{align*}
Setting $w=v-u_1$, we get
\begin{equation}
\label{eq_w}
\begin{aligned}
(\mathcal{L}_{A^{(2)},q^{(2)}}(x,D)-k^2)w&=(A^{(1)}-A^{(2)})\cdot D u_1+D\cdot ((A^{(1)}-A^{(2)})u_1)\\
&+((A^{(1)})^2-(A^{(2)})^2+q^{(1)}-q^{(2)})u_1\quad \textrm{in }\Sigma.
\end{aligned}
\end{equation}
It follows from \eqref{eq_DN_maps_coincide} that
\[
(\p_{\nu}+iA^{(1)}\cdot\nu) u_1|_{\gamma_2}=(\p_\nu+i A^{(2)}\cdot\nu) v|_{\gamma_2},
\]
and therefore, $\p_{\nu}w=0$ on $\gamma_2$, since $u_1=v=0$ on $\Gamma_2$. 
We denote  
\[
l_1:=\Gamma_1\cap \overline{B}\subset \gamma_1, \quad l_2:=\Gamma_2\cap \overline{B}\subset \gamma_2,\quad l_3:=\p B\cap \Sigma. 
\]
It follows from \eqref{eq_w} that $w\in H^2_{\textrm{loc}}(\overline{\Sigma})$ is a solution to
\[
(-\Delta-k^2)w=0 \quad \textrm{in}\quad \Sigma\setminus{\overline{B}}.
\]
As $w=\p_\nu w=0$ on $\gamma_2\setminus{\overline{l_2}}$, by unique continuation, $w=0$ in $\Sigma\setminus{\overline{B}}$.   
Therefore,  $w=\p_\nu w=0$ on $l_3$.  

Let $u_2\in H^2(\Sigma\cap B)$ be a solution of the equation 
\begin{equation}
\label{eq_u_2}
(\mathcal{L}_{\overline{A^{(2)}},\overline{q^{(2)}}}(x,D)-k^2)u_2=0\quad \textrm{in}\quad \Sigma\cap B,
\end{equation}
such that 
\begin{equation}
\label{eq_u_2_zero}
u_2=0\quad \textrm{on}\quad l_1.
\end{equation}
Then by the Green formula \eqref{eq_Green}, we have
\begin{equation}
\label{eq_cons_green}
\begin{aligned}
((\mathcal{L}_{A^{(2)},q^{(2)}}-k^2)w,u_2)_{L^2(\Sigma\cap B)}=(w,(\mathcal{L}_{\overline{A^{(2)}},\overline{q^{(2)}}}-k^2)u_2)_{L^2(\Sigma\cap B)}\\
+
(w,(\p_\nu+i\nu\cdot\overline{A^{(2)}})u_2)_{L^2(\p (\Sigma\cap B))}-
((\p_{\nu}+i\nu \cdot A^{(2)})w,u_2)_{L^2(\p (\Sigma\cap B))}.
\end{aligned}
\end{equation}
Recall that $\p (\Sigma\cap B)=l_1\cup l_2\cup l_3$, $w=0$ on $\p (\Sigma\cap B)$ and $\p_\nu w=0$ on $l_2\cup l_3$. 
Thus, \eqref{eq_u_2}, \eqref{eq_u_2_zero} and \eqref{eq_cons_green} imply that 
\begin{equation}
\label{eq_cons_green_2}
((\mathcal{L}_{A^{(2)},q^{(2)}}-k^2)w,u_2)_{L^2(\Sigma\cap B)}=0. 
\end{equation}
Using \eqref{eq_w} and \eqref{eq_cons_green_2}, we get 
\begin{equation}
\label{eq_cons_green_3}
\begin{aligned}
\int_{\Sigma\cap B} (A^{(1)}-A^{(2)})\cdot ((Du_1)\overline{u_2}+u_1\overline{Du_2})dx-i \int_{\p (\Sigma\cap B)}(A^{(1)}-A^{(2)})\cdot\nu u_1\overline{u_2}dS\\
+\int_{\Sigma\cap B} ((A^{(1)})^2-(A^{(2)})^2+q^{(1)}-q^{(2)})u_1\overline{u_2}dx=0.
\end{aligned}
\end{equation}

We may assume without loss of generality that the normal components of $A^{(1)}$ and $A^{(2)}$ are equal to zero on $\Gamma_1\cup\Gamma_2$, i.e.,
\begin{equation}
\label{eq_normal_component_zero}
A^{(1)}\cdot \nu=A^{(2)}\cdot \nu=0\quad \textrm{on}\quad  \Gamma_1\cup\Gamma_2.
\end{equation}
Indeed, it follows from \eqref{eq_gauge} that for $A^{(j)}$,  we can  determine $\Psi^{(j)}\in C^{1,1}(\overline{\Sigma})$ with compact support such that $\Psi^{(j)}|_{\Gamma_1\cup\Gamma_2}=0$ and $\p_\nu \Psi^{(j)}=-A^{(j)}\cdot \nu$ on $\Gamma_1\cup\Gamma_2$, and replace $A^{(j)}$ by $A^{(j)}+\nabla \Psi^{(j)}$. For the existence of such $\Psi^{(j)}\in C^{1,1}(\overline{\Sigma})$, we refer to \cite[Theorem 1.3.3]{Horm_book_1}.

Moreover, by the choice of the set $B$, we have $A^{(1)}=A^{(2)}=0$ on $l_3$.  Thus,
\[
\int_{\p (\Sigma\cap B)}(A^{(1)}-A^{(2)})\cdot\nu u_1\overline{u_2}dS=0,
\]
and therefore, \eqref{eq_cons_green_3} implies that
\begin{equation}
\label{eq_identity_main}
\begin{aligned}
\int_{\Sigma\cap B} (A^{(1)}-A^{(2)})\cdot ((Du_1)\overline{u_2}+u_1\overline{Du_2})dx\\
+\int_{\Sigma\cap B} ((A^{(1)})^2-(A^{(2)})^2+q^{(1)}-q^{(2)})u_1\overline{u_2}dx
=0,
\end{aligned}
\end{equation}
for any $u_1\in W(\Sigma)$ and any $u_2\in V_{l_1}(\Sigma\cap B)$. Here
\begin{align*}
W(\Sigma)=\{&u\in H^2_{\textrm{loc}}(\overline{\Sigma}):(\mathcal{L}_{A^{(1)},q^{(1)}}-k^2)u=0\ \textrm{in}\ \Sigma,\ u|_{\Gamma_2}=0,\ \supp(u|_{\Gamma_1})\subset\gamma_1,\\
&u \textrm{ is admissible in the sense of Appendix A}  \}, 
\end{align*}
\begin{align*}
V_{l_j}(\Sigma\cap  B)=\{&u\in H^2(\Sigma\cap  B): (\mathcal{L}_{\overline{A^{(2)}},\overline{q^{(2)}}}-k^2)u=0\ \textrm{in}\ \Sigma\cap  B,\ u|_{l_j}=0\},
\end{align*}
$j=1,2$.

We would like to replace $u_1$ in \eqref{eq_identity_main} by an element of the space $W_{l_2}(\Sigma\cap B)$, where 
\begin{align*}
W_{l_2}(\Sigma\cap  B)=\{&u\in H^2(\Sigma\cap  B): (\mathcal{L}_{A^{(1)},q^{(1)}}-k^2)u=0\ \textrm{in}\ \Sigma\cap  B,\ u|_{l_2}=0\}.
\end{align*}
 To this end, as in \cite{Ammari_Uhlmann_2004, Isakov_2007,  LiUhl2010}, we need the following Runge type approximation result. 

\begin{prop}
\label{prop_Runge}
The space $W(\Sigma)$ is dense in $W_{l_2}(\Sigma\cap  B)$ in $L^2(\Sigma\cap B)$-topology.
\end{prop}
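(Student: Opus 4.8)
The plan is to prove the statement by a Runge-type duality argument, as in \cite{Ammari_Uhlmann_2004, Isakov_2007, LiUhl2010}, the new feature being that the auxiliary problem must now be solved in the \emph{unbounded} slab. Since $W_{l_2}(\Sigma\cap B)$ is a closed subspace of $L^2(\Sigma\cap B)$ containing the restrictions to $\Sigma\cap B$ of the elements of $W(\Sigma)$, by the Hahn--Banach theorem it suffices to show that any $f\in L^2(\Sigma\cap B)$ with
\begin{equation*}
\int_{\Sigma\cap B}u\,\overline{f}\,dx=0\quad\text{for all }u\in W(\Sigma)
\end{equation*}
also satisfies $\int_{\Sigma\cap B}w\,\overline{f}\,dx=0$ for all $w\in W_{l_2}(\Sigma\cap B)$.

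Given such an $f$, I would first extend $\overline{f}$ by zero to a compactly supported element of $L^2(\Sigma)$ and solve the transpose problem
\begin{equation*}
(\mathcal{L}_{-A^{(1)},q^{(1)}}-k^2)\phi=\overline{f}\quad\text{in }\Sigma,\qquad \phi|_{\Gamma_1\cup\Gamma_2}=0,
\end{equation*}
seeking an admissible solution $\phi\in H^2_{\textrm{loc}}(\overline{\Sigma})$. This is exactly where the hypothesis that $k$ be admissible for the real transpose $\mathcal{L}_{-A^{(1)},q^{(1)}}$ enters: by the results of Appendix A it provides both the existence and uniqueness of such a $\phi$ and a Green formula for admissible solutions on $\Sigma$ in which the contributions at infinity vanish.

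Next I would insert $u\in W(\Sigma)$ and $\phi$ into the bilinear Green identity pairing $\mathcal{L}_{A^{(1)},q^{(1)}}$ with its transpose $\mathcal{L}_{-A^{(1)},q^{(1)}}$ (the analogue of \eqref{eq_Green} with the pairing $\int u\phi\,dx$). Since $(\mathcal{L}_{A^{(1)},q^{(1)}}-k^2)u=0$, the bulk part collapses to $-\int_{\Sigma\cap B}u\,\overline{f}\,dx=0$, so the whole boundary contribution over $\Gamma_1\cup\Gamma_2$ must vanish. As $\phi=0$ on $\Gamma_1\cup\Gamma_2$ and $u=0$ on $\Gamma_2$, the only surviving term is $\int_{\Gamma_1}(u|_{\Gamma_1})\,\partial_\nu\phi\,dS$; letting the traces $u|_{\Gamma_1}$ range over all of $\{g\in H^{3/2}(\Gamma_1):\supp g\subset\gamma_1\}$, which is dense in $L^2(\gamma_1)$, I conclude $\partial_\nu\phi=0$ on $\gamma_1$, and hence $\phi=\partial_\nu\phi=0$ on the open piece $\gamma_1\setminus\overline{l_1}\subset\Gamma_1\setminus\overline{B}$. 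Since the coefficients vanish outside $B$, $\phi$ solves $(-\Delta-k^2)\phi=0$ in $\Sigma\setminus\overline{B}$, which is connected for $n\ge3$; unique continuation from this Cauchy data (exactly as in the argument already used for $w$ above) forces $\phi=0$ in $\Sigma\setminus\overline{B}$, so $\phi=\partial_\nu\phi=0$ on $l_1\cup l_3$. Applying the same bilinear Green identity on the bounded domain $\Sigma\cap B$ to $w\in W_{l_2}(\Sigma\cap B)$ and $\phi$, the bulk part equals $-\int_{\Sigma\cap B}w\,\overline{f}\,dx$, while the boundary integral over $\partial(\Sigma\cap B)=l_1\cup l_2\cup l_3$ vanishes because $\phi=\partial_\nu\phi=0$ on $l_1\cup l_3$ and both $w$ and $\phi$ vanish on $l_2$. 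Thus $\int_{\Sigma\cap B}w\,\overline{f}\,dx=0$, completing the duality argument.

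The \textbf{main obstacle}, absent in the bounded-domain versions of such a lemma, is the solvability of the transpose \emph{source} problem in the unbounded slab together with the justification that the Green identity holds on $\Sigma$ with no boundary contribution from infinity; this is precisely what admissibility of $k$ for $\mathcal{L}_{-A^{(1)},q^{(1)}}$ and the framework of Appendix A are meant to supply. A secondary point requiring care is the density of the admissible Dirichlet traces $u|_{\Gamma_1}$ in $L^2(\gamma_1)$ and the boundary unique continuation across the flat piece $\gamma_1\setminus\overline{l_1}$ of $\Gamma_1$.
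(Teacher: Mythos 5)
Your proposal is correct and follows essentially the same route as the paper's proof: Hahn--Banach duality, solving the transpose source problem in the slab via the admissible-solution framework of Appendix A, the Green formula on $\Sigma$ (Proposition \ref{prop_Greens_app}) to deduce $\p_\nu\phi=0$ on $\gamma_1$, unique continuation in $\Sigma\setminus\overline{B}$, and a final Green identity on the bounded piece $\Sigma\cap B$. The only (immaterial) difference is that you phrase everything with the bilinear pairing against the real transpose $\mathcal{L}_{-A^{(1)},q^{(1)}}$, whereas the paper conjugates and works with the adjoint $\mathcal{L}_{\overline{A^{(1)}},\overline{q^{(1)}}}$ in the sesquilinear pairing.
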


\begin{proof}
By the Hahn-Banach theorem, we need to show that  for any $g\in L^2(\Sigma\cap B)$ such that
\[
\int_{\Sigma\cap B} g\overline udx=0\quad \textrm{for any } u\in W(\Sigma),
\]
we have
\[
\int_{\Sigma\cap B} g\overline{v}dx=0\quad \textrm{for any } v\in W_{l_2}(\Sigma\cap B). 
\]
Let us extend  $g$ by zero to the complement of $\Sigma\cap B$ in $\Sigma$.  Let $\overline{U}\in H^2_{\textrm{loc}}(\overline{\Sigma})$ be the admissible solution of the problem in the sense of Definition 
\ref{defn_add_sol_app_2},
\begin{align*}
(\mathcal{L}_{-A^{(1)},q^{(1)}}-k^2)\overline{U}&=\overline{g}\quad \textrm{in}\quad \Sigma,\\
\overline{U}&=0\quad\textrm{on}\quad \Gamma_1\cup\Gamma_2.
\end{align*}
Then $U$ solves the equation $(\mathcal{L}_{\overline{A^{(1)}},\overline{q^{(1)}}}-k^2)U=g$ in $\Sigma$. 
For any $u\in W(\Sigma)$, using the Green formula in the infinite slab $\Sigma$, see Proposition \ref{prop_Greens_app}, 
 we have
\begin{align*}
0=\int_{\Sigma} g\overline udx=\int_{\Sigma}[(\mathcal{L}_{\overline{A^{(1)}},\overline{q^{(1)}}}-k^2)U]\overline{u}dx=-\int_{\Gamma_1}\p_\nu U\overline{u}dS.
\end{align*}
Since $u|_{\Gamma_1}$ can be an arbitrary smooth function, supported in  $\gamma_1$, we conclude that $\p_\nu U|_{\gamma_1}=0$. 
Hence, $U$ satisfies the equation 
$(-\Delta-k^2)U=0$ in $\Sigma\setminus B$, and moreover, $U=\p_\nu U=0$ on $\gamma_1\setminus  l_1$. 
Thus, by  unique continuation, $U=0$ in $\Sigma\setminus B$, and  we have $U=\p_\nu U=0$ on $ l_3$.

For any $v\in W_{l_2}(\Sigma\cap B)$, using the Green formula on the bounded domain $\Sigma\cap B$, we have
\begin{align*}
\int_{\Sigma\cap B} g\overline{v}dx=\int_{\Sigma\cap  B}[(\mathcal{L}_{\overline{A^{(1)}},\overline{q^{(1)}}}-k^2)U]\overline{v}dx
=\int_{\Sigma\cap B} U\overline{(\mathcal{L}_{A^{(1)},q^{(1)}}-k^2) v}dx\\
+ \int_{\p (\Sigma\cap  B )}U\overline{(\p_\nu +i\nu\cdot A^{(1)})v}dS-\int_{\p (\Sigma\cap B)}(\p_\nu +i \nu\cdot\overline{A^{(1)}})U\overline{v}dS=0.
\end{align*}
The claim follows. 
\end{proof}

Since $(A^{(1)}-A^{(2)})\cdot \nu=0$ on $\p (\Sigma\cap B)$, we can rewrite \eqref{eq_identity_main} in the following form, 
\begin{align*}
-\int_{\Sigma\cap B} u_1D\cdot((A^{(1)}-A^{(2)})\overline{u_2})dx
+\int_{\Sigma\cap B} (A^{(1)}-A^{(2)})\cdot (u_1\overline{Du_2})dx\\
+\int_{\Sigma\cap B} ((A^{(1)})^2-(A^{(2)})^2+q^{(1)}-q^{(2)})u_1\overline{u_2}dx
=0.
\end{align*}
Hence, an application of  Proposition \ref{prop_Runge} implies that \eqref{eq_identity_main} is valid for any $u_1\in W_{l_2}(\Sigma\cap  B)$ and $u_2\in V_{l_1}(\Sigma\cap B)$.

The next step is to construct complex geometric optics solutions, belonging to the spaces $W_{l_2}(\Sigma\cap  B)$ and $V_{l_1}(\Sigma\cap B)$. Let $\xi,\mu^{(1)},\mu^{(2)}\in\R^n$ be such that $|\mu^{(1)}|=|\mu^{(2)}|=1$ and $\mu^{(1)}\cdot\mu^{(2)}=\mu^{(1)}\cdot\xi=\mu^{(2)}\cdot\xi=0$. 
Similarly to \cite{Sun_1993}, we set 
\begin{equation}
\label{eq_zeta_1_2}
\zeta_1=\frac{ih\xi}{2}+i\sqrt{1-h^2\frac{|\xi|^2}{4}}\mu^{(1)}+\mu^{(2)}, \quad 
\zeta_2=-\frac{ih\xi}{2}+i\sqrt{1-h^2\frac{|\xi|^2}{4}}\mu^{(1)}-\mu^{(2)},
\end{equation}
so that $\zeta_j\cdot\zeta_j=0$, $j=1,2$, and $(\zeta_1+\overline{\zeta_2})/h=i\xi$. Here $h>0$ is a small enough semiclassical parameter.  Moreover, $\zeta_1= i\mu^{(1)}+\mu^{(2)}+\mathcal{O}(h)$ and $\zeta_2= i\mu^{(1)}-\mu^{(2)}+\mathcal{O}(h)$ as $h\to 0$. 

For $u_1$, we need to require that $u_1|_{l_2}=0$.  In order to fulfill this condition, we reflect $\Sigma\cap B$ with respect to the plane $x_n=0$ and denote this reflection by 
$(\Sigma\cap B)_0^*=\{(x',-x_n):x=(x',x_n)\in\Sigma\cap  B\}$. Here $x'=(x_1,\dots,x_{n-1})$.  We also extend the coefficients  $A^{(1)}$ and $q^{(1)}$ to  $(\Sigma\cap B)_0^*$.  For $A^{(1)}_j$, $j=1,\dots,n-1$, and $q^{(1)}$, we do the even extension, and  for $A^{(1)}_n$, we do the odd extension, i.e.,  we set 
\begin{align*}
\tilde A^{(1)}_j(x)&=\begin{cases} A^{(1)}_j(x',x_n),& 0<x_n<L,\\
 A^{(1)}_j(x',-x_n),& -L<x_n<0,
\end{cases},\quad j=1,\dots, n-1,\\
\tilde A^{(1)}_n(x)&=\begin{cases} A^{(1)}_n(x',x_n),& 0<x_n<L,\\
- A^{(1)}_n(x',-x_n),& -L<x_n<0,
\end{cases}\\
\tilde q^{(1)}(x)&=\begin{cases} q^{(1)}(x',x_n),& 0<x_n<L,\\
 q^{(1)}(x',-x_n),& -L<x_n<0.
\end{cases}
\end{align*}  
By \eqref{eq_normal_component_zero}, $A^{(1)}_n|_{x_n=0}=0$, and therefore,  $\tilde A^{(1)}\in W^{1,\infty}((\Sigma\cap  B)\cup (\Sigma\cap  B)_0^*)$ and $\tilde q^{(1)}\in L^{\infty}((\Sigma\cap  B)\cup (\Sigma\cap B)_0^*)$.
Proposition \ref{prop_CGO_Lip} and Remark \ref{rem_com_geom_1} imply that  there exist  complex geometric optics solutions
\[
\tilde u_1(x,\zeta_1;h)=e^{x\cdot \zeta_1/h} (e^{\Phi_1(x,i\mu^{(1)}+\mu^{(2)};h)}+r_1(x,\zeta_1; h))\in H^2((\Sigma\cap B)\cup (\Sigma\cap  B)_0^*)
\] 
of the equation $(\mathcal{L}_{\tilde A^{(1)},\tilde q^{(1)}}-k^2)\tilde u_1=0$ in $(\Sigma\cap B)\cup (\Sigma\cap B)_0^*$, where 
\begin{equation}
\label{eq_r_1}
\|r_1\|_{H^1_{\textrm{scl}}((\Sigma\cap  B)\cup (\Sigma\cap  B)_0^*)}=\mathcal{O}(h^{1/3})
\end{equation}
 and $\Phi_1\in C^{\infty}(\overline{(\Sigma\cap  B)\cup (\Sigma\cap B)_0^*})$ satisfying
\begin{equation}
\label{eq_amplitude_1}
(i\mu^{(1)}+\mu^{(2)})\cdot \nabla e^{\Phi_1}+ i (i\mu^{(1)}+\mu^{(2)})\cdot  (\tilde A^{(1)})^\sharp e^{\Phi_1}=0\quad \textrm{in}\quad (\Sigma\cap  B)\cup (\Sigma\cap B)_0^*,
\end{equation}
\begin{equation}
\label{eq_deriv_Phi_1}
\|\p^{\alpha} e^{\Phi_1}\|_{L^\infty((\Sigma\cap  B)\cup (\Sigma\cap B)_0^*)}\le C_\alpha h^{-|\alpha|/3}, \quad |\alpha|\ge 0. 
\end{equation}
By Remark \ref{rem_CGO_Lip}, $\Phi_1(x,i\mu^{(1)}+\mu^{(2)};h)\to \Phi_1^{(0)}(x,i\mu^{(1)}+\mu^{(2)})$ in the $L^\infty$-norm as $h\to 0$, where $\Phi_1^{(0)}$ solves the equation
\begin{equation}
\label{eq_amplitude_1_without_h}
(i\mu^{(1)}+\mu^{(2)})\cdot \nabla \Phi_1^{(0)}+ i (i\mu^{(1)}+\mu^{(2)})\cdot  \tilde A^{(1)} =0\quad \textrm{in}\quad (\Sigma\cap  B)\cup (\Sigma\cap B)_0^*.
\end{equation}
Let 
\begin{equation}
\label{eq_u_1-CGO}
u_1(x)=\tilde u_1(x',x_n)-\tilde u_1(x',-x_n),\quad x\in \Sigma\cap B. 
\end{equation}
Then it is easy to check that  $u_1\in W_{l_2}(\Sigma\cap B)$.

To construct $u_2$, we have to fulfill the condition $u_2|_{l_1}=0$.  To this end, we reflect $\Sigma\cap B$ with respect to the plane $x_n=L$ and denote this reflection by 
$(\Sigma\cap B)_L^*=\{(x',-x_n+2L):x=(x',x_n)\in\Sigma\cap  B\}$. For the coefficients  $A^{(2)}_j$, $j=1,\dots,n-1$, and $q^{(2)}$, we do the even extension, and  for $A^{(2)}_n$, we do the odd extension, i.e., 
\begin{align*}
\tilde A^{(2)}_j(x)&=\begin{cases} A^{(2)}_j(x',x_n),& 0<x_n<L,\\
 A^{(2)}_j(x',-x_n+2L),& L<x_n<2L,
\end{cases},\quad j=1,\dots, n-1,\\
\tilde A^{(2)}_n(x)&=\begin{cases} A^{(2)}_n(x',x_n),& 0<x_n<L,\\
- A^{(2)}_n(x',-x_n+2L),& L<x_n<2L,
\end{cases}\\
\tilde q^{(2)}(x)&=\begin{cases} q^{(2)}(x',x_n),& 0<x_n<L,\\
 q^{(2)}(x',-x_n+2L),& L<x_n<2L.
\end{cases}
\end{align*}  
As $A^{(2)}_n|_{x_n=L}=0$,  we have  $\tilde A^{(2)}\in W^{1,\infty}((\Sigma\cap  B)\cup (\Sigma\cap  B)_L^*)$ and $\tilde q^{(2)}\in L^{\infty}((\Sigma\cap  B)\cup (\Sigma\cap B)_L^*)$. Thus, by Proposition \ref{prop_CGO_Lip},  one can construct complex geometric optics solutions,
\[
\tilde u_2(x,\zeta_2;h)=e^{x\cdot \zeta_2/h} (e^{\Phi_2(x,i\mu^{(1)}-\mu^{(2)};h)}+r_2(x,\zeta_2; h))\in H^2((\Sigma\cap  B)\cup (\Sigma\cap  B)_L^*)
\]
of the equation $(\mathcal{L}_{\overline{\tilde A^{(2)}},\overline{\tilde q^{(2)}}}-k^2)u_2=0$ in $(\Sigma\cap  B)\cup (\Sigma\cap  B)_L^*$, where 
\begin{equation}
\label{eq_r_2}
\|r_2\|_{H^1_{\textrm{scl}}((\Sigma\cap  B)\cup (\Sigma\cap  B)_L^*)}=\mathcal{O}(h^{1/3})
\end{equation}
 and $\Phi_2\in C^{\infty}(\overline{(\Sigma\cap  B)\cup (\Sigma\cap  B)_L^*})$ satisfies 
\begin{equation}
\label{eq_amplitude_2}
(i\mu^{(1)}-\mu^{(2)})\cdot \nabla \Phi_2+ i (i\mu^{(1)}-\mu^{(2)})\cdot (\overline{\tilde A^{(2)}})^\sharp=0\quad \textrm{in}\quad (\Sigma\cap  B)\cup (\Sigma\cap  B)_L^*,
\end{equation}
\begin{equation}
\label{eq_deriv_Phi_2}
\|\p^\alpha e^{\Phi_2}\|_{L^\infty((\Sigma\cap  B)\cup (\Sigma\cap  B)_L^*)}\le C_\alpha h^{-|\alpha|/3}, \quad |\alpha|\ge 0.
\end{equation}
By Remark \ref{rem_CGO_Lip}, $\Phi_2(x,i\mu^{(1)}-\mu^{(2)};h)\to \Phi_2^{(0)}(x,i\mu^{(1)}-\mu^{(2)})$ in the $L^\infty$-norm as $h\to 0$, where $\Phi_2^{(0)}$ solves the equation
\begin{equation}
\label{eq_amplitude_2_without_h}
(i\mu^{(1)}-\mu^{(2)})\cdot \nabla \Phi_2^{(0)}+ i (i\mu^{(1)}-\mu^{(2)})\cdot \overline{\tilde A^{(2)}}=0\quad \textrm{in}\quad (\Sigma\cap  B)\cup (\Sigma\cap  B)_L^*. 
\end{equation}
Let 
\begin{equation}
\label{eq_u_2-CGO}
u_2(x)=\tilde u_2(x',x_n)-\tilde u_2(x',-x_n+2L),\quad x\in \Sigma\cap B.
\end{equation}
Then $u_2\in V_{l_1}(\Sigma\cap B)$. 

For future references, it will be convenient to have the  following explicit expressions for  the complex geometric optics solutions $u_1$ and $u_2$, given by \eqref{eq_u_1-CGO} and \eqref{eq_u_2-CGO}, 
\begin{equation}
\label{eq_u_1-CGO_1}
u_1(x)=e^{x\cdot\zeta_1/h}(e^{\Phi_1(x)}+r_1(x))-e^{(x',-x_n)\cdot\zeta_1/h}(e^{\Phi_1(x',-x_n)}+r_1(x',-x_n)),
\end{equation}
\begin{equation}
\label{eq_u_2-CGO_2}
u_2(x)=e^{x\cdot\zeta_2/h}(e^{\Phi_2(x)}+r_2(x))-e^{(x',-x_n+2L)\cdot\zeta_2/h}(e^{\Phi_2(x',-x_n+2L)}+r_2(x',-x_n+2L)).
\end{equation}

The next step is to substitute the complex geometric optics solutions $u_1$ and $u_2$ into \eqref{eq_identity_main}. 
To this end, we first note that
\begin{align*}
e^{x\cdot\zeta_1/h}e^{x\cdot\overline{\zeta_2}/h}&=e^{ix\cdot\xi},\\
e^{(x',-x_n)\cdot\zeta_1/h}e^{x\cdot\overline{\zeta_2}/h}&=e^{ix'\cdot\xi'-\frac{2i}{h}\sqrt{1-\frac{h^2|\xi|^2}{4}}\mu^{(1)}_nx_n-2\mu^{(2)}_nx_n/h},\\
e^{x\cdot\zeta_1/h}e^{(x',-x_n+2L)\cdot\overline{\zeta_2}/h}&= e^{ix'\cdot\xi'+2\mu^{(2)}_n(x_n-L)/h+ia_1},\\
e^{(x',-x_n)\cdot\zeta_1/h}e^{(x',-x_n+2L)\cdot\overline{\zeta_2}/h}&=e^{ix'\cdot\xi'-2L\mu^{(2)}_n/h+ia_2},
\end{align*}
where $a_1\in \R$ and $a_2\in \R$ are given by
\begin{align*}
a_1&=2\sqrt{1-\frac{h^2|\xi|^2}{4}}\mu^{(1)}_n\bigg(\frac{x_n}{h}-\frac{L}{h}\bigg)+L\xi_n,\\
a_2&=-x_n\xi_n+L\xi_n-\frac{2L}{h}\sqrt{1-\frac{h^2|\xi|^2}{4}}\mu^{(1)}_n. 
\end{align*}
We shall further assume that $\mu^{(2)}_n>0$ and therefore, for $0<x_n<L$, we have pointwise, 
\begin{equation}
\label{eq_pha}
\begin{aligned}
&|e^{(x',-x_n)\cdot\zeta_1/h}e^{x\cdot\overline{\zeta_2}/h}|\to 0, \ \textrm{as}\ h\to +0, \\
&|e^{x\cdot\zeta_1/h}e^{(x',-x_n+2L)\cdot\overline{\zeta_2}/h}| \to 0, \ \textrm{as}\ h\to +0,\\
&|e^{(x',-x_n)\cdot\zeta_1/h}e^{(x',-x_n+2L)\cdot\overline{\zeta_2}/h}|\to 0, \ \textrm{as}\ h\to +0.
\end{aligned}
\end{equation}
In what follows it will be convenient to write the following norm estimates, which are consequences of  \eqref{eq_deriv_Phi_1}, \eqref{eq_deriv_Phi_2}, \eqref{eq_r_1} and \eqref{eq_r_2}, 
\begin{equation}
\label{eq_rem_amp}
\begin{aligned}
&\|e^{\Phi_j}\|_{L^\infty}=\mathcal{O}(1),\quad \|De^{\Phi_j}\|_{L^\infty}=\mathcal{O}(h^{-1/3}),\\
&\|r_j\|_{L^2}=\mathcal{O}(h^{1/3}),\quad \|Dr_j\|_{L^2}=\mathcal{O}(h^{-2/3}),\quad j=1,2.
\end{aligned}
\end{equation}

For the complex geometric optics solutions $u_1$ and $u_2$, given by \eqref{eq_u_1-CGO_1} and \eqref{eq_u_2-CGO_2},  using \eqref{eq_pha} together with \eqref{eq_rem_amp},
we get 
\[
h\int_{\Sigma\cap B}((A^{(1)})^2-(A^{(2)})^2+q^{(1)}-q^{(2)})u_1\overline{u_2}dx\to 0, \ \textrm{as}\ h\to +0. 
\]
Denoting $\zeta_j^*=(\zeta_j',-(\zeta_j)_n)$ for $\zeta_j=(\zeta_j',(\zeta_j)_n)$, $j=1,2$,  and using  \eqref{eq_u_1-CGO_1} and \eqref{eq_u_2-CGO_2}, we obtain that
\begin{equation}
\label{eq_Du_1}
\begin{aligned}
Du_1(x)=&-\frac{i\zeta_1}{h} e^{x\cdot\zeta_1/h}(e^{\Phi_1(x)}+r_1(x))+ e^{x\cdot\zeta_1/h}(De^{\Phi_1(x)}+Dr_1(x))\\
&+\frac{i\zeta_1^*}{h}
e^{(x',-x_n)\cdot\zeta_1/h}(e^{\Phi_1(x',-x_n)}+r_1(x',-x_n))\\
&-e^{(x',-x_n)\cdot\zeta_1/h}(De^{\Phi_1(x',-x_n)}+Dr_1(x',-x_n)),
\end{aligned}
\end{equation}
\begin{equation}
\label{eq_Du_2}
\begin{aligned}
\overline{Du_2(x)}=&\frac{i\overline{\zeta_2}}{h}e^{x\cdot\overline{\zeta_2}/h}(e^{\overline{\Phi_2(x)}}+\overline{r_2(x)})+e^{x\cdot\overline{\zeta_2}/h}(\overline{De^{\Phi_2(x)}}+\overline{Dr_2(x)})\\
&-\frac{i\overline{\zeta_2^*}}{h}e^{(x',-x_n+2L)\cdot\overline{\zeta_2}/h}(e^{\overline{\Phi_2(x',-x_n+2L)}}+\overline{r_2(x',-x_n+2L)})\\
&-e^{(x',-x_n+2L)\cdot\overline{\zeta_2}/h}(\overline{De^{\Phi_2(x',-x_n+2L)}}+\overline{Dr_2(x',-x_n+L)}).
\end{aligned}
\end{equation}
Using \eqref{eq_u_1-CGO_1},  \eqref{eq_u_2-CGO_2}, \eqref{eq_Du_1} and \eqref{eq_Du_2}, by the dominated convergence theorem together with \eqref{eq_pha} and \eqref{eq_rem_amp}, we get
\begin{align*}
&h\int_{\Sigma\cap B} (A^{(1)}-A^{(2)})\cdot ((Du_1)\overline{u_2}+u_1\overline{Du_2})dx\\
&\to -2i(i\mu^{(1)}+\mu^{(2)})\cdot \int_{\Sigma\cap B} (A^{(1)}-A^{(2)})e^{ix\cdot\xi}e^{\Phi_1^{(0)}(x)+\overline{\Phi_2^{(0)}(x)}}dx,\ \textrm{as}\ h\to+0, 
\end{align*}
where $\Phi_1^{(0)}$ and $\Phi_2^{(0)}$ solve \eqref{eq_amplitude_1_without_h} and \eqref{eq_amplitude_2_without_h}, respectively.

Hence, multiplying \eqref{eq_identity_main} by $h$ and letting $h\to +0$, we obtain 
\begin{equation}
\label{eq_identity_main_2}
(i\mu^{(1)}+\mu^{(2)})\cdot \int_{\Sigma\cap B} (A^{(1)}-A^{(2)})e^{ix\cdot\xi}e^{\Phi_1^{(0)}(x)+\overline{\Phi_2^{(0)}(x)}}dx=0, 
\end{equation}
for all $\xi,\mu^{(1)},\mu^{(2)}\in\R^n$  such that $\mu^{(2)}_n>0$, $|\mu^{(1)}|=|\mu^{(2)}|=1$, and $\mu^{(1)}\cdot\mu^{(2)}=\mu^{(1)}\cdot\xi=\mu^{(2)}\cdot\xi=0$.

In the spirit of \cite{DKSU_2007, Eskin_Rals_1995, Salo_2006,  Sun_1993}, we get the following result. 

\begin{prop}
\label{prop_32}
The equality \eqref{eq_identity_main_2} implies that 
\begin{equation}
\label{eq_identity_main_2_new}
(i\mu^{(1)}+\mu^{(2)})\cdot \int_{\Sigma\cap B} (A^{(1)}-A^{(2)})e^{ix\cdot\xi}dx=0. 
\end{equation}

\end{prop}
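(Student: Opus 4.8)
The plan is to use the transport equations \eqref{eq_amplitude_1_without_h} and \eqref{eq_amplitude_2_without_h} to recognize the integrand of \eqref{eq_identity_main_2_new} as a $\bar\partial$-derivative of the combined phase, and then to remove the factor $e^{\Phi_1^{(0)}+\overline{\Phi_2^{(0)}}}$ by a Stokes/residue argument in the two-plane spanned by $\mu^{(1)}$ and $\mu^{(2)}$. Write $\omega:=i\mu^{(1)}+\mu^{(2)}$ and $\Psi:=\Phi_1^{(0)}+\overline{\Phi_2^{(0)}}$. First I would conjugate \eqref{eq_amplitude_2_without_h}: since $\mu^{(1)},\mu^{(2)}$ are real, $\overline{i\mu^{(1)}-\mu^{(2)}}=-\omega$, so $\omega\cdot\nabla\overline{\Phi_2^{(0)}}=i\,\omega\cdot\tilde A^{(2)}$. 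Adding this to \eqref{eq_amplitude_1_without_h} gives, on $\Sigma\cap B$ (where $\tilde A^{(j)}=A^{(j)}$),
\[
\omega\cdot\nabla\Psi=-i\,\omega\cdot(A^{(1)}-A^{(2)}).
\]
Consequently the integrand of \eqref{eq_identity_main_2_new} equals $i\,(\omega\cdot\nabla\Psi)\,e^{ix\cdot\xi}$, while that of \eqref{eq_identity_main_2} equals $i\,(\omega\cdot\nabla\Psi)\,e^{\Psi}e^{ix\cdot\xi}=i\,\omega\cdot\nabla(e^{\Psi})\,e^{ix\cdot\xi}$.

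Next I would introduce on the plane $\mathrm{span}(\mu^{(1)},\mu^{(2)})$ the complex coordinate $z=x\cdot\mu^{(2)}+i\,x\cdot\mu^{(1)}$, so that $\omega\cdot\nabla=2\partial_{\bar z}$, and observe that $\xi\perp\mu^{(1)},\mu^{(2)}$ makes $e^{ix\cdot\xi}$ constant along each such plane (hence $\partial_{\bar z}(e^{ix\cdot\xi})=0$). Slicing $\Sigma\cap B$ by these planes, both integrals reduce, slice by slice, to $2i\int \partial_{\bar z}(\,\cdot\,)\,dA(z)$, with $(\cdot)=\Psi$ for \eqref{eq_identity_main_2_new} and $(\cdot)=e^{\Psi}$ for \eqref{eq_identity_main_2}. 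By Stokes' theorem each slice integral is, up to a constant, a contour integral at infinity of $\Psi$, respectively $e^{\Psi}$. The decisive input is the explicit Cauchy-transform representation \eqref{eq_expl_Phi} together with Remark \ref{rem_CGO_Lip}: because $A^{(1)},A^{(2)}$ are compactly supported, $\Psi(z)=O(1/|z|)$ as $|z|\to\infty$, so that $e^{\Psi}-1-\Psi=O(1/|z|^2)$ and the leading $1/z$ (residue) contributions of $\Psi$ and of $e^{\Psi}$ coincide. Thus replacing $e^{\Psi}$ by $1$ does not change the value of the slice integral, and \eqref{eq_identity_main_2} yields \eqref{eq_identity_main_2_new}.

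The hard part will be the bookkeeping of the integration domain. The phases $\Phi_1^{(0)},\Phi_2^{(0)}$ are built from the reflected potentials $\tilde A^{(1)},\tilde A^{(2)}$ on the enlarged (reflected) domains, so the identity $\omega\cdot\nabla\Psi=-i\,\omega\cdot(A^{(1)}-A^{(2)})$ holds only on $\Sigma\cap B$, whereas the Stokes/decay argument would like to integrate over entire planes. One must therefore control the contributions from $\partial(\Sigma\cap B)$, i.e.\ the flat pieces $l_1,l_2$ and the spherical piece $l_3$: on $l_3$ the integrand vanishes because $\supp(A^{(1)}-A^{(2)})$ is compactly contained in $B$, and near $l_1,l_2$ one must exploit the normalization \eqref{eq_normal_component_zero} together with the even/odd reflection symmetry used to define $\tilde A^{(j)}$ — precisely the features that arranged $u_1|_{l_2}=0$ and $u_2|_{l_1}=0$. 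Verifying that these boundary contributions cancel, so that the slice integrals behave as if taken over full planes, is the technical heart of the argument; once it is in place, the residue-matching of the previous paragraph finishes the proof immediately.
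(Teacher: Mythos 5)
Your opening steps reproduce the paper's: the combined transport equation \eqref{eq_amplitude_sum}, i.e. $\omega\cdot\nabla\Psi=-i\,\omega\cdot(A^{(1)}-A^{(2)})$ on $\Sigma\cap B$ with $\omega=i\mu^{(1)}+\mu^{(2)}$, the complex coordinate $z$ with $\omega\cdot\nabla=2\partial_{\bar z}$, the slicing in $y''$ by inverting the Fourier transform in $\xi''$, and Stokes on each slice. The gap is the step you defer as ``the technical heart'': arranging that the slice integrals behave as if taken over full planes so that a residue comparison at infinity applies. This is not bookkeeping; it is unobtainable along the route you describe. Globally one has $2\partial_{\bar z}\Psi=-i\,\omega\cdot(\tilde A^{(1)}-\tilde A^{(2)})$, and because the two potentials are reflected across \emph{different} hyperplanes, $\tilde A^{(1)}-\tilde A^{(2)}$ does \emph{not} vanish outside $\Sigma\cap B$: it equals $\tilde A^{(1)}\neq 0$ on the reflected slab $-L<x_n<0$ and $-\tilde A^{(2)}\neq 0$ on $L<x_n<2L$. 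Hence the full-plane integrals of $\partial_{\bar z}(e^{\Psi})$ and $\partial_{\bar z}\Psi$ differ from the integrals in \eqref{eq_identity_main_2} and \eqref{eq_identity_main_2_new} by contributions from the reflected regions, about which the hypothesis says nothing; no cancellation by symmetry is available, since the slicing planes (which have $\mu^{(2)}_n>0$) are not invariant under either reflection. Moreover, your claim that the boundary contribution on $l_3$ vanishes confuses the volume integrand with the Stokes boundary integrand: the latter is $g\,e^{\Psi}$ (resp.\ $g\,\Psi$), and $\Psi$ is a nonlocal Cauchy transform of the potentials which certainly does not vanish on $\partial B$ merely because $A^{(1)}-A^{(2)}$ does.

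The paper's proof stays on the bounded slices $T_{y''}$ and proves precisely the implication your plan is missing: if $\int_{\partial T_{y''}}g\,e^{\Psi}\,dz=0$ for \emph{all} holomorphic $g$, then $\int_{\partial T_{y''}}g\,\Psi\,dz=0$ for all holomorphic $g$. Two ingredients are needed that your proposal lacks. First, the hypothesis must be upgraded from $g=1$ to arbitrary holomorphic $g$; this comes from the freedom to replace the CGO amplitude $e^{\Phi_1}$ by $g\,e^{\Phi_1}$ for any solution of \eqref{eq_g} and to rerun the derivation of \eqref{eq_identity_main_2}. This is essential: on a bounded contour, $\int_{\partial T}e^{\Psi}dz=0$ alone does not imply $\int_{\partial T}\Psi\,dz=0$ (one can build counterexamples of the form $\Psi=\beta z+\gamma\bar z$ using zeros of Bessel functions), whereas at infinity, with decay, your residue matching would indeed work for $g=1$ --- which is exactly why your argument is the classical whole-space one (as in \cite{Sun_1993}, \cite{Eskin_Rals_1995}, \cite{Salo_diss}) and does not transfer to the reflected, bounded-slice situation here. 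Second, the passage from moments of $e^{\Psi}$ to moments of $\Psi$ on the contour is the argument of \cite[Lemma 5.1]{DKSU_2007}, which the paper reproduces: the Cauchy integral of $e^{\Psi}|_{\partial T_{y''}}$ vanishes outside $T_{y''}$ by the moment hypothesis, so by the Plemelj--Sokhotski--Privalov formula it defines a holomorphic $F$ inside with $F=e^{\Psi}$ on $\partial T_{y''}$; the argument principle, via the homotopy $e^{s\Psi}$, shows $F$ is nowhere vanishing; on the simply connected slice $F$ then has a holomorphic logarithm with $\log F=\Psi$ on the contour, and Cauchy's theorem gives $\int_{\partial T_{y''}}g\,\Psi\,dz=0$. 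This nonlinear-to-linear step on the contour is the heart of the proof and cannot be replaced by decay at infinity in this setting.
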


\begin{proof}
First notice that it follows from \eqref{eq_amplitude_1_without_h} and \eqref{eq_amplitude_2_without_h} that
\begin{equation}
\label{eq_amplitude_sum}
(i\mu^{(1)}+\mu^{(2)})\cdot\nabla (\Phi_1^{(0)}+\overline{\Phi_2^{(0)}})+i(i\mu^{(1)}+\mu^{(2)})\cdot (A^{(1)}-A^{(2)})=0\quad \textrm{in}\quad \Sigma\cap B. 
\end{equation}
Notice that \eqref{eq_amplitude_1} implies that in the expression \eqref{eq_u_1-CGO_1}  for $u_1$, we may replace $e^{\Phi_1}$ by $ge^{\Phi_1}$ if $g\in C^\infty(\overline{(\Sigma\cap  B)\cup (\Sigma\cap B)_0^*})$ is a solution of 
\begin{equation}
\label{eq_g}
(i\mu^{(1)}+\mu^{(2)})\cdot \nabla g=0\quad  \textrm{in}\quad (\Sigma\cap  B)\cup (\Sigma\cap B)_0^*.  
\end{equation}
Then \eqref{eq_identity_main_2} can be replaced by 
\[
(i\mu^{(1)}+\mu^{(2)})\cdot \int_{\Sigma\cap B} (A^{(1)}-A^{(2)})ge^{ix\cdot\xi}e^{\Phi_1^{(0)}(x)+\overline{\Phi_2^{(0)}(x)}}dx=0.
\]
We conclude from \eqref{eq_amplitude_sum} that
\[
(i\mu^{(1)}+\mu^{(2)})\cdot (A^{(1)}-A^{(2)})ge^{\Phi_1^{(0)}+\overline{\Phi_2^{(0)}}}=i g(i\mu^{(1)}+\mu^{(2)})\cdot\nabla e^{\Phi_1^{(0)}+\overline{\Phi_2^{(0)}}},
\]
and therefore, we have
\begin{equation}
\label{eq_identity_main_3}
\int_{\Sigma\cap B} e^{ix\cdot \xi} g (i\mu^{(1)}+\mu^{(2)})\cdot\nabla e^{\Phi_1^{(0)}+\overline{\Phi_2^{(0)}}} dx=0,
\end{equation}
for all $g$ satisfying \eqref{eq_g}.  

Completing the orthonormal family $\mu^{(2)}$, $\mu^{(1)}$ to an orthonormal basis in $\R^n$, $\mu^{(2)},\mu^{(1)},\mu^{(3)},\dots,\mu^{(n)}$, we have for any vector $x\in \R^n$, 
\[
x=(x\cdot\mu^{(2)})\mu^{(2)}+(x\cdot\mu^{(1)})\mu^{(1)}+ (x\cdot\mu^{(3)})\mu^{(3)}+\cdots+ (x\cdot\mu^{(n)})\mu^{(n)}.
\] 
We introduce new linear coordinates in $\R^n$, given by the orthogonal transformation $T:\R^n\to\R^n$, $T(x)=y$, where $y_1=x\cdot\mu^{(2)}$, $y_2=x\cdot\mu^{(1)}$, $y_j=x\cdot\mu^{(j)}$, $j=3,\dots,n$. 
Denoting $z=y_1+i y_2$ and $\p_{\bar z}=(\p_{y_1}+i\p_{y_2})/2$, we have 
\[
(i\mu^{(1)}+\mu^{(2)})\cdot\nabla =2\p_{\bar z}. 
\]
Thus, changing coordinates in \eqref{eq_identity_main_3}, we get
\begin{equation}
\label{eq_identity_main_4}
\int_{T(\Sigma\cap B)} e^{iy\cdot \xi} g\p_{\bar z}(e^{\Phi_1^{(0)}+\overline{\Phi_2^{(0)}}})dy=0,
\end{equation}
for all $\xi=(0,0,\xi'')$, $\xi''\in\R^{n-2}$, and all $g\in C^\infty(\overline{T(\Sigma\cap B)})$ satisfying $\p_{\bar z} g=0$.  Taking $g=g(z)$ holomorphic in $z$, independent of $y''=(y_3,\dots,y_n)$, and taking the inverse Fourier transform in \eqref{eq_identity_main_4}  in the variable $\xi''$, we get, for all $y''\in\R^{n-2}$,
\[
\int_{T_{y''}}  g(z)\p_{\bar z}(e^{\Phi_1^{(0)}+\overline{\Phi_2^{(0)}}})d\bar z\wedge dz=0,
\]
where $T_{y''}=T(\Sigma\cap B)\cap \Pi_{y''}$ and $\Pi_{y''}=\{(y_1,y_2,y''):(y_1,y_2)\in\R^2\}$. 
Notice that the boundary of $T_{y''}$ is piecewise $C^\infty$-smooth. 
Since 
\[
d(ge^{\Phi_1^{(0)}+\overline{\Phi_2^{(0)}}}dz)=g\p_{\bar z}(e^{\Phi_1^{(0)}+\overline{\Phi_2^{(0)}}})d\bar z\wedge dz,
\]
by the Stokes' formula, we obtain that 
\begin{equation}
\label{eq_orth_hol_g}
\int_{\p T_{y''}} ge^{\Phi_1^{(0)}+\overline{\Phi_2^{(0)}}}dz=0,
\end{equation}
for all holomorphic functions $g\in C^\infty(\overline{T_{y''}})$. 

Next we shall show that \eqref{eq_orth_hol_g} implies that there exists a nowhere vanishing holomorphic function $F\in C(\overline{T_{y''}})$ such that
\begin{equation}
\label{eq_log}
F|_{\p T_{y''}}=e^{\Phi_1^{(0)}+\overline{\Phi_2^{(0)}}}|_{\p T_{y''}}. 
\end{equation}
This follows from the arguments in \cite[Lemma 5.1]{DKSU_2007}.  For the convenience of the reader, we present these arguments here.  
Following \cite[Lemma 5.1]{DKSU_2007}, consider the Cauchy integral   
\[
F(z)=\frac{1}{2\pi i}\int_{\p T_{y''}} \frac{e^{\Phi_1^{(0)}(\zeta)+\overline{\Phi_2^{(0)}(\zeta)}}}{\zeta-z}d\zeta,\quad z\in\C\setminus\p T_{y''}. 
\]
The function $F$ is holomorphic inside and outside of  $\p T_{y''}$.  As $e^{\Phi_1^{(0)}+\overline{\Phi_2^{(0)}}}$ is Lipschitz, the Plemelj-Sokhotski-Privalov formula states that
\begin{equation}
\label{eq_PSP_formula}
\lim_{z\to z_0,z\in T_{y''}} F(z)-\lim_{z\to z_0,z\notin T_{y''}}F(z)=e^{\Phi_1^{(0)}(z_0)+\overline{\Phi_2^{(0)}(z_0)}},\quad z_0\in \p T_{y''}.
\end{equation} 
Since the function $\zeta\mapsto (\zeta-z)^{-1}$ is holomorphic on $T_{y''}$ when $z\notin T_{y''}$, \eqref{eq_orth_hol_g} implies that $F(z)=0$ when $z\notin T_{y''}$.  Hence, the second limit in \eqref{eq_PSP_formula} is zero and therefore, $F$ is holomorphic function on $T_{y''}$ such that \eqref{eq_log} holds. 
Let us show that $F$ nowhere vanishes in $T_{y''}$.  To this end, let  $\p T_{y''}$ be parametrized by $z=\gamma(t)$, and $N$ be the number of zeros of $F$ in $T_{y''}$. Then by the argument principle, 
\[
N=\frac{1}{2\pi i}\int_{z\in \gamma(t)}\frac{F'(z)}{F(z)}dz=\frac{1}{2\pi i}\int_{\zeta\in F(\gamma(t))}\frac{d\zeta}{\zeta}=\frac{1}{2\pi i}\int_{\zeta\in e^{\Phi_1^{(0)}(\gamma(t))+\overline{\Phi_2^{(0)}(\gamma(t))}}}\frac{d\zeta}{\zeta}=0.
\]
The latter equality follows from the fact that the contour $e^{\Phi_1^{(0)}(\gamma(t))+\overline{\Phi_2^{(0)}(\gamma(t))}}$ is homotopic to $\{1\}$ with the homotopy given by $e^{s(\Phi_1^{(0)}(\gamma(t))+\overline{\Phi_2^{(0)}(\gamma(t))})}$, $s\in [0,1]$. The claim follows.

Next since $F$ is nowhere vanishing holomorphic function on $T_{y''}$ and $T_{y''}$ is simply connected, it admits a holomorphic logarithm. Hence,  \eqref{eq_log} implies that 
\[
(\log F)|_{\p T_{y''}}=(\Phi_1^{(0)}+\overline{\Phi_2^{(0)}})|_{\p T_{y''}},
\]
and therefore, by the Cauchy theorem, 
\[
\int_{\p T_{y''}} g (\Phi_1^{(0)}+\overline{\Phi_2^{(0)}}) dz=\int_{\p T_{y''}} g \log F dz=0,
\]
where $g\in C^\infty(\overline{T_{y''}})$ is  an arbitrary function such that $\p_{\bar z}g=0$.  
An application of Stokes' formula gives
\[
\int_{T_{y''}} g\p_{\bar z}(\Phi_1^{(0)}+\overline{\Phi_2^{(0)}})d\bar z\wedge dz=0. 
\]
Taking the Fourier transform with respect to $y''$, we get
\[
\int_{T(\Sigma\cap B)} e^{iy\cdot \xi} g\p_{\bar z}(\Phi_1^{(0)}+\overline{\Phi_2^{(0)}})dy=0,
\]
for all $\xi=(0,0,\xi'')$, $\xi''\in\R^{n-2}$.   Hence, returning back to the variables $x$, we have
\[
(i\mu^{(1)}+\mu^{(2)})\cdot \int_{\Sigma\cap B} e^{i x\cdot \xi}g(x)\nabla(\Phi_1^{(0)}+\overline{\Phi_2^{(0)}})dx=0,
\]
where $g\in C^\infty(\overline{\Sigma\cap B})$ is such that $(i\mu^{(1)}+\mu^{(2)})\cdotÊ\nabla g=0$ in $\Sigma\cap B$. 

Using \eqref{eq_amplitude_sum},  we obtain that
\begin{equation}
\label{eq_identity_main_2_new_zero}
(i\mu^{(1)}+\mu^{(2)})\cdot \int_{\Sigma\cap B} (A^{(1)}-A^{(2)})g(x)e^{ix\cdot\xi}dx=0. 
\end{equation}
With $g=1$, we get \eqref{eq_identity_main_2_new}.   The proof is complete. 

\end{proof}

Since in \eqref{eq_identity_main_2_new} the vector $\mu^{(1)}$ can be replaced by $-\mu^{(1)}$, we get
\begin{equation}
\label{eq_recover_rot}
\mu^{(1)}\cdot \int_{\Sigma\cap B} (A^{(1)}-A^{(2)})e^{ix\cdot\xi}dx=0, 
\end{equation}
for all $\xi,\mu^{(1)}\in\R^n$  such that $\mu^{(1)}\cdot\xi=0$ and for which there is a vector $\mu^{(2)}\in\R^n$ such that $\mu^{(2)}\cdot \mu^{(1)}=\mu^{(2)}\cdot \xi=0$ and $\mu^{(2)}_n>0$.

In the proof of the following result, we shall use some ideas from \cite{Salo_diss}. 

\begin{prop}

\label{prop_33-}
We have 
\begin{equation}
\label{eq_recover_rot_1}
\p_j(A_k^{(1)}-A_k^{(2)})-\p_k(A_j^{(1)}-A_j^{(2)})=0\quad\textrm{in }\Sigma\cap B,\quad 1\le j,k\le n.
\end{equation}

\end{prop}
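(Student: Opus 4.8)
The plan is to read \eqref{eq_recover_rot} as a statement about the Fourier transform of the magnetic field of $A^{(1)}-A^{(2)}$ and to show this transform vanishes. Set $A:=A^{(1)}-A^{(2)}$, extended by zero outside $\Sigma\cap B$; since $A\in L^\infty$ is compactly supported, the $\C^n$-valued function
\[
\widehat A(\xi)=\int_{\Sigma\cap B}A(x)e^{ix\cdot\xi}\,dx,\qquad \xi\in\R^n,
\]
is entire, hence continuous. In this notation \eqref{eq_recover_rot} reads $\mu^{(1)}\cdot\widehat A(\xi)=0$ whenever $\mu^{(1)}\cdot\xi=0$ and there is some $\mu^{(2)}\in\R^n$ with $\mu^{(2)}\cdot\mu^{(1)}=\mu^{(2)}\cdot\xi=0$ and $\mu^{(2)}_n>0$. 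Since, after a Fourier transform, the desired identity \eqref{eq_recover_rot_1} amounts to $\xi_j\widehat A_k(\xi)-\xi_k\widehat A_j(\xi)=0$ for all $\xi$ and all $j,k$, it is equivalent to the assertion that $\widehat A(\xi)$ is parallel to $\xi$ for every $\xi$.

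First I would fix $\xi\ne0$ with $\xi$ not parallel to $e_n$ and analyze for which unit vectors $\mu^{(1)}\in\xi^\perp$ the relation \eqref{eq_recover_rot} is available. The admissible companions $\mu^{(2)}$ range over the unit sphere of $W:=\operatorname{span}\{\xi,\mu^{(1)}\}^\perp$, of dimension $n-2\ge1$, and a choice with $\mu^{(2)}_n>0$ exists precisely when the orthogonal projection of $e_n$ onto $W$ is nonzero, i.e.\ when $e_n\notin\operatorname{span}\{\xi,\mu^{(1)}\}$; for $\xi\not\parallel e_n$ this is equivalent to $\mu^{(1)}\notin\operatorname{span}\{\xi,e_n\}$. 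The set $\operatorname{span}\{\xi,e_n\}\cap\xi^\perp$ is a one-dimensional, hence proper, subspace of $\xi^\perp$ (here $\dim\xi^\perp=n-1\ge2$ is used), so the admissible $\mu^{(1)}$ are dense in the unit sphere of $\xi^\perp$. Because $\mu\mapsto\mu\cdot\widehat A(\xi)$ is continuous and linear, the relation $\mu^{(1)}\cdot\widehat A(\xi)=0$ extends from this dense set to all of $\xi^\perp$. Applying this to the real and imaginary parts of $\widehat A$ gives $\widehat A(\xi)\in(\xi^\perp)^\perp\otimes\C=\C\xi$, so $\widehat A(\xi)=c(\xi)\xi$ for a scalar $c(\xi)$, whence $\xi_j\widehat A_k(\xi)-\xi_k\widehat A_j(\xi)=0$ for every such $\xi$.

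The directions $\xi\parallel e_n$ are excluded by the sign constraint (there every $\mu^{(2)}\in\xi^\perp$ has $\mu^{(2)}_n=0$), but they are recovered by continuity: the functions $\xi\mapsto\xi_j\widehat A_k(\xi)-\xi_k\widehat A_j(\xi)$ are continuous and vanish on the dense open set $\{\xi\ne0:\xi\not\parallel e_n\}$, hence vanish identically. With the above convention one has $\widehat{\p_jA_k-\p_kA_j}(\xi)=-i\bigl(\xi_j\widehat A_k(\xi)-\xi_k\widehat A_j(\xi)\bigr)=0$, so the distribution $\p_jA_k-\p_kA_j$, computed from the extension of $A$ by zero on $\R^n$, vanishes on all of $\R^n$; restricting to the open set $\Sigma\cap B$ yields \eqref{eq_recover_rot_1}.

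The main obstacle is precisely the positivity condition $\mu^{(2)}_n>0$ inherited from the reflection argument, which is what forced the cross terms in \eqref{eq_pha} to decay: it forbids a direct use of \eqref{eq_recover_rot} for frequencies $\xi$ in the $e_n$-direction and thins out the supply of usable $\mu^{(1)}$. The key point, in the spirit of \cite{Salo_diss}, is that analyticity of $\widehat A$ makes these missing directions harmless—density of the admissible frequency vectors together with continuity of $\widehat A$ suffices to reconstruct the entire magnetic field.
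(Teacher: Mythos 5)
Your proposal is correct, and it reaches the conclusion by a genuinely different mechanism than the paper. The paper argues componentwise: for each pair $(j,k)$ it writes down explicit vectors $\mu^{(1)}(\xi,j,k)=-\xi_k e_j+\xi_j e_k$ and companion vectors $\mu^{(2)}(\xi,j,k)$ (treating the cases $j,k<n$ and $k=n$ separately, the latter using $n\ge 3$), checks the orthogonality and sign conditions by hand for $\xi$ in the open positive orthant $\{\xi_1>0,\dots,\xi_n>0\}$, and then — since that orthant is far from dense — invokes real-analyticity of the Fourier transform of the compactly supported difference to propagate the identity $\xi_j\widehat A_k(\xi)-\xi_k\widehat A_j(\xi)=0$ to all of $\R^n$. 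You instead fix a frequency $\xi\not\parallel e_n$ and characterize exactly which directions $\mu^{(1)}\in\xi^\perp$ admit a legal companion $\mu^{(2)}$ (namely $\mu^{(1)}\notin\operatorname{span}\{\xi,e_n\}$, via the projection criterion $e_n\notin\operatorname{span}\{\xi,\mu^{(1)}\}$); since the excluded set is a one-dimensional subspace of the $(n-1)$-dimensional space $\xi^\perp$, density plus linearity of $\mu\mapsto\mu\cdot\widehat A(\xi)$ yields $\widehat A(\xi)\in\C\xi$, i.e.\ all curl components vanish at once, and the leftover line $\xi\parallel e_n$ is recovered by mere continuity because your good set of frequencies is open and dense. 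What each approach buys: the paper's explicit vectors are elementary to verify and are recycled (with modifications) in the later Proposition on the reflected potentials, while your argument is more conceptual, quantifies precisely how little the constraint $\mu^{(2)}_n>0$ costs (a line of frequencies and a great circle's worth of directions), and replaces the analyticity step by a weaker continuity argument. All the individual claims in your write-up (the projection criterion, the equivalence with $\mu^{(1)}\notin\operatorname{span}\{\xi,e_n\}$ for $\xi\not\parallel e_n$, the dimension count, and the final distributional restriction to $\Sigma\cap B$) check out.
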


\begin{proof}
It follows from 
\eqref{eq_recover_rot}   that
\begin{equation}
\label{eq_recover_rot_2}
\mu^{(1)}\cdot (\hat{A^{(1)}\chi_{\Sigma\cap B}}(\xi)-\hat{A^{(2)}\chi_{\Sigma\cap B}}(\xi))=0, 
\end{equation}
where $\chi_{\Sigma\cap B}$ is the characteristic function of the set $\Sigma\cap B$ and $\hat{A^{(j)}\chi_{\Sigma\cap B}}$ stands for the Fourier transform of $A^{(j)}\chi_{\Sigma\cap B}$.  

Let $\xi=(\xi_1,\dots,\xi_n)$, $\xi_j>0$, $j=1,\dots,n$,  and let
\[
\mu^{(1)}(\xi,j,k)=-\xi_k e_j+ \xi_j e_k, \ 1\le j,k\le n,  \ j\ne k,
\]
where $e_1,\dots, e_n$ is the standard orthonormal basis in $\R^n$. 
Then $\mu^{(1)}(\xi,j,k)\cdot\xi=0$. If  $j,k$ are such that $1\le j,k<n$, $j\ne k$, we set 
\[
\mu^{(2)}(\xi,j,k)= -\xi_j\xi_n e_j- \xi_k\xi_n e_k +(\xi_j^2+\xi_k^2)e_n. 
\]
If $k=n$ and $j$ is such that $1\le j<n$, we define
\[
\mu^{(2)}(\xi,j,n)=
 (-\xi_j^2-\xi_n^2)e_l+\xi_l\xi_j e_j + \xi_l\xi_ne_n, 
\]
with some $l\ne j,n$, which exists, since $n\ge 3$. 
 In all cases, we have $\xi\cdot\mu^{(2)}(\xi,j,k)=0$, $\mu^{(1)}(\xi,j,k)\cdot\mu^{(2)}(\xi,j,k)=0$, and $\mu^{(2)}_n(\xi,j,k)>0$. 

Hence, for the vectors $\mu^{(1)}(\xi,j,k)$ and $\xi$, \eqref{eq_recover_rot_2} holds, and it yields that
\[
\xi_j\cdot (\hat{A_k^{(2)}\chi_{\Sigma\cap B}}(\xi)-\hat{A_k^{(1)}\chi_{\Sigma\cap B}}(\xi))-\xi_k\cdot (\hat{A_j^{(2)}\chi_{\Sigma\cap B}}(\xi)-\hat{A_j^{(1)}\chi_{\Sigma\cap B}}(\xi))=0, 
\]
$1\le j,k\le n$, $j\ne k$, for all $\xi\in\R^n$, $\xi_1>0,\dots,\xi_n>0$, and thus, everywhere by the  analyticity of the Fourier transform. 
This proves \eqref{eq_recover_rot_1}.  
\end{proof}

By Proposition \ref{prop_33-}, we have  $dA^{(1)}=dA^{(2)}$ in $\Sigma$.  Since $\Sigma$ is simply connected, there exists $\Psi\in C^{1,1}(\overline{\Sigma})$ with compact support such that 
\begin{equation}
\label{eq_A_1-A_2_nabla}
A^{(1)}-A^{(2)}=\nabla \Psi\quad \textrm{in}\ \Sigma. 
\end{equation}

In particular, $\Psi=0$ along $\p B\cap \Sigma$. 
The next step is to show that $\Psi$ vanishes along the boundary of $\Sigma$.  To this end, substituting \eqref{eq_A_1-A_2_nabla} and $\xi=0$ into 
\eqref{eq_identity_main_2_new_zero}, we get
\begin{equation}
\label{eq_int_m}
(i\mu^{(1)}+\mu^{(2)})\cdot \int_{\Sigma\cap B} (\nabla \Psi) g(x)dx=0,
\end{equation}
where $g\in C^\infty(\overline{\Sigma\cap B})$ is an arbitrary function such that $(i\mu^{(1)}+\mu^{(2)})\cdotÊ\nabla g=0$ in $\Sigma\cap B$. 
We may replace $\mu^{(1)}$ by $-\mu^{(1)}$ in \eqref{eq_int_m}, and  passing to the variables $y$ as in the proof of Proposition \ref{prop_32}, we have
\[
\int_{T(\Sigma\cap B)} g_1(y)\p_{\bar z} \Psi dy=0,\quad \int_{T(\Sigma\cap B)} g_2(y)\p_{z} \Psi dy=0,
\]
where $\p_{\bar z}g_1=0$ and $\p_z g_2=0$. Taking $g_j(y)=g_j'(z)\otimes g_j''(y'')$, $j=1,2$, $y''=(y_3,\dots,y_n)$, and varying $g_j''$ leads to
\[
\int_{T_{y''}} g_1'(z)\p_{\bar z} \Psi d\bar z\wedge dz=0,\quad \int_{T_{y''}} g_2'(z)\p_{z} \Psi dz\wedge d\bar z=0,
\]
where $\p_{\bar z}g_1'=0$ and $\p_z g_2'=0$. Using Stokes' theorem, we get
\[
\int_{\p T_{y''}} g_1'(z) \Psi dz=0,\quad \int_{\p T_{y''}} g_2'(z) \Psi  d\bar z=0. 
\]
In particular, taking $g_2'=\overline{g_1'}$, we have
\[
\int_{\p T_{y''}} \overline{g_1'} \Psi  d\bar z=0,\quad \textrm{and therefore},\quad \int_{\p T_{y''}} g_1' \overline{\Psi}  d z=0. 
\]
Hence, 
\[
\int_{\p T_{y''}} g_1'(z)\Re \Psi  dz=0,\quad \int_{\p T_{y''}} g_1'(z)\Im \Psi  dz=0,
\]
for any holomorphic function $g_1'\in C^\infty(\overline{T_{y''}})$.   Arguing again as in \cite[Lemma 5.1]{DKSU_2007}, we conclude that there exist holomorphic functions $F_j\in C(\overline{T_{y''}})$, $j=1,2$, such that
\[
F_1|_{\p T_{y''}}=\Re \Psi|_{\p T_{y''}},\quad F_2|_{\p T_{y''}}=\Im \Psi|_{\p T_{y''}}. 
\]  
Furthermore, we have
$\Delta\Im F_j=0$ in $T_{y''}$ and $\Im F_j|_{\p T_{y''}}=0$.  Thus, $F_j$ are real-valued and therefore, constant on the connected set $T_{y''}$. Hence, $\Psi$ is constant along $\p T_{y''}$.

Going back to the $x$-coordinates, we conclude that the function $\Psi(x)$ is constant  along  the boundary of the section $T^{-1}(T_{y''})=(\Sigma\cap B)\cap T^{-1}(\Pi_{y''})$,  for all $y''\in \R^{n-2}$, where the two-dimensional plane $T^{-1}(\Pi_{y''})$ is given by
\[
T^{-1}(\Pi_{y''})=\bigg\{x=y_1\mu^{(2)}+y_2\mu^{(1)}+\sum_{j=3}^ny_j\mu^{(j)}:y_1,y_2\in \R,y''=(y_3,\dots,y_n)\bigg\}. 
\]
Here $\mu^{(1)},\mu^{(2)}\in \R^n$ are such that $\mu^{(1)}\cdot\mu^{(2)}=0$, $|\mu^{(1)}|=|\mu^{(2)}|=1$, and $\mu^{(2)}_n>0$.  
Choosing the two-dimensional planes $T^{-1}(\Pi_{y''})$ with $\mu^{(2)}=e_n$, and  $\mu^{(1)}=e_j$, $j=1,\dots,n-1$, and varying $y''$, we conclude that $\Psi$ vanishes along $\p (\Sigma\cap B)$. We refer to \cite{DKSU_2007, KrupLassasUhlmann}
for a detailed discussion in the context of a  general bounded domain.

In order to prove that $q^{(1)}=q^{(2)}$, we may and shall assume that $A^{(1)}=A^{(2)}$. Indeed, as $\Psi$ vanishes along $\Sigma$, it follows from \eqref{eq_A_1-A_2_nabla} and \eqref{eq_gauge} that 
\[
\mathcal{N}_{A^{(1)},q^{(2)}}=\mathcal{N}_{A^{(2)}+\nabla\Psi,q^{(2)}}=\mathcal{N}_{A^{(2)},q^{(2)}},
\] 
and therefore,
\[
\mathcal{N}_{A^{(1)},q^{(1)}}(f)|_{\gamma_2}=\mathcal{N}_{A^{(1)},q^{(2)}}(f)|_{\gamma_2},
\]
for any $f\in H^{3/2}(\Gamma_1)$, $\supp (f)\subset\gamma_1$. 
Substituting $A^{(1)}=A^{(2)}$ in \eqref{eq_identity_main}, we get
\begin{equation}
\label{eq_recovering_q}
\int_{\Sigma\cap B}(q^{(1)}-q^{(2)})u_1\overline{u_2}dx=0. 
\end{equation}
Choosing in \eqref{eq_recovering_q} $u_1$ and $u_2$ being complex geometric optics solutions, given by \eqref{eq_u_1-CGO_1} and \eqref{eq_u_2-CGO_2}, and letting $h\to +0$, we have
\begin{equation}
\label{eq_recovering_q_2}
\int_{\Sigma\cap B}(q^{(1)}-q^{(2)})e^{ix\cdot\xi}e^{\Phi_1^{(0)}(x)+\overline{\Phi_2^{(0)}(x)}}dx=0.
\end{equation}
As before,  notice that \eqref{eq_amplitude_1} implies that in the expression \eqref{eq_u_1-CGO_1}  for $u_1$, we may replace $e^{\Phi_1}$ by $ge^{\Phi_1}$ if $g\in C^\infty(\overline{(\Sigma\cap  B)\cup (\Sigma\cap B)_0^*})$ is a solution of 
\begin{equation}
\label{eq_g_recov}
(i\mu^{(1)}+\mu^{(2)})\cdot \nabla g=0\quad  \textrm{in}\quad (\Sigma\cap  B)\cup (\Sigma\cap B)_0^*.  
\end{equation}
Then \eqref{eq_recovering_q_2} can be replaced by 
\[
\int_{\Sigma\cap B}(q^{(1)}-q^{(2)})g(x)e^{ix\cdot\xi}e^{\Phi_1^{(0)}(x)+\overline{\Phi_2^{(0)}(x)}}dx=0.
\]
Furthermore, \eqref{eq_amplitude_sum} has the form,
\[
(i\mu^{(1)}+\mu^{(2)})\cdot\nabla (\Phi_1^{(0)}+\overline{\Phi_2^{(0)}})=0\quad \textrm{in}\quad \Sigma\cap B. 
\]
Hence, taking $g=e^{-(\Phi_1^{(0)}+\overline{\Phi_2^{(0)}})}$, we get
\[
\int_{\Sigma\cap B}(q^{(1)}-q^{(2)})e^{ix\cdot\xi}dx=0,
\]
for all $\xi\in \R^n$ such that there exist $\mu^{(1)},\mu^{(2)}\in\R^n$, satisfying
\begin{equation}
\label{eq_vect_2}
\xi\cdot \mu^{(1)}=\xi\cdot\mu^{(2)}=\mu^{(1)}\cdot\mu^{(2)}=0,\ |\mu^{(1)}|=|\mu^{(2)}|=1, \ \mu^{(2)}_n>0.
\end{equation}
Let  $\xi=(\xi',\xi_{n-1},\xi_n)\in \R^n$, $\xi'\in\R^{n-2}$, be an arbitrary vector. Then assuming that  $\xi_{n-1}\ne 0$, consider the vector 
\begin{align*}
\tilde{\mu^{(2)}}=\bigg(0_{\R^{n-2}},\frac{-\xi_{n}}{\xi_{n-1}}, 1\bigg),\
\mu^{(2)}=\tilde{\mu^{(2)}}/|\tilde{\mu^{(2)}}|.
\end{align*}
Since $n\ge 3$, there exists $\mu^{(1)}\in\R^n$ such that \eqref{eq_vect_2} holds. Thus, $\hat{q^{(1)}}(\xi)=\hat{q^{(2)}}(\xi)$ for all $\xi\in\R^n$ such that $\xi_{n-1}\ne 0$, and therefore, by continuity of the Fourier transform, for all $\xi\in \R^n$. Hence,  $q^{(1)}=q^{(2)}$ in $\Sigma\cap B$. 
This completes the proof of Theorem \ref{thm_main_1}.

\section{Proof of Theorem \ref{thm_main_2}}

First,  arguing as in the proof of Theorem \ref{thm_main_1}, we obtain the identity \eqref{eq_identity_main}, which  is valid for any $u_1\in W_{l_2}(\Sigma\cap  B)$ and $u_2\in V_{l_2}(\Sigma\cap B)$.

Next we shall construct complex geometric optics solutions, vanishing on $l_2$, using the same choice of complex frequencies $\zeta_1$
and $\zeta_2$, defined in \eqref{eq_zeta_1_2}. The solution $u_1$ will be constructed precisely in the same way as in Theorem \ref{thm_main_1} and it is given by \eqref{eq_u_1-CGO}, see also \eqref{eq_u_1-CGO_1}. 

When constructing $u_2$, we proceed as in the definition of $u_1$ by reflecting the coefficients across the plane $x_n=0$. 
For the coefficients  $A^{(2)}_j$, $j=1,\dots,n-1$, and $q^{(2)}$, we do the even extension, and  for $A^{(2)}_n$, we do the odd extension, 
\begin{align*}
\tilde A^{(2)}_j(x)&=\begin{cases} A^{(2)}_j(x',x_n),& 0< x_n< L,\\
 A^{(2)}_j(x',-x_n),& -L<x_n<0,
\end{cases},\quad j=1,\dots, n-1,\\
\tilde A^{(2)}_n(x)&=\begin{cases} A^{(2)}_n(x',x_n),& 0< x_n<L,\\
- A^{(2)}_n(x',-x_n),& -L< x_n<0,
\end{cases}\\
\tilde q^{(2)}(x)&=\begin{cases} q^{(2)}(x',x_n),& 0< x_n<L,\\
 q^{(2)}(x',-x_n),&-L< x_n<0.
\end{cases}
\end{align*}  
As $A^{(2)}_n|_{x_n=0}=0$,  we have  $\tilde A^{(2)}\in W^{1,\infty}((\Sigma\cap  B)\cup (\Sigma\cap  B)_0^*)$ and $\tilde q^{(2)}\in L^{\infty}((\Sigma\cap  B)\cup (\Sigma\cap B)_0^*)$. 
Then by Proposition \ref{prop_CGO_Lip} and Remark \ref{rem_com_geom_1},  one can construct complex geometric optics solutions,
\[
\tilde u_2(x,\zeta_2;h)=e^{x\cdot \zeta_2/h} (e^{\Phi_2(x,i\mu^{(1)}-\mu^{(2)};h)}+r_2(x,\zeta_2; h))\in H^2((\Sigma\cap  B)\cup (\Sigma\cap  B)_0^*)
\]
of the equation $(\mathcal{L}_{\overline{\tilde A^{(2)}},\overline{\tilde q^{(2)}}}-k^2)u_2=0$ in $(\Sigma\cap  B)\cup (\Sigma\cap  B)_0^*$, where 
\[
\|r_2\|_{H^1_{\textrm{scl}}((\Sigma\cap  B)\cup (\Sigma\cap  B)_0^*)}=\mathcal{O}(h^{1/3}),
\]
 and $\Phi_2\in C^{\infty}(\overline{(\Sigma\cap  B)\cup (\Sigma\cap  B)_0^*})$ satisfying
\begin{equation}
\label{eq_amplitude_2_l_2}
(i\mu^{(1)}-\mu^{(2)})\cdot \nabla \Phi_2+ i (i\mu^{(1)}-\mu^{(2)})\cdot (\overline{\tilde A^{(2)}})^\sharp=0\quad \textrm{in}\quad (\Sigma\cap  B)\cup (\Sigma\cap  B)_0^*,
\end{equation}
\[
\|\p^\alpha e^{\Phi_2}\|_{L^\infty((\Sigma\cap  B)\cup (\Sigma\cap  B)_0^*)}\le C_\alpha h^{-|\alpha|/3},\quad |\alpha|\ge 0.
\]
By Remark \ref{rem_CGO_Lip}, $\Phi_2(x,i\mu^{(1)}-\mu^{(2)};h)\to \Phi_2^{(0)}(x,i\mu^{(1)}-\mu^{(2)})$ in the $L^\infty$-norm as $h\to 0$, where $\Phi_2^{(0)}$ solves the equation
\[
(i\mu^{(1)}-\mu^{(2)})\cdot \nabla \Phi_2^{(0)}+ i (i\mu^{(1)}-\mu^{(2)})\cdot \overline{\tilde A^{(2)}}=0\quad \textrm{in}\quad (\Sigma\cap  B)\cup (\Sigma\cap  B)_0^*. 
\]
Let 
\[
u_2(x)=\tilde u_2(x',x_n)-\tilde u_2(x',-x_n),\quad x\in \Sigma\cap B.
\]
Then $u_2\in V_{l_2}(\Sigma\cap B)$. 
It will be convenient to have following explicit expression for $u_2$,
\begin{equation}
\label{eq_u_2-CGO_2_l_2}
u_2(x)=e^{x\cdot\zeta_2/h}(e^{\Phi_2(x)}+r_2(x))-e^{(x',-x_n)\cdot\zeta_2/h}(e^{\Phi_2(x',-x_n)}+r_2(x',-x_n)).
\end{equation}

The next step is to substitute complex geometric optics solutions $u_1$ and $u_2$, given by \eqref{eq_u_1-CGO_1} and \eqref{eq_u_2-CGO_2_l_2},  into \eqref{eq_identity_main}. 
To this end, we  first analyze the phases of the products of the complex geometric optics solutions,     
\begin{align*}
e^{x\cdot\zeta_1/h}e^{x\cdot\overline{\zeta_2}/h}&=e^{ix\cdot\xi},\quad e^{(x',-x_n)\cdot\zeta_1/h}e^{(x',-x_n)\cdot\overline{\zeta_2}/h}=e^{i(x',-x_n)\cdot\xi},\\
e^{(x',-x_n)\cdot\zeta_1/h}e^{x\cdot\overline{\zeta_2}/h}&=e^{ix'\cdot\xi'-\frac{2i}{h}\sqrt{1-\frac{h^2|\xi|^2}{4}}\mu^{(1)}_nx_n-2\mu^{(2)}_nx_n/h}=e^{ix\cdot \tilde \xi_--2\mu^{(2)}_nx_n/h},\\
e^{x\cdot\zeta_1/h}e^{(x',-x_n)\cdot\overline{\zeta_2}/h}&= e^{ix'\cdot\xi'+\frac{2i}{h}\sqrt{1-\frac{h^2|\xi|^2}{4}}\mu^{(1)}_nx_n+2\mu^{(2)}_nx_n/h}=e^{ix\cdot \tilde \xi_++2\mu^{(2)}_nx_n/h},
\end{align*}
where 
\[
\tilde\xi_\pm=\bigg(\xi',\pm \frac{2}{h}\sqrt{1-\frac{h^2|\xi|^2}{4}}\mu^{(1)}_n\bigg).
\]
We assume further that $\mu^{(2)}_n=0$ and $\mu^{(1)}_n\ne 0$. Then $ |\tilde\xi_\pm|\to \infty$ as $h\to 0$. We have
\begin{equation}
\label{eq_intergals_comp}
\begin{aligned}
&\int_{\Sigma\cap B} \zeta_1\cdot (A^{(1)}-A^{(2)})  e^{x\cdot\zeta_1/h}e^{(x',-x_n)\cdot\overline{\zeta_2}/h} e^{\Phi_1(x)}e^{\overline{\Phi_2(x',-x_n)}}dx\\
&=
\zeta_1\cdot \int_{\Sigma\cap B}  (A^{(1)}-A^{(2)}) e^{ix\cdot \tilde \xi_+}  e^{\Phi_1^{(0)}(x)+\overline{\Phi^{(0)}_2(x',-x_n)}}dx \\
&+ \zeta_1\cdot \int_{\Sigma\cap B}  (A^{(1)}-A^{(2)}) e^{ix\cdot \tilde \xi_+}  (e^{\Phi_1(x)+\overline{\Phi_2(x',-x_n)}}-
e^{\Phi_1^{(0)}(x)+\overline{\Phi^{(0)}_2(x',-x_n)}})dx
\to 0, 
\end{aligned}
\end{equation}
as $h\to 0$. Here the first integral in the right hand side of \eqref{eq_intergals_comp} goes to zero as $h\to 0$ by the Riemann-Lebesgue lemma, and the second one goes to zero, since $\|\Phi_j-\Phi_j^{(0)}\|_{L^\infty}\to 0$, as $h\to 0$. 
Therefore, multiplying \eqref{eq_identity_main} by $h$ and letting $h\to 0$, we get
\begin{align*}
\lim_{h\to 0}&\bigg(\int_{\Sigma\cap B} (A^{(1)}-A^{(2)})\cdot (-i\zeta_1+i\overline{\zeta_2})e^{ix\cdot \xi}e^{\Phi_1(x)+\overline{\Phi_2(x)}}dx\\
&+
\int_{\Sigma\cap B} (A^{(1)}-A^{(2)})\cdot (-i\zeta_1^*+i\overline{\zeta_2^*})e^{i(x',-x_n)\cdot \xi}e^{\Phi_1(x',-x_n)+\overline{\Phi_2(x',-x_n)}}dx
\bigg)=0.
\end{align*}
This implies that
\begin{align*}
(i\mu^{(1)}&+\mu^{(2)})\cdot \int_{\Sigma\cap B} (A^{(1)}-A^{(2)})e^{ix\cdot\xi}e^{\Phi_1^{(0)}(x)+\overline{\Phi_2^{(0)}(x)}}dx\\
&+(i(\mu^{(1)})'+(\mu^{(2)})', -(i\mu^{(1)}_n+\mu^{(2)}_n))\\
&\cdot \int_{\Sigma\cap B} (A^{(1)}-A^{(2)})e^{i(x',-x_n)\cdot \xi}e^{\Phi_1^{(0)}(x',-x_n)+\overline{\Phi_2^{(0)}(x',-x_n)}}dx
=0.
\end{align*}
Making a change of variables, we get
\[
(i\mu^{(1)}+\mu^{(2)})\cdot \int_{(\Sigma\cap  B)\cup (\Sigma\cap  B)_0^*} (\tilde A^{(1)}-\tilde A^{(2)})e^{ix\cdot\xi}e^{\Phi_1^{(0)}(x)+\overline{\Phi_2^{(0)}(x)}}dx=0, 
\]
where $\mu^{(2)}_n=0$ and $\mu^{(1)}_n\ne 0$.  
At this point, we can repeat the arguments, used in the proof of Proposition \ref{prop_32}, and conclude that
\begin{equation}
\label{eq_plane_1}
(i\mu^{(1)}+\mu^{(2)})\cdot \int_{(\Sigma\cap  B)\cup (\Sigma\cap  B)_0^*} (\tilde A^{(1)}-\tilde A^{(2)})e^{ix\cdot\xi}dx=0, 
\end{equation}
for all $\xi,\mu^{(1)},\mu^{(2)}\in\R^n$ such that 
\begin{equation}
\label{eq_vect_1}
\xi\cdot \mu^{(1)}=\xi\cdot\mu^{(2)}=\mu^{(1)}\cdot\mu^{(2)}=0,\  |\mu^{(1)}|=|\mu^{(2)}|=1,\ 
\mu^{(2)}_n=0,\ \mu^{(1)}_n\ne 0.
\end{equation}
Replacing the vector $\mu^{(1)}$ by $-\mu^{(1)}$, we obtain that 
\begin{equation}
\label{eq_plane_2}
(-i\mu^{(1)}+\mu^{(2)})\cdot \int_{(\Sigma\cap  B)\cup (\Sigma\cap  B)_0^*} (\tilde A^{(1)}-\tilde A^{(2)})e^{ix\cdot\xi}dx=0. 
\end{equation}
Hence, \eqref{eq_plane_1} and \eqref{eq_plane_2} imply that
\begin{equation}
\label{eq_plane_3}
\mu\cdot \int_{(\Sigma\cap  B)\cup (\Sigma\cap  B)_0^*} (\tilde A^{(1)}-\tilde A^{(2)})e^{ix\cdot\xi}dx=0,
\end{equation}
for all $\mu\in \textrm{span}\{\mu^{(1)},\mu^{(2)}\}$ and all $\xi\in \R^{n}$ such that  \eqref{eq_vect_1} holds. 

We need the following result. 

\begin{prop}
\label{prop_curl_thm_2}

We have
\[
\p_j(\tilde A_k^{(1)}- \tilde A_k^{(2)})-\p_k( \tilde A_j^{(1)}- \tilde A_j^{(2)})=0\quad\textrm{in }(\Sigma\cap  B)\cup (\Sigma\cap  B)_0^*,\quad 1\le j,k\le n.
\]
\end{prop}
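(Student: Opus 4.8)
The plan is to read \eqref{eq_plane_3} as a statement about the Fourier transform of $\tilde A^{(1)}-\tilde A^{(2)}$ and to show that this transform is \emph{parallel to} $\xi$ at every frequency $\xi$, which makes the vanishing of the curl immediate. Write $\tilde A=\tilde A^{(1)}-\tilde A^{(2)}$ and $\Omega_*=(\Sigma\cap B)\cup (\Sigma\cap B)_0^*$, and set $\widehat{\tilde A}(\xi)=\int_{\Omega_*}\tilde A(x)e^{ix\cdot\xi}\,dx$ for its componentwise Fourier transform; since $\tilde A\in W^{1,\infty}$ is compactly supported in $\Omega_*$, each $\widehat{\tilde A_j}$ is an entire function of $\xi$. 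In this notation \eqref{eq_plane_3} reads
\[
\mu\cdot\widehat{\tilde A}(\xi)=0\quad\textrm{for all }\mu\in\textrm{span}\{\mu^{(1)},\mu^{(2)}\},
\]
whenever $\xi,\mu^{(1)},\mu^{(2)}$ satisfy \eqref{eq_vect_1}.

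Fix $\xi\in\R^n$ with $\xi'=(\xi_1,\dots,\xi_{n-1})\ne0$, and consider the $(n-2)$-dimensional space $V_\xi=\{v\in\R^n:v_n=0,\ v\cdot\xi=0\}\subset\xi^\perp$ (nontrivial since $n\ge3$). First I claim $\widehat{\tilde A}(\xi)\perp V_\xi$. Indeed, given any unit $\mu^{(2)}\in V_\xi$, the orthogonal complement of $\textrm{span}\{\xi,\mu^{(2)}\}$ is $(n-2)$-dimensional and is not contained in $\{x_n=0\}$: otherwise $e_n\in\textrm{span}\{\xi,\mu^{(2)}\}$, and pairing $e_n=a\xi+b\mu^{(2)}$ with $\mu^{(2)}$ gives $b=0$, hence $\xi\parallel e_n$, contradicting $\xi'\ne0$. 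Thus there is a unit $\mu^{(1)}\perp\xi,\mu^{(2)}$ with $\mu^{(1)}_n\ne0$, so $(\xi,\mu^{(1)},\mu^{(2)})$ obeys \eqref{eq_vect_1} and $\mu=\mu^{(2)}$ is admissible above. Since $\mu^{(2)}$ was an arbitrary unit vector of $V_\xi$, this gives $\widehat{\tilde A}(\xi)\perp V_\xi$, i.e. $\widehat{\tilde A}(\xi)\in V_\xi^\perp=\textrm{span}\{e_n,\xi\}$.

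Write accordingly $\widehat{\tilde A}(\xi)=\alpha e_n+\beta\xi$ with $\alpha=\alpha(\xi),\beta=\beta(\xi)\in\C$. Taking $\mu=\mu^{(1)}$ above and using $\mu^{(1)}\cdot\xi=0$, we get $0=\mu^{(1)}\cdot\widehat{\tilde A}(\xi)=\alpha\,\mu^{(1)}_n$, and $\mu^{(1)}_n\ne0$ forces $\alpha=0$. Hence $\widehat{\tilde A}(\xi)=\beta(\xi)\,\xi$ for every $\xi$ with $\xi'\ne0$, so
\[
\xi_j\widehat{\tilde A_k}(\xi)-\xi_k\widehat{\tilde A_j}(\xi)=\beta(\xi)\,(\xi_j\xi_k-\xi_k\xi_j)=0,\quad 1\le j,k\le n,
\]
for all such $\xi$. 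The left-hand side is entire in $\xi$ and vanishes on the open set $\{\xi'\ne0\}$, hence identically; taking inverse Fourier transforms gives $\p_j(\tilde A_k^{(1)}-\tilde A_k^{(2)})-\p_k(\tilde A_j^{(1)}-\tilde A_j^{(2)})=0$ on $\Omega_*$, as claimed.

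The main obstacle is the geometric availability of admissible frequency vectors. Because \eqref{eq_vect_1} imposes $\mu^{(2)}_n=0$ and $\mu^{(1)}_n\ne0$, one cannot, as in the proof of Proposition \ref{prop_33-}, simply align $\mu^{(1)}$ with the curl direction $-\xi_ke_j+\xi_je_k$, since that vector lies in $\{x_n=0\}$ when $j,k<n$. The argument above circumvents this by exploiting the \emph{entire family} of admissible $\mu^{(2)}\in V_\xi$ to confine $\widehat{\tilde A}(\xi)$ to the plane $\textrm{span}\{e_n,\xi\}$, and only afterwards using a single $\mu^{(1)}$ with $\mu^{(1)}_n\ne0$ to kill the $e_n$-component; the dimension count making both steps possible is exactly where the hypothesis $n\ge3$ enters.
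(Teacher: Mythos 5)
Your proof is correct, and it takes a genuinely different route from the paper's. The paper splits into cases: for $n=3$ it chooses one explicit admissible pair $(\mu^{(1)},\mu^{(2)})$, decomposes $v(\xi)$ into components parallel and perpendicular to $\xi$, and uses that $\mathrm{span}\{\mu^{(1)},\mu^{(2)}\}$ exhausts $\xi^\perp$ in three dimensions to conclude $v(\xi)=\alpha(\xi)\xi$; for $n\ge 4$ it abandons this and instead constructs, for each pair of indices, explicit coordinate vectors $\mu^{(1)}(\xi,j,k)$, $\mu^{(2)}(\xi,j,k)$ aligned with the curl direction $-\xi_k e_j+\xi_j e_k$, with a separate construction for the components involving $e_n$ that requires four distinct indices (this is exactly where the paper uses $n\ge 4$). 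You instead give a single argument valid for all $n\ge 3$: by letting $\mu^{(2)}$ range over the whole $(n-2)$-dimensional space $V_\xi=\{v:v_n=0,\ v\cdot\xi=0\}$ (checking each time that a companion $\mu^{(1)}$ with $\mu^{(1)}_n\ne 0$ exists — your linear-algebra argument for this is sound), you confine $\widehat{\tilde A}(\xi)$ to $\mathrm{span}\{e_n,\xi\}$, and then one application with $\mu=\mu^{(1)}$ kills the $e_n$-component, yielding $\widehat{\tilde A}(\xi)=\beta(\xi)\xi$ on $\{\xi'\ne 0\}$; the curl identities and the analyticity argument then finish exactly as in the paper. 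In effect you have promoted the paper's $n=3$ strategy to all dimensions, which buys uniformity (no case split, no index bookkeeping) and the cleaner intermediate statement that the Fourier transform is everywhere parallel to $\xi$; what the paper's computational route buys is that each component identity \eqref{eq_four_n4}, \eqref{eq_four_n4_2} is read off directly from a single explicit choice of vectors, with no need to quantify over a continuum of admissible $\mu^{(2)}$.
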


\begin{proof}

Let first $n=3$. Then for any vector $\xi\in\R^3$ such that $\xi_{1}^2+\xi_{2}^2>0$, the vectors
\begin{align*}
\mu^{(2)}&=\bigg(\frac{-\xi_2}{\sqrt{\xi_{1}^2+\xi_{2}^2}}, \frac{\xi_1}{\sqrt{\xi_{1}^2+\xi_{2}^2}},0\bigg),\\
\tilde \mu^{(1)}&=(-\xi_1\xi_3,-\xi_2\xi_3,\xi_{1}^2+\xi_{2}^2),\quad \mu^{(1)}=\tilde \mu^{(1)}/|\tilde \mu^{(1)}|,
\end{align*}
satisfy \eqref{eq_vect_1}. 
Thus, for any vector $\xi\in\R^3$ such that $\xi_{1}^2+\xi_{2}^2>0$,
\eqref{eq_plane_3} says that
\begin{equation}
\label{eq_plane_4}
\mu \cdot  v(\xi)=0, \quad v(\xi):=\hat{\tilde A^{(1)}\chi}(\xi)-\hat{\tilde A^{(2)}\chi}(\xi),
\end{equation}
for all $\mu\in \textrm{span}\{\mu^{(1)},\mu^{(2)}\}$. Here $\chi$ is the characteristic function of the set $(\Sigma\cap  B)\cup (\Sigma\cap  B)_0^*$.  
For any vector $\xi\in \R^n$, we have the following decomposition,
\[
v(\xi)=v_{\xi}(\xi)+v_\perp(\xi),
\] 
where $\Re v_{\xi}(\xi)$, $\Im v_{\xi}(\xi)$ are multiples of $\xi$, and  $\Re v_\perp(\xi)$, $\Im v_\perp(\xi)$ are orthogonal to $\xi$.  Since $n=3$, we have $\Re v_\perp(\xi), \Im v_\perp(\xi)\in \textrm{span}\{\mu^{(1)},\mu^{(2)}\}$, and therefore, it follows from \eqref{eq_plane_4} that $v_\perp(\xi)=0$, 
for all $\xi\in \R^n$ such that  $\xi_{1}^2+\xi_{2}^2>0$. 
Hence, 
\begin{equation}
\label{eq_plane_5}
v(\xi)=\alpha(\xi)\xi. 
\end{equation}
Let $\mu(\xi,j,k):=-\xi_k e_j+\xi_j e_k$, $1\le j,k\le 3$, $j\ne k$. Here $e_j$ is the standard orthonormal basis in $\R^3$. 
Then \eqref{eq_plane_5} implies that
\[
\mu(\xi,j,k)\cdot  v(\xi)=0,
\]
for all $\xi\in \R^n$ such that  $\xi_{1}^2+\xi_{2}^2>0$, and therefore
\[
\xi_j\cdot (\hat{\tilde A_k^{(2)}\chi}(\xi)-\hat{\tilde A_k^{(1)}\chi}(\xi))-\xi_k\cdot (\hat{\tilde A_j^{(2)}\chi}(\xi)-\hat{\tilde A_j^{(1)}\chi}(\xi))=0,\  1\le j,k\le 3,\  j\ne k,
\]
for all $\xi\in \R^3$ such that  $\xi_{1}^2+\xi_{2}^2>0$, and thus, everywhere, by the analyticity of the Fourier transform. This completes the proof in the case $n=3$.

Let $n\ge 4$.  Then for any vector $\xi=(\xi_1,\dots,\xi_n)\in\R^n$, $\xi_l\ne 0$, $l=1,\dots,n$, the vectors 
\[
\mu^{(2)}(\xi,j,k)=-\xi_k e_j+\xi_j e_k,\ \mu^{(1)}(\xi,j,k)=(-\xi_j\xi_n)e_j+(-\xi_k\xi_n)e_k+(\xi_j^2+\xi_k^2)e_n, 
\]
$1\le j,k<n$, $ j\ne k$, satisfy 
$\xi\cdot\mu^{(1)}(\xi,j,k)=\xi\cdot \mu^{(2)}(\xi,j,k)=\mu^{(1)}(\xi,j,k)\cdot\mu^{(2)}(\xi,j,k)=0$,  $\mu^{(2)}_n(\xi,j,k)=0$, and $\mu^{(1)}_n(\xi,j,k)\ne 0$.
Thus, \eqref{eq_plane_3} implies that 
\begin{equation}
\label{eq_four_n4}
\xi_j\cdot (\hat{\tilde A_k^{(2)}\chi}(\xi)-\hat{\tilde A_k^{(1)}\chi}(\xi))-\xi_k\cdot (\hat{\tilde A_j^{(2)}\chi}(\xi)-\hat{\tilde A_j^{(1)}\chi}(\xi))=0,\  1\le j,k< n,\  j\ne k,
\end{equation}
for all $\xi\in\R^n$, $\xi_l\ne 0$, $l=1,2,\dots,n$.

  Let  $\xi=(\xi_1,\dots,\xi_n)\in\R^n$, $\xi_l\ne 0$, $l=1,\dots,n$, and let $1\le j<n$. Choose the indices $k$ and $l$ such that the set $\{j,n,k,l\}$ consists of four distinct elements.   
   Then the vectors 
\[
\mu^{(1)}(\xi,j,n)=-\xi_n e_j+\xi_j e_n,\ \mu^{(2)}(\xi,j,n)=-\xi_ke_l+\xi_le_k, 
\]
satisfy 
$\xi\cdot\mu^{(1)}(\xi,j,n)=\xi\cdot \mu^{(2)}(\xi,j,n)=\mu^{(1)}(\xi,j,n)\cdot\mu^{(2)}(\xi,j,n)=0$,  $\mu^{(2)}_n(\xi,j,n)=0$, and $\mu^{(1)}_n(\xi,j,n)\ne 0$.
Hence, it follows from 
\eqref{eq_plane_3}  that 
\begin{equation}
\label{eq_four_n4_2}
\xi_j\cdot (\hat{\tilde A_n^{(2)}\chi}(\xi)-\hat{\tilde A_n^{(1)}\chi}(\xi))-\xi_n\cdot (\hat{\tilde A_j^{(2)}\chi}(\xi)-\hat{\tilde A_j^{(1)}\chi}(\xi))=0,\  1\le j< n,
\end{equation}
for all $\xi\in\R^n$, $\xi_l\ne 0$, $l=1,2,\dots,n$.  In the case $n\ge 4$, the claim of the proposition follows from 
\eqref{eq_four_n4} and \eqref{eq_four_n4_2}.

\end{proof}

By Proposition \ref{prop_curl_thm_2}, we obtain that $d\tilde A^{(1)}=d \tilde A^{(2)}$ in $(\Sigma\cap  B)\cup (\Sigma\cap  B)_0^*$.  Arguing as in the proof of Theorem \ref{thm_main_1}, we see that  there exists $\Psi\in C^{1,1}(\overline{\Sigma\cup \Sigma_0^*})$ with compact support such that 
\[
\tilde A^{(1)}-\tilde A^{(2)}=\nabla \Psi\quad \textrm{in}\ (\Sigma\cap  B)\cup (\Sigma\cap  B)_0^*,
\]
and $\Psi=0$ along $\p ((\Sigma\cap  B)\cup (\Sigma\cap  B)_0^*)$.  Since in particular $\Psi=0$ on $\Gamma_1$, we have
\[
\mathcal{N}_{A^{(2)}+\nabla\Psi,q^{(2)}}(f)|_{\gamma_1'}=\mathcal{N}_{A^{(2)},q^{(2)}}(f)|_{\gamma_1'},
\] 
for any $f\in H^{3/2}(\Gamma_1)$, $\supp (f)\subset\gamma_1$. Thus,
\[
\mathcal{N}_{A^{(1)},q^{(1)}}(f)|_{\gamma_1'}=\mathcal{N}_{A^{(1)},q^{(2)}}(f)|_{\gamma_1'},
\]
for any $f\in H^{3/2}(\Gamma_1)$, $\supp (f)\subset\gamma_1$. Hence, we may and shall assume that $A^{(1)}=A^{(2)}$.

As for the electric potentials $q^{(1)}$, $q^{(2)}$, continuing to argue as in the proof of Theorem \ref{thm_main_1},  we arrive at 
\begin{equation}
\label{eq_Four_q}
\int_{(\Sigma\cap  B)\cup (\Sigma\cap  B)_0^*}(q^{(1)}-q^{(2)})e^{ix\cdot\xi}dx=0,
\end{equation}
for all $\xi\in \R^n$ such that there exist $\mu^{(1)},\mu^{(2)}\in\R^n$, satisfying 
\eqref{eq_vect_1}.  For any vector $\xi\in\R^n$ such that $\xi_{n-2}^2+\xi_{n-1}^2>0$, 
the vectors
\begin{align*}
\mu^{(2)}&=\bigg(0_{\R^{n-3}},\frac{-\xi_{n-1}}{\sqrt{\xi_{n-2}^2+\xi_{n-1}^2}}, \frac{\xi_{n-2}}{\sqrt{\xi_{n-2}^2+\xi_{n-1}^2}},0\bigg),\\
\tilde{\mu^{(1)}}&=\bigg(0_{\R^{n-3}},\frac{-\xi_n\xi_{n-2}}{\sqrt{\xi_{n-2}^2+\xi_{n-1}^2}}, \frac{-\xi_n\xi_{n-1}}{\sqrt{\xi_{n-2}^2+\xi_{n-1}^2}}, \sqrt{\xi_{n-2}^2+\xi_{n-1}^2}\bigg),\ \mu^{(1)}=\frac{\tilde{\mu^{(1)}}}{|\tilde{\mu^{(1)}}|},
\end{align*}
satisfy \eqref{eq_vect_1}. 
Thus, \eqref{eq_Four_q} holds for all $\xi\in\R^n$ such that $\xi_{n-2}^2+\xi_{n-1}^2>0$, and therefore, by the analyticity of the Fourier transform, for all $\xi\in \R^n$. This completes  the proof of Theorem \ref{thm_main_2}.

\textbf{Remark}. 
We shall finish this section by making a remark concerning inverse problems for the Schr\"odinger equation  in a slab, which arise in optical tomography \cite{O1}. 
In optical  diffusion tomography one   reconstructs the optical material parameters
inside an object by measuring the light transmitted and scattered through the object. 
There,
 a time harmonic diffusion equation is obtained by using an  approximation of the radiative transfer equation.
The so-called photon density  function has the form $\Phi(x,t)=\Re \,(e^{i\omega t}\Phi(x))$, where  $\Phi(x)$ satisfies
the equation
\beq
\label{Diff appr}
-\nabla\,\cdotp ( \kappa(x)\nabla \Phi(x))+\bigg(\mu_a(x)+\frac{i\omega}c\bigg) \Phi(x)=0,
\eeq
where $\kappa(x)=(3\mu_a(x)+3\mu'_s(x))^{-1}$ is the diffusion coefficient, $\mu_a(x)$
 is the absorption coefficient of the medium,
  $\mu_s'(x)$
 is the reduced scattering  coefficient of the medium,
$\omega$ is the frequency, and $c$ is the speed of light. Equation (\ref{Diff appr})
yields
\beq\label{Diff appr2}
-\Delta \Phi(x)-\kappa(x)^{-1}\nabla \kappa(x)\,\cdotp \nabla
\Phi(x)+\kappa(x)^{-1}\bigg(\mu_a(x)+\frac{i\omega}c\bigg) \Phi(x)=0,
\eeq
which is of the form (\ref{eq_Dirichlet_problem}). When the sources are on the upper
boundary hyperplane
of the slab, the function $\Phi$ satisfies on the lower boundary hyperplane
a Robin boundary condition $\Phi+2\mathcal{A}\kappa \p_{\nu}\Phi =0$, where the parameter
$\mathcal{A}(x)$ depends on the properties of the materials on both sides of the lower
boundary hyperplane, see \cite{O3,O4}.
For small values of $\mathcal{A}\kappa$, corresponding to the case when
 scattering  or absorption is high,  this boundary condition
can be approximated by the Dirichlet boundary condition.
For the study of inverse problems in optical tomography on bounded domains, see \cite{O1},
and references therein.  In particular, the first order terms in
(\ref{Diff appr2}) are important in explaining the non-uniqueness encountered in the imaging problems
 in optical tomography, see \cite{O2}.

\section{Remarks on inverse problems on bounded domains}

\subsection{Proof of Theorem \ref{thm_AU}}

Let $\Omega'\subset\subset\Omega$ be a bounded domain with $C^\infty$ smooth boundary such that $\Omega\setminus\overline{\Omega'}$ is connected and $\Omega'$ contains $\supp(A^{(1)}-A^{(2)})$ and $\supp(q^{(1)}-q^{(2)})$. 

Let $u_1\in H^2(\Omega)$ be the solution to the Dirichlet problem,
\begin{align*}
\mathcal{L}_{A^{(1)},q^{(1)}}u_1&=0\quad \textrm{in}\quad \Omega,\\
u_1&=f\quad\textrm{on}\quad \p \Omega,
\end{align*}
for  some $f\in H^{3/2}(\p \Omega)$ such that $\supp(f)\subset \gamma_1$. 
  Let also $v\in H^2(\Omega)$ be the solution of the following problem,
\begin{align*}
\mathcal{L}_{A^{(2)},q^{(2)}}v&=0\quad \textrm{in}\quad \Omega,\\
v&=f\quad\textrm{on}\quad \p \Omega. 
\end{align*}
 Setting $w=v-u_1\in H^1_0(\Omega)\cap H^2(\Omega)$, we get
\begin{equation}
\label{eq_w_bounded}
\begin{aligned}
\mathcal{L}_{A^{(2)},q^{(2)}} w&=(A^{(1)}-A^{(2)})\cdot D u_1+D\cdot ((A^{(1)}-A^{(2)})u_1)\\
&+((A^{(1)})^2-(A^{(2)})^2+q^{(1)}-q^{(2)})u_1\quad \textrm{in }\Omega.
\end{aligned}
\end{equation}
By our assumptions, 
\[
(\p_{\nu}+iA^{(1)}\cdot\nu) u_1|_{\gamma_2}=(\p_\nu+i A^{(2)}\cdot\nu) v|_{\gamma_2},
\]
and since $A^{(1)}=A^{(2)}$ in a neighborhood of $\p \Omega$, we have  $\p_{\nu}w=0$ on $\gamma_2$. 
It follows from \eqref{eq_w_bounded} that $w$ is a solution to
\[
\mathcal{L}_{A^{(2)},q^{(2)}} w=0 \quad \textrm{in}\quad \Omega\setminus{\overline{\Omega'}},
\]
and $w=\p_\nu w=0$ on $\gamma_2$.  As $A^{(2)}\in W^{1,\infty}(\Omega)$, $q\in L^\infty(\Omega)$, and $\Omega\setminus{\overline{\Omega'}}$ is connected, 
 by unique continuation,  we obtain that $w=0$ in $\Omega\setminus{\overline{\Omega'}}$, see \cite[Corollary 1.38]{Choulli_book}. 
Thus,  $w=\p_\nu w=0$ on $\p \Omega'$.  

Let $u_2\in H^2(\Omega')$ be a solution of the equation 
\begin{equation}
\label{eq_u_2_sec3}
\mathcal{L}_{\overline{A^{(2)}},\overline{q^{(2)}}}u_2=0\quad \textrm{in}\quad \Omega'.
\end{equation}
Then using Green's formula,   we have
\begin{align*}
(\mathcal{L}_{A^{(2)},q^{(2)}}w,u_2)_{L^2(\Omega')}&=
(w,(\p_\nu+i\nu\cdot\overline{A^{(2)}})u_2)_{L^2(\p \Omega')}-
((\p_{\nu}+i\nu \cdot A^{(2)})w,u_2)_{L^2(\p \Omega')}\\
&=0.
\end{align*}
This together with \eqref{eq_w_bounded} implies  that
\begin{align*}
\int_{\Omega'} (A^{(1)}-A^{(2)})\cdot ((Du_1)\overline{u_2}+u_1\overline{Du_2})dx-i \int_{\p \Omega'}(A^{(1)}-A^{(2)})\cdot\nu u_1\overline{u_2}dS\\
+\int_{\Omega'} ((A^{(1)})^2-(A^{(2)})^2+q^{(1)}-q^{(2)})u_1\overline{u_2}dx=
0.
\end{align*}
Therefore, since $A^{(1)}=A^{(2)}$ on $\p \Omega'$, we get 
\begin{equation}
\label{eq_identity_main_sec3}
\begin{aligned}
\int_{\Omega'} (A^{(1)}-A^{(2)})\cdot ((Du_1)\overline{u_2}+u_1\overline{Du_2})dx\\
+\int_{\Omega'} ((A^{(1)})^2-(A^{(2)})^2+q^{(1)}-q^{(2)})u_1\overline{u_2}dx=0,
\end{aligned}
\end{equation}
for any $u_2\in H^2(\Omega')$ satisfying \eqref{eq_u_2_sec3} and any $u_1\in W(\Omega)$, where 
\[
W(\Omega)=\{u_1\in H^2(\Omega): \mathcal{L}_{A^{(1)},q^{(1)}}u_1=
0\ \textrm{in }\Omega,\ \supp(u_1|_{\p \Omega})\subset \gamma_1
\}. 
\]
Let 
\[
\tilde W(\Omega')=\{u_1\in H^2(\Omega'):\ \mathcal{L}_{A^{(1)},q^{(1)}}u_1=0\ \textrm{in }\Omega'\}. 
\]

We need the following Runge type approximation result. 

\begin{prop}
\label{prop_Runge_sec3}
The space $W(\Omega)$ is dense in $\tilde W(\Omega')$ in $L^2(\Omega')$-topology.
\end{prop}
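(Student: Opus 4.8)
The plan is to argue exactly as in Proposition~\ref{prop_Runge}, transferring the Hahn--Banach duality scheme to the bounded domain $\Omega$ and to the operator $\mathcal{L}_{A^{(1)},q^{(1)}}$, here with no spectral parameter. By the Hahn--Banach theorem it is enough to prove that if $g\in L^2(\Omega')$ satisfies $\int_{\Omega'} g\overline{u}\,dx=0$ for every $u\in W(\Omega)$, then $\int_{\Omega'} g\overline{v}\,dx=0$ for every $v\in\tilde W(\Omega')$. I first extend $g$ by zero to all of $\Omega$.

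Next I introduce a suitable dual solution. Since assumption (A) holds for $\mathcal{L}_{A^{(1)},q^{(1)}}$, and the $L^2$-adjoint of its Dirichlet realization is the Dirichlet realization of $\mathcal{L}_{\overline{A^{(1)}},\overline{q^{(1)}}}$, whose spectrum is the complex conjugate of that of $\mathcal{L}_{A^{(1)},q^{(1)}}$, assumption (A) also holds for $\mathcal{L}_{\overline{A^{(1)}},\overline{q^{(1)}}}$. Hence I may solve $\mathcal{L}_{\overline{A^{(1)}},\overline{q^{(1)}}}U=g$ in $\Omega$ with $U|_{\p\Omega}=0$, obtaining a unique $U\in H^2(\Omega)\cap H^1_0(\Omega)$.

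The heart of the argument is to push the orthogonality down to the boundary. For $u\in W(\Omega)$, I apply the Green formula \eqref{eq_Green}, with $\mathcal{L}_{\overline{A^{(1)}},\overline{q^{(1)}}}$ acting on $U$ and $\mathcal{L}_{A^{(1)},q^{(1)}}$ on $u$. Using $\mathcal{L}_{A^{(1)},q^{(1)}}u=0$ and $U|_{\p\Omega}=0$, all terms collapse to $\int_\Omega g\overline u\,dx=-\int_{\p\Omega}\p_\nu U\,\overline u\,dS$, so the hypothesis forces $\int_{\p\Omega}\p_\nu U\,\overline u\,dS=0$. Since the Dirichlet trace $u|_{\p\Omega}$ may be taken to be an arbitrary function supported in $\gamma_1$, each such trace being realizable by solving the Dirichlet problem for $\mathcal{L}_{A^{(1)},q^{(1)}}$ by (A), this yields $\p_\nu U|_{\gamma_1}=0$. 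As $g\equiv 0$ on $\Omega\setminus\overline{\Omega'}$, the function $U$ solves $\mathcal{L}_{\overline{A^{(1)}},\overline{q^{(1)}}}U=0$ there with vanishing Cauchy data $U=\p_\nu U=0$ on the open piece $\gamma_1\subset\p\Omega$; as $\Omega\setminus\overline{\Omega'}$ is connected, unique continuation (exactly as used in the proof of Theorem~\ref{thm_AU}) gives $U\equiv 0$ in $\Omega\setminus\overline{\Omega'}$, whence $U=\p_\nu U=0$ on $\p\Omega'$.

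To conclude, for any $v\in\tilde W(\Omega')$ I apply \eqref{eq_Green} on $\Omega'$; using $\mathcal{L}_{A^{(1)},q^{(1)}}v=0$ together with $U=\p_\nu U=0$ on $\p\Omega'$ makes every boundary term vanish, so $\int_{\Omega'} g\overline v\,dx = (\mathcal{L}_{\overline{A^{(1)}},\overline{q^{(1)}}}U,v)_{L^2(\Omega')}=0$, which is the desired orthogonality. The only genuinely delicate point is the unique continuation step for a magnetic Schr\"odinger operator with merely $W^{1,\infty}$ magnetic and $L^\infty$ electric coefficients across the connected collar $\Omega\setminus\overline{\Omega'}$; the solvability of the adjoint Dirichlet problem and the vanishing of the boundary contributions are routine consequences of (A) and the Green formula \eqref{eq_Green}.
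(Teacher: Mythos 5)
Your proposal is correct and follows essentially the same route as the paper's own proof: the Hahn--Banach duality scheme, the zero extension of $g$, the adjoint Dirichlet solution $U$ for $\mathcal{L}_{\overline{A^{(1)}},\overline{q^{(1)}}}$ (using that assumption (A) passes to the adjoint), the Green formula to deduce $\p_\nu U|_{\gamma_1}=0$, unique continuation across the connected set $\Omega\setminus\overline{\Omega'}$ to get vanishing Cauchy data on $\p\Omega'$, and a final application of the Green formula on $\Omega'$. The only difference is that you spell out a couple of points the paper leaves implicit (why (A) holds for the adjoint, and why arbitrary traces supported in $\gamma_1$ are realizable), which is harmless.
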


\begin{proof}
By the Hahn-Banach theorem, we need to show that  for any $g\in L^2(\Omega')$ such that
\[
\int_{\Omega'} g\overline udx=0\quad \textrm{for any } u\in W(\Omega),
\]
we have
\[
\int_{\Omega'} g\overline{v}dx=0\quad \textrm{for any } v\in \tilde W(\Omega'). 
\]
Continue $g$ by zero to $\Omega\setminus\Omega'$ and consider the Dirichlet problem,
\begin{equation}
\label{eq_U_sec3}
\begin{aligned}
\mathcal{L}_{\overline{A^{(1)}},\overline{q^{(1)}}}U&=g\quad \textrm{in}\quad \Omega,\\
U&=0\quad\textrm{on}\quad \p\Omega. 
\end{aligned}
\end{equation} 
As the assumption (A) holds for the operator $\mathcal{L}_{A^{(1)},q^{(1)}}$,  it also holds for the adjoint operator $\mathcal{L}_{\overline{A^{(1)}},\overline{q^{(1)}}}$, and therefore, the problem \eqref{eq_U_sec3} has a unique solution $U\in H^2(\Omega)\cap H^1_0(\Omega)$.  
For any $u\in W(\Omega)$, using the Green formula, we have
\begin{align*}
0&=\int_{\Omega} g\overline udx=\int_{\Omega}(\mathcal{L}_{\overline{A^{(1)}},\overline{q^{(1)}}}U)\overline{u}dx=\int_{\Omega} U\overline{\mathcal{L}_{A^{(1)},q^{(1)}} u}dx\\
&+ \int_{\p \Omega }U\overline{(\p_\nu +i\nu\cdot A^{(1)})u}dS-\int_{\p \Omega}(\p_\nu +i \nu\cdot\overline{A^{(1)}})U\overline{u}dS=-\int_{\p \Omega}\p_\nu U\overline{u}dS.
\end{align*}
Since $u|_{\p \Omega}$ can be an arbitrary smooth function, supported in  $\gamma_1$, we conclude that $\p_\nu U|_{\gamma_1}=0$. 
Hence, $U$ satisfies the equation 
$\mathcal{L}_{\overline{A^{(1)}},\overline{q^{(1)}}} U=0$ in $\Omega\setminus \overline{\Omega'}$, and  $U=\p_\nu U=0$ on $\gamma_1$. 
Thus, by  unique continuation, $U=0$ in $\Omega\setminus \overline{\Omega'}$, and therefore, we have $U=\p_\nu U=0$ on $\p \Omega'$.

For any $v\in \tilde W(\Omega')$, using the Green formula, we get
\begin{align*}
\int_{\Omega'} g\overline{v}dx=\int_{\Omega'}(\mathcal{L}_{\overline{A^{(1)}},\overline{q^{(1)}}}U)\overline{v}dx
=\int_{\Omega'} U\overline{\mathcal{L}_{A^{(1)},q^{(1)}} v}dx\\
+ \int_{\p \Omega'}U\overline{(\p_\nu +i\nu\cdot A^{(1)})v}dS-\int_{\p \Omega'}(\p_\nu +i \nu\cdot\overline{A^{(1)}})U\overline{v}dS=0.
\end{align*}
The proof is complete. 
\end{proof}

Since $A^{(1)}=A^{(2)}$ on $\p \Omega'$,  in the same way as in the proof of Theorem \ref{thm_main_1}, we conclude that an application of  Proposition \ref{prop_Runge_sec3} implies that
\eqref{eq_identity_main_sec3} is valid for any $u_1\in  \tilde W(\Omega')$ and $u_2\in H^2(\Omega')$ satisfying \eqref{eq_u_2_sec3} .

Let $B\subset\R^n$ be an open ball such that $\Omega'\subset\subset B$. Since $A^{(1)}=A^{(2)}$ and $q^{(1)}=q^{(2)}$ on $\p \Omega'$, we can extend $A^{(j)}$ and $q^{(j)}$ to $B$ so that the extensions, which we shall denote with by same letters, agree on $B\setminus\Omega'$, have compact support, and satisfy $A^{(j)}\in W^{1,\infty}(B)$, $q^{(j)}\in L^\infty(B)$.  
Hence, it follows from \eqref{eq_identity_main_sec3} that 
\begin{equation}
\label{eq_identity_main_sec3_1}
\begin{aligned}
\int_{B} (A^{(1)}-A^{(2)})\cdot ((Du_1)\overline{u_2}+u_1\overline{Du_2})dx\\
+\int_{B} ((A^{(1)})^2-(A^{(2)})^2+q^{(1)}-q^{(2)})u_1\overline{u_2}dx=0,
\end{aligned}
\end{equation}
for any $u_1,u_2\in H^2(B)$, which solve 
\[
 \mathcal{L}_{A^{(1)},q^{(1)}}u_1=0\ \textrm{in } B,\quad  \mathcal{L}_{\overline{A^{(2)}},\overline{q^{(2)}}}u_2=0\ \textrm{in } B. 
\]
 By Proposition \ref{prop_CGO_Lip} and Remark \ref{rem_com_geom_1}, we can construct complex geometric optics solutions $u_1$ and $u_2$ on $B$,  with $\zeta_1$ and $\zeta_2$ given by 
\eqref{eq_zeta_1_2}.  Substituting the constructed complex geometric optics solutions into \eqref{eq_identity_main_sec3_1}, and proceeding similarly to  \cite{NakSunUlm_1995, Salo_diss, Sun_1993},  we complete the proof. See also the proof of Theorem \ref{thm_main_1}.

\subsection{Proof of Theorem \ref{thm_Isak}}

Notice first that without loss of generality, as in the proofs of Theorem \ref{thm_main_1} and Theorem \ref{thm_main_2},
we assume, as we may, that
\[
A^{(1)}\cdot \nu|_{\p \Omega}=A^{(2)}\cdot \nu|_{\p \Omega}=0. 
\]
Then in the standard way as above, we obtain the following integral identity,
\begin{equation}
\label{eq_iden_isak}
\begin{aligned}
\int_{\Omega} (A^{(1)}-A^{(2)})\cdot ((Du_1)\overline{u_2}+u_1\overline{Du_2})dx\\
+\int_{\Omega} ((A^{(1)})^2-(A^{(2)})^2+q^{(1)}-q^{(2)})u_1\overline{u_2}dx=0,
\end{aligned}
\end{equation}
valid for all $u_1,u_2\in H^2(\Omega)$ such that 
\begin{equation}
\label{eq_u_1_isak}
\mathcal{L}_{A^{(1)},q^{(1)}}u_1=0\quad  \textrm{in } \Omega,\quad 
u_1|_{x_n=0}=0,
\end{equation}
\begin{equation}
\label{eq_u_2_isak}
 \mathcal{L}_{\overline{A^{(2)}},\overline{q^{(2)}}}u_2=0\quad  \textrm{in } \Omega,\quad 
u_2|_{x_n=0}=0. 
\end{equation}
Using the method of reflection as in Theorem \ref{thm_main_2}, we construct  complex geometric optics solutions $u_1$ and $u_2$, as given by \eqref{eq_u_1-CGO_1} and \eqref{eq_u_2-CGO_2_l_2}, and satisfying \eqref{eq_u_1_isak} and \eqref{eq_u_2_isak}, respectively.  

Substituting the complex geometric optics solutions $u_1$ and $u_2$ into \eqref{eq_iden_isak}, similarly to the proof of TheoremÊ\ref{thm_main_2}, we obtain that 
\begin{equation}
\label{eq_id_main_isak}
(i\mu^{(1)}+\mu^{(2)})\cdot \int_{\Omega\cup \Omega_0^*} (\tilde A^{(1)}-\tilde A^{(2)})e^{ix\cdot\xi}dx=0, 
\end{equation}
for all $\xi,\mu^{(1)},\mu^{(2)}\in\R^n$ such that 
\[
\xi\cdot \mu^{(1)}=\xi\cdot\mu^{(2)}=\mu^{(1)}\cdot\mu^{(2)}=0,\  |\mu^{(1)}|=|\mu^{(2)}|=1,\ 
\mu^{(2)}_n=0,\ \mu^{(1)}_n\ne 0.
\]
Here $\Omega_0^*=\{(x',x_n)\in \R^n :(x',-x_n)\in \Omega\}$.

At this point it is convenient to apply the boundary reconstruction results of \cite{Brown_Salo} to conclude that 
$A^{(1)}=A^{(2)}$ along $\overline{\gamma}$. Thus, it follows that $\tilde A^{(1)}=\tilde A^{(2)}$ along $\p (\Omega\cup \Omega_0^*)$.  Therefore, we may extend $\tilde A^{(j)}$, $j=1,2$, to compactly supported $W^{1,\infty}$ vector fields on some large ball $B\subset \R^n$, such that $\Omega\cup \Omega_0^*\subset\subset B$, in such a way that $\tilde A^{(1)}=\tilde A^{(2)}$ in $B\setminus(\Omega\cup \Omega_0^*)$.  
Hence, \eqref{eq_id_main_isak} is replaced by
\[
(i\mu^{(1)}+\mu^{(2)})\cdot \int_{B} (\tilde A^{(1)}-\tilde A^{(2)})e^{ix\cdot\xi}dx=0. 
\]
By Proposition \ref{prop_curl_thm_2}, we get $d\tilde A^{(1)}=d\tilde A^{(2)}$ in $B$, and therefore, there exists $\Psi\in C^{1,1}(\overline B)$
such that 
\[
\tilde A^{(1)}-\tilde A^{(2)}=\nabla \Psi\quad\textrm{in}\quad B. 
\]
It follows that $\nabla \Psi=0$ in  $B\setminus(\Omega\cup \Omega_0^*)$, and thus, $\Psi$ is constant along the connected set $\p (\Omega\cup \Omega_0^*)$. In particular,  $\Psi$ is constant along  $\overline{\gamma}$, and modifying $\Psi$ by constant, we may assume that $\Psi=0$ along $\overline{\gamma}$. 
Hence, we may and shall assume that $A^{(1)}=A^{(2)}$ in $\Omega$.  When recovering the electric potentials $q^{(1)}$ and $q^{(2)}$, 
we argue as in the end of the proof of Theorem \ref{thm_main_2}. This completes the proof.

\begin{appendix}

\section{Solvability of the direct problem in an infinite slab}

The purpose of this appendix is to provide a self-contained discussion of the solvability of the Dirichlet problem 
\eqref{eq_Dirichlet_problem}
for the magnetic Schr\"odinger operator in an infinite slab.  
Let 
\[
\Sigma=\{x=(x',x_n)\in\R^n:x'=(x_1,\dots,x_{n-1})\in \R^{n-1},0<x_n<L\}\subset\R^n, 
\]
$n\ge 3$, $L>0$,
be an infinite slab between two parallel hyperplanes 
\[
\Gamma_1=\{x\in\R^n:x_n=L\}\quad\textrm{and}\quad \Gamma_2=\{x\in\R^n:x_n=0\}. 
\]

By the Poincar\'e inequality in an infinite slab $\Sigma$, see \cite[Theorem 4.29]{Grubbbook2009},
 the quadratic form
\[
u\mapsto \int_\Sigma |\nabla u|^2dx
\]
is non-negative densely defined closed on $H^1_0(\Sigma)$. Associated with this quadratic form, the Laplace operator $-\Delta$, equipped with the domain
\[
\mathcal{D}(-\Delta)=\{u\in H^1_0(\Sigma):\Delta u\in L^2(\Sigma)\}, 
\]
is a non-negative self-adjoint operator on $L^2(\Sigma)$.

\begin{prop}

\label{prop_1}

We have  $\mathcal{D}(-\Delta)=H^1_0(\Sigma)\cap H^2(\Sigma)$. Furthermore, the spectrum of $-\Delta$ is purely absolutely continuous and is equal to $[\pi^2/L^2,+\infty)$. 
\end{prop}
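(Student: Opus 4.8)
The plan is to diagonalize $-\Delta$ by a partial Fourier transform in the tangential variables $x'$, thereby reducing the whole problem to a one-dimensional Dirichlet problem on the interval $(0,L)$. Let $\mathcal{F}$ be the Fourier transform in $x'$, a unitary operator identifying $L^2(\Sigma)$ with the space $L^2(\R^{n-1}_{\xi'};L^2(0,L))$. Under $\mathcal{F}$ the quadratic form transforms, by Plancherel, into
\[
\int_{\R^{n-1}}\Big(|\xi'|^2\,\|\hat u(\xi',\cdot)\|_{L^2(0,L)}^2+\|\partial_{x_n}\hat u(\xi',\cdot)\|_{L^2(0,L)}^2\Big)\,d\xi',
\]
so that $-\Delta$ is unitarily equivalent to the direct integral $\int_{\R^{n-1}}^{\oplus}(|\xi'|^2+A_0)\,d\xi'$, where $A_0=-d^2/dx_n^2$ on $(0,L)$ with form domain $H^1_0(0,L)$. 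The operator $A_0$ is the standard one-dimensional Dirichlet Laplacian: it is self-adjoint with domain $H^2(0,L)\cap H^1_0(0,L)$, and its spectrum consists of the simple eigenvalues $(\pi l/L)^2$, $l=1,2,\dots$, with orthonormal eigenbasis $\phi_l(x_n)=\sqrt{2/L}\,\sin(\pi l x_n/L)$ of $L^2(0,L)$.

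Expanding each fiber in this eigenbasis diagonalizes $A_0$ and exhibits $-\Delta$ as unitarily equivalent to $\bigoplus_{l\ge 1}\big(-\Delta_{x'}+(\pi l/L)^2\big)$ on $\bigoplus_{l\ge 1}L^2(\R^{n-1})$. The spectrum then follows at once: the $l$-th summand is the free Laplacian on $\R^{n-1}$ shifted by $(\pi l/L)^2$, with spectrum $[(\pi l/L)^2,+\infty)$, whence the spectrum of $-\Delta$ is $\bigcup_{l\ge 1}[(\pi l/L)^2,+\infty)=[\pi^2/L^2,+\infty)$, the lowest threshold $l=1$ being decisive.

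For absolute continuity I would invoke the standard fact that the free Laplacian $-\Delta_{x'}$ on $L^2(\R^{n-1})$ is purely absolutely continuous: after the Fourier transform in $x'$ it is multiplication by $|\xi'|^2$, the push-forward of $|\mathcal{F}f|^2\,d\xi'$ under $\xi'\mapsto|\xi'|^2$ being absolutely continuous by the coarea formula since $n-1\ge 2$, while the level sets of $|\xi'|^2$ have Lebesgue measure zero so there are no eigenvalues. A countable orthogonal direct sum of purely absolutely continuous self-adjoint operators is again purely absolutely continuous, which settles the spectral claim; in particular none of the thresholds $(\pi l/L)^2$ is an eigenvalue.

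For the domain identification, the inclusion $H^1_0(\Sigma)\cap H^2(\Sigma)\subset\mathcal{D}(-\Delta)$ is immediate. For the converse I would work in the fibered picture, where $\mathcal{D}(-\Delta)$ consists of those $u$ with $\hat u(\xi',\cdot)\in\mathcal{D}(A_0)$ a.e.\ and $\int\|(|\xi'|^2+A_0)\hat u(\xi')\|^2\,d\xi'<\infty$, and use the elementary identity, valid for $v\in H^2(0,L)\cap H^1_0(0,L)$ and $\lambda=|\xi'|^2\ge 0$,
\[
\|(\lambda+A_0)v\|_{L^2(0,L)}^2=\lambda^2\|v\|^2+2\lambda\|v'\|^2+\|v''\|^2,
\]
obtained by integration by parts, the Dirichlet condition killing all boundary terms. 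Integrating in $\xi'$ shows that $\|u\|_{L^2(\Sigma)}^2+\|\Delta u\|_{L^2(\Sigma)}^2$ controls, after elementary interpolation inequalities such as $\|v'\|^2\le\frac12(\|v\|^2+\|v''\|^2)$ and $|\xi'|^2\le\frac12(1+|\xi'|^4)$, the full collection of tangential, normal and mixed second-order Plancherel norms constituting $\|u\|_{H^2(\Sigma)}^2$; since $\mathcal{D}(-\Delta)$ lies in the form domain $H^1_0(\Sigma)$, the boundary condition is automatic. This proves that the graph norm of $-\Delta$ is equivalent to the $H^2(\Sigma)$-norm on the form domain, giving $\mathcal{D}(-\Delta)=H^1_0(\Sigma)\cap H^2(\Sigma)$. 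The main obstacle I anticipate is the pure absolute continuity: one must exclude both singular continuous spectrum and eigenvalues embedded at or above the thresholds, and it is precisely the direct-integral structure with $n-1\ge 2$ that guarantees this, the thresholds failing to be eigenvalues because multiplication by $|\xi'|^2$ on $L^2(\R^{n-1})$ has no point spectrum; the domain equivalence, by contrast, is routine once the exact fiber identity is in hand.
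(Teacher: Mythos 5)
Your proposal is correct, and at bottom it rests on the same separation-of-variables structure as the paper: both arguments exhibit $-\Delta$ as the direct sum $\bigoplus_{l\ge 1}\big(-\Delta_{x'}+l^2\pi^2/L^2\big)$ over the Dirichlet sine modes in $x_n$, and both deduce the spectral statement from that decomposition (you are more explicit about why pure absolute continuity survives a countable direct sum; the paper simply asserts this consequence). Where you genuinely differ is in the domain identification, and the two routes are dual to each other. The paper expands in sine modes first, so that for $-\Delta u=F$ each coefficient solves $(-\Delta_{x'}+l^2\pi^2/L^2)u_l=F_l$ on $\R^{n-1}$; it then needs the resolvent and elliptic estimates $\|u_l\|_{L^2}\le (L^2/l^2\pi^2)\|F_l\|_{L^2}$ and $\|u_l\|_{H^2}\le C\|F_l\|_{L^2}$, plus the interpolation bound $\|u_l\|_{H^1}\le C l^{-1}\|F_l\|_{L^2}$, and assembles the $H^2(\Sigma)$-norm by summing the Parseval identities for all first- and second-order derivatives with $l$-uniform constants. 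You instead take the Fourier transform in $x'$ first and use, on each fiber over $\xi'$, the exact integration-by-parts identity $\|(\lambda+A_0)v\|^2=\lambda^2\|v\|^2+2\lambda\|v'\|^2+\|v''\|^2$ with $\lambda=|\xi'|^2$, which after integration in $\xi'$ and elementary interpolation shows directly that the graph norm is equivalent to the $H^2(\Sigma)$-norm; this is cleaner, since no mode-dependent constants need to be tracked, whereas the paper's mode-by-mode solution operator has the advantage of prefiguring the analysis of Appendix A, where the modes $l\lessgtr kL/\pi$ must be treated separately. Two minor remarks: your appeal to $n-1\ge 2$ and the coarea formula for absolute continuity of the free Laplacian is unnecessary (multiplication by $|\xi'|^2$ has purely absolutely continuous spectrum in every dimension, and no eigenvalues, since its level sets are null); and the passage from the quadratic form on $H^1_0(\Sigma)$ to the fibered description of $\mathcal{D}(-\Delta)$ — that the Dirichlet condition fibers correctly under the partial Fourier transform — deserves a word of justification, though it is standard and at the same level of rigor as the paper's own term-by-term manipulations.
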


\begin{proof}
For $F\in L^2(\Sigma)$, we consider 
\[
-\Delta u=F,\quad u\in \mathcal{D}(-\Delta). 
\]
Taking the Fourier decompositions
with respect to the variable $x_n\in [0,L]$,  
\begin{equation}
\label{eq_Fourier_dec}
\begin{aligned}
u(x',x_n)&=\sum_{l=1}^\infty u_l(x')\sin\frac{l\pi x_n}{L}, \quad x'\in \R^{n-1},\quad x_n\in [0,L],\\
F(x',x_n)&=\sum_{l=1}^\infty F_l(x')\sin\frac{l\pi x_n}{L},
\end{aligned}
\end{equation}
we have 
\begin{equation}
\label{eq_separ_var}
\bigg(-\Delta_{x'}+\frac{l^2\pi^2}{L^2}\bigg)u_l(x')=F_l(x'), \quad x'\in \R^{n-1}, \quad l=1,2,\dots.
\end{equation}
Here the Fourier coefficients $u_l$ of $u$ and $F_l$ of $F$ are given by
\begin{equation}
\label{eq_Four_coefficinets}
u_l(x')=\frac{2}{L}\int_0^L u(x)\sin \frac{l\pi x_n}{L}dx_n,
\end{equation}
\[
F_l(x')=\frac{2}{L}\int_0^L F(x)\sin \frac{l\pi x_n}{L}dx_n. 
\]
The functions $F_l\in L^2(\R^{n-1})$ and we have the 
Parseval identity 
\[
\|F\|^2_{L^2(\Sigma)}=\frac{L}{2}\sum_{l=1}^\infty \|F_l\|^2_{L^2(\R^{n-1})}.
\]
The operator 
\[
-\Delta_{x'}+\frac{l^2\pi^2}{L^2}, \quad l\ge 1, 
\]
 on $\R^{n-1}$, equipped with the domain $H^2(\R^{n-1})$, is self-adjoint on $L^2(\R^{n-1})$ with purely absolutely continuous spectrum $[l^2\pi^2/L^2,+\infty)$.  Hence,  \eqref{eq_separ_var} has the unique solution 
 \[
 u_l(x')=\bigg(-\Delta_{x'}+\frac{l^2\pi^2}{L^2}\bigg)^{-1}F_l(x')\in H^2(\R^{n-1}),
 \]
 and moreover, 
 \begin{equation}
 \label{eq_u_l_l_2}
 \|u_l\|_{L^2(\R^{n-1})}\le \frac{L^2}{l^2\pi^2}\|F_l\|_{L^2(\R^{n-1})},
 \end{equation}
 \begin{equation}
 \label{eq_u_l_h_2}
  \|u_l\|_{H^2(\R^{n-1})}\le C(\|u_l\|_{L^2(\R^{n-1})}+\|\Delta_{x'} u_l\|_{L^2(\R^{n-1})})\le C\|F_l\|_{L^2(\R^{n-1})},
 \end{equation}
where $C$ is independent of  $l$.  By interpolation,
 \begin{equation}
 \label{eq_u_l_h_1}
\|u_l\|_{H^1(\R^{n-1})}\le \frac{C}{l}\|F_l\|_{L^2(\R^{n-1})},
\end{equation}
where $C$ is independent of $l$.
By Parseval's identity and  \eqref{eq_u_l_l_2},  we have
\begin{align*}
\|u\|_{L^2(\Sigma)}^2&=\frac{L}{2}\sum_{l=1}^\infty\|u_l\|_{L^2(\R^{n-1})}^2\le C\sum_{l=1}^\infty \frac{1}{l^4}\|F_l\|_{L^2(\R^{n-1})}^2\le C\|F\|^2_{L^2(\Sigma)},\\
\|\p_{x_n}u\|_{L^2(\Sigma)}^2&=\|\sum_{l=1}^\infty \frac{l\pi}{L} u_l(x')\cos\frac{l\pi x_n}{L}\|_{L^2(\Sigma)}^2\\
&=\frac{L}{2} \sum_{l=1}^\infty \frac{l^2\pi^2}{L^2}\|u_l\|^2_{L^2(\R^{n-1})}\le C\|F\|^2_{L^2(\Sigma)},
\end{align*}
Using \eqref{eq_u_l_h_1}, we get
\[
\|\p_{x_j}u\|^2_{L^2(\Sigma)}=\frac{L}{2}\sum_{l=1}^\infty\|\p_{x_j} u_l\|^2_{L^2(\R^{n-1})}\le C\sum_{l=1}^\infty \frac{1}{l^2}\|F_l\|_{L^2(\R^{n-1})}^2\le C\|F\|^2_{L^2(\Sigma)},
\]
$j=1,2,\dots,n-1$. It follows from \eqref{eq_u_l_h_2} that
\[
\|\p^2_{x_j,x_k}u\|^2_{L^2(\Sigma)}=\frac{L}{2}\sum_{l=1}^\infty\|\p^2_{x_j,x_k} u_l\|^2_{L^2(\R^{n-1})}\le C\|F\|^2_{L^2(\Sigma)},
\]
$j,k=1,2,\dots,n-1$. Furthermore,
\[
\|\p^2_{x_n}u\|^2_{L^2(\Sigma)}=\frac{L}{2}\sum_{l=1}^\infty \frac{l^4\pi^4}{L^4}\|u_l\|^2_{L^2(\R^{n-1})}\le C\|F\|^2_{L^2(\Sigma)},
\]
\[
\|\p^2_{x_j,x_n}u\|^2_{L^2(\Sigma)}=\frac{L}{2}\sum_{l=1}^\infty \frac{l^2\pi^2}{L^2} \|\p_{x_j} u_l\|^2_{L^2(\R^{n-1})}\le C\|F\|^2_{L^2(\Sigma)},
\]
$j=1,2,\dots,n-1$. Hence, $u\in H^2(\Sigma)$. The proof is complete, since the statement concerning the spectrum of $-\Delta$ follows from the fact that  
\[
-\Delta=\bigoplus_{l=1}^\infty \bigg(-\Delta_{x'}+\frac{l^2\pi^2}{L^2}\bigg). 
\]

\end{proof}

\begin{prop}

\label{prop_essential_spec}
Let $A\in W^{1,\infty}(\Sigma,\C^n)\cap \mathcal{E}'(\overline{\Sigma}, \C^n)$ and $q\in L^\infty(\Sigma,\C)\cap \mathcal{E}'(\overline{\Sigma},\C)$.  
Then the operator $\mathcal{L}_{A,q}(x,D)$, equipped with the domain $H^1_0(\Sigma)\cap H^2(\Sigma)$ is closed and its essential spectrum is equal to $[\pi^2/L^2,+\infty)$. 

\end{prop}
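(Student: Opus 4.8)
The plan is to write $\mathcal{L}_{A,q}=-\Delta+P$, where
\[
P=2A\cdot D+\big((D\cdot A)+A\cdot A+q\big),\qquad D=-i\nabla,
\]
is a first-order differential operator whose coefficients $A$, $D\cdot A=-i\operatorname{div}A$, $A\cdot A$ and $q$ are all bounded and compactly supported in $\overline\Sigma$. I would treat $P$ as a perturbation of the self-adjoint operator $-\Delta$ from Proposition \ref{prop_1} and establish two facts: first, that $P$ is $(-\Delta)$-bounded with relative bound zero, which yields closedness of $\mathcal{L}_{A,q}$ on $\mathcal{D}(-\Delta)=H^1_0(\Sigma)\cap H^2(\Sigma)$; and second, that $P$ is $(-\Delta)$-compact, which by the stability of the essential spectrum under relatively compact perturbations forces $\sigma_{\mathrm{ess}}(\mathcal{L}_{A,q})=\sigma_{\mathrm{ess}}(-\Delta)=[\pi^2/L^2,+\infty)$.

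For the first point, for $u\in\mathcal{D}(-\Delta)$ one has the bound $\|Pu\|_{L^2(\Sigma)}\le 2\|A\|_{L^\infty}\|\nabla u\|_{L^2(\Sigma)}+C\|u\|_{L^2(\Sigma)}$, with $C$ depending on $\|A\|_{W^{1,\infty}}$ and $\|q\|_{L^\infty}$. Combining the interpolation inequality $\|\nabla u\|_{L^2(\Sigma)}\le\varepsilon\|u\|_{H^2(\Sigma)}+C_\varepsilon\|u\|_{L^2(\Sigma)}$ with the elliptic estimate $\|u\|_{H^2(\Sigma)}\le C(\|\Delta u\|_{L^2(\Sigma)}+\|u\|_{L^2(\Sigma)})$ supplied by Proposition \ref{prop_1} gives, for every $\varepsilon>0$, a constant $C_\varepsilon'$ with
\[
\|Pu\|_{L^2(\Sigma)}\le\varepsilon\|\Delta u\|_{L^2(\Sigma)}+C_\varepsilon'\|u\|_{L^2(\Sigma)}.
\]
Hence $P$ has $(-\Delta)$-bound zero, and since $-\Delta$ is closed, $\mathcal{L}_{A,q}=-\Delta+P$ is closed on the same domain.

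For the second point, let $K\subset\overline\Sigma$ be the compact union of the supports of $A$ and $q$, and fix a bounded open set $\Sigma\cap B'$ with $K\subset B'$. If $f_m$ is bounded in $L^2(\Sigma)$ and $u_m=(-\Delta-z)^{-1}f_m$ for a fixed $z$ in the resolvent set of $-\Delta$, then $u_m$ is bounded in $H^2(\Sigma)$, so $u_m|_{\Sigma\cap B'}$ is bounded in $H^2(\Sigma\cap B')$. Since $\Sigma\cap B'$ is a bounded Lipschitz domain, the Rellich--Kondrachov embedding $H^2(\Sigma\cap B')\hookrightarrow\hookrightarrow H^1(\Sigma\cap B')$ yields a subsequence converging in $H^1(\Sigma\cap B')$. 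As $Pu$ depends only on the values of $u$ and $\nabla u$ on $K$ and the coefficients of $P$ are bounded, one has $\|Pu_{m_j}-Pu_{m_k}\|_{L^2(\Sigma)}\le C\|u_{m_j}-u_{m_k}\|_{H^1(\Sigma\cap B')}$, so $Pu_{m_j}$ converges in $L^2(\Sigma)$. Thus $P(-\Delta-z)^{-1}$ is compact, i.e. $P$ is $(-\Delta)$-compact.

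Finally, $-\Delta$ is self-adjoint with purely absolutely continuous spectrum $[\pi^2/L^2,+\infty)$ by Proposition \ref{prop_1}, so $\sigma_{\mathrm{ess}}(-\Delta)=[\pi^2/L^2,+\infty)$. The Weyl-type stability theorem in its Fredholm formulation, valid for closed and not necessarily self-adjoint operators, states that relatively compact perturbations preserve Fredholmness and index; hence $\mathcal{L}_{A,q}-\lambda$ is Fredholm of index zero exactly when $-\Delta-\lambda$ is, and therefore $\sigma_{\mathrm{ess}}(\mathcal{L}_{A,q})=[\pi^2/L^2,+\infty)$. The main obstacle is the relative compactness step: global compactness fails on the unbounded slab because $H^2(\Sigma)\hookrightarrow H^1(\Sigma)$ is \emph{not} compact, so it is essential to exploit the compact supports of $A$ and $q$ to localize the estimate to $\Sigma\cap B'$. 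A secondary point, forced by the non-self-adjointness of $\mathcal{L}_{A,q}$, is that one must use the Fredholm definition of the essential spectrum together with the corresponding perturbation theorem rather than the Weyl-sequence criterion available only in the self-adjoint case.
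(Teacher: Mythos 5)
Your proof is correct and follows essentially the same route as the paper: both decompose $\mathcal{L}_{A,q}=-\Delta+P$ with $P$ a first-order operator having bounded, compactly supported coefficients, prove that $P$ is relatively compact with respect to $-\Delta$ by localizing to a bounded subdomain where a compact Sobolev embedding applies (the paper via the cutoff $\chi$ and the compact map $\chi\Delta^{-1}$, you via Rellich--Kondrachov on $\Sigma\cap B'$), and then invoke the invariance of the essential spectrum under relatively compact perturbations together with Proposition \ref{prop_1}. The only difference is that you make explicit two points the paper leaves implicit: closedness via the relative-bound-zero (Kato--Rellich) argument, and the Fredholm-index formulation of the essential spectrum required in the non-self-adjoint setting.
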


\begin{proof}
We write 
\[
\mathcal{L}_{A,q}=-\Delta +2A\cdot D +\tilde q,\quad \tilde q=-i(\nabla\cdot A) +A^2+q\in  L^\infty(\Sigma,\C)\cap \mathcal{E}'(\overline{\Sigma},\C). 
\] 
Let $\chi\in C^\infty(\overline{\Sigma})$ be compactly supported and  $\chi=1$ near $\supp(\tilde q)\cup\supp(A)$.  Then the operator 
\[
\tilde q\Delta^{-1}:L^2(\Sigma)\to L^2(\Sigma)
\]
is compact, as a composition of the compact operator 
\[
\chi\Delta^{-1}:L^2(\Sigma)\to H^2(\Sigma)\cap \mathcal{E}'(\supp(\chi))\hookrightarrow L^2(\Sigma),
\]
and the bounded operator $\tilde q:L^2(\Sigma)\to L^2(\Sigma)$. 

For $j=1,\dots, n$,  the operator 
\[
A_jD_j\Delta^{-1}:L^2(\Sigma)\to L^2(\Sigma)
\]
is compact, as a composition of the compact operator 
\[
\chi D_j\Delta^{-1}:L^2(\Sigma)\to H^1(\Sigma)\cap \mathcal{E}'(\supp(\chi))\hookrightarrow L^2(\Sigma),
\]
and the bounded operator $A:L^2(\Sigma)\to L^2(\Sigma)$. Since relatively compact perturbations do not change the essential spectrum, the result follows in view of Proposition \ref{prop_1}.  

\end{proof}

Let $A\in W^{1,\infty}(\Sigma,\C^n)\cap \mathcal{E}'(\overline{\Sigma}, \C^n)$ and $q\in L^\infty(\Sigma,\C)\cap \mathcal{E}'(\overline{\Sigma},\C)$. 
Consider the following Dirichlet problem,
\begin{equation}
\label{eq_Dirichlet_app_first}
\begin{aligned}
(\mathcal{L}_{A,q}(x,D)-k^2)u&=F\quad \textrm{in}\quad \Sigma,\\
u|_{\p \Sigma}&=0,
\end{aligned}
\end{equation}
for some $k\ge 0$.

\textbf{(I)  The case $0\le k<\pi/L$. } 
We have the following immediate consequence of Proposition \ref{prop_essential_spec}.

\begin{cor} 
\label{cor_disc_spec_sol}
Assume that $0\le k<\pi/L$ and $k^2$ does not belong to the discrete spectrum of the operator $\mathcal{L}_{A,q}$, equipped with the domain $H^1_0(\Sigma)\cap H^2(\Sigma)$.  Then for any $F\in L^2(\Sigma)$, the problem \eqref{eq_Dirichlet_app_first} has a unique solution $u\in H^2(\Sigma)$. 

\end{cor}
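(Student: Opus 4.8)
The plan is to read off the corollary from the spectral information in Proposition \ref{prop_essential_spec} together with the Fredholm alternative. The key opening remark is that the hypothesis $0\le k<\pi/L$ forces $k^2<\pi^2/L^2=\inf\sigma_{\mathrm{ess}}(\mathcal{L}_{A,q})$; in particular $k^2$ lies in the resolvent set of the free Laplacian $-\Delta$, whose spectrum is $[\pi^2/L^2,+\infty)$ by Proposition \ref{prop_1}. Hence $(-\Delta-k^2)^{-1}\colon L^2(\Sigma)\to H^1_0(\Sigma)\cap H^2(\Sigma)$ is a bounded bijection.

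Next I would factor the operator. Writing $\mathcal{L}_{A,q}=-\Delta+V$ with $V=2A\cdot D+\tilde q$ and $\tilde q=-i(\nabla\cdot A)+A^2+q$, exactly as in the proof of Proposition \ref{prop_essential_spec}, I would express
\[
\mathcal{L}_{A,q}-k^2=\big(I+V(-\Delta-k^2)^{-1}\big)(-\Delta-k^2).
\]
Here $V(-\Delta-k^2)^{-1}\colon L^2(\Sigma)\to L^2(\Sigma)$ is compact, by the very same compact-support and interior-regularity argument already used in Proposition \ref{prop_essential_spec}: the factor $V$ is supported in a fixed compact set, so $\chi(-\Delta-k^2)^{-1}$ lands in $H^2$ of a bounded set, which embeds compactly into $L^2$. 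Since $-\Delta-k^2$ is boundedly invertible, $\mathcal{L}_{A,q}-k^2$ is invertible on $L^2(\Sigma)$ if and only if $I+V(-\Delta-k^2)^{-1}$ is, and by the Fredholm alternative the latter is invertible precisely when it is injective, that is, precisely when $\mathcal{L}_{A,q}-k^2$ is injective on its domain, i.e.\ when $k^2$ is not an eigenvalue.

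The remaining point is to convert the hypothesis $k^2\notin\sigma_{\mathrm{disc}}(\mathcal{L}_{A,q})$ into the statement that $k^2$ is not an eigenvalue. For this I would show that the entire part of the spectrum of $\mathcal{L}_{A,q}$ lying in the connected open region $\C\setminus[\pi^2/L^2,+\infty)$ is discrete. The map $\lambda\mapsto V(-\Delta-\lambda)^{-1}$ is an analytic family of compact operators on this region, so by the analytic Fredholm theorem $I+V(-\Delta-\lambda)^{-1}$ is invertible off a discrete subset, with $(\mathcal{L}_{A,q}-\lambda)^{-1}$ meromorphic and with poles at eigenvalues of finite algebraic multiplicity, provided it is invertible at even one point of the region. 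That one point I would supply by taking $\lambda=it$ with $t$ large: since $-\Delta$ is self-adjoint, $\|(-\Delta-it)^{-1}\|\le 1/t$ and $\|D(-\Delta-it)^{-1}\|=\sup_{s\ge\pi^2/L^2}\sqrt{s}/|s-it|\to 0$ as $t\to\infty$, whence $\|V(-\Delta-it)^{-1}\|<1$ for $t$ large and $it\in\rho(\mathcal{L}_{A,q})$. Consequently every spectral point of $\mathcal{L}_{A,q}$ in $\C\setminus[\pi^2/L^2,+\infty)$ is an isolated eigenvalue of finite algebraic multiplicity, i.e.\ belongs to $\sigma_{\mathrm{disc}}(\mathcal{L}_{A,q})$. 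As $k^2$ lies in this region and, by assumption, not in $\sigma_{\mathrm{disc}}(\mathcal{L}_{A,q})$, it is not an eigenvalue.

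Combining the two steps, $I+V(-\Delta-k^2)^{-1}$ is injective, hence invertible, so $\mathcal{L}_{A,q}-k^2\colon H^1_0(\Sigma)\cap H^2(\Sigma)\to L^2(\Sigma)$ is a bijection, yielding for every $F\in L^2(\Sigma)$ a unique solution $u\in H^2(\Sigma)$ of \eqref{eq_Dirichlet_app_first}. The main obstacle is precisely the middle step — passing from ``not in the discrete spectrum'' to ``not an eigenvalue'' — which genuinely requires the analytic-Fredholm/connectedness argument because $\mathcal{L}_{A,q}$ need not be self-adjoint; once the resolvent set is shown to meet the component $\C\setminus[\pi^2/L^2,+\infty)$, the rest is routine.
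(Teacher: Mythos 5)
Your proof is correct. For comparison: the paper offers no written proof at all --- it presents the corollary as an ``immediate consequence'' of Proposition \ref{prop_essential_spec}, implicitly invoking the standard fact that for a relatively compact perturbation of a self-adjoint operator the spectrum lying outside the essential spectrum consists only of discrete eigenvalues, so that $k^2\notin\sigma_{\mathrm{disc}}(\mathcal{L}_{A,q})$ together with $k^2<\pi^2/L^2$ puts $k^2$ in the resolvent set. What you do differently is make that implicit step into an actual argument: the factorization $\mathcal{L}_{A,q}-k^2=\bigl(I+V(-\Delta-k^2)^{-1}\bigr)(-\Delta-k^2)$, the compactness of $V(-\Delta-k^2)^{-1}$ (recycled from the proof of Proposition \ref{prop_essential_spec}), the Fredholm alternative, and --- crucially --- the analytic Fredholm theorem on the connected region $\C\setminus[\pi^2/L^2,+\infty)$ seeded by the point $\lambda=it$, $t$ large, where the Neumann series converges. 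You correctly identify this last step as the genuine content: since $\mathcal{L}_{A,q}$ need not be self-adjoint, ``not in the discrete spectrum'' does not automatically mean ``not in the spectrum''; one must know the resolvent set meets the component of $\C\setminus\sigma_{\mathrm{ess}}$ containing $k^2$, which your large-imaginary-$\lambda$ estimate supplies. So your route is a self-contained expansion of what the paper cites as standard spectral theory; the paper's version is shorter but leans on an unreferenced classical theorem (e.g.\ Kato-type results on relatively compact perturbations), while yours is complete as written, at the cost of invoking the analytic Fredholm machinery explicitly.
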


\textbf{(II) The case $ k\ge \pi/L$. }  Our goal here is to study the solvability of the problem \eqref{eq_Dirichlet_app_first} for $F\in L^2(\Sigma)\cap\mathcal{E}'(\overline{\Sigma})$.   
In order to do this, let us first focus on the Dirichlet  problem for the Laplacian in the slab $\Sigma$,
\begin{equation}
\label{eq_Dirichlet_app}
\begin{aligned}
(-\Delta -k^2) u&=F\quad \textrm{in}\quad \Sigma,\\
u|_{\p \Sigma}&=0,
\end{aligned}
\end{equation}
for some $k\ge \pi/L$.  Taking the Fourier decomposition \eqref{eq_Fourier_dec}, we have
\begin{equation}
\label{eq_with_k}
\bigg(-\Delta_{x'}+\frac{l^2\pi^2}{L^2}-k^2\bigg)u_l(x')=F_l(x'), \quad x'\in \R^{n-1}, \quad l=1,2,\dots.
\end{equation}

\textbf{(II.i)} In the case when $l\in \N$ is such that $k>\pi l/L$, the equation
\eqref{eq_with_k} has a unique solution $u_l(x')$ satisfying the Sommerfeld radiation condition 
 \begin{equation}
 \label{eq_radiation_Som}
 u_l(x')=\mathcal{O}(|x'|^{-(n-2)/2}),\quad \bigg(\frac{\p}{\p |x'|}-ik_l\bigg)u_l(x')=o(|x'|^{-(n-2)/2}), 
  \end{equation}
 as $|x'|\to\infty$, see \cite{Col_Kress_book}. Here $k_l=\sqrt{k^2-l^2\pi^2/L^2}>0$. Notice that by elliptic regularity, $u_l\in H^2_{\textrm{loc}}(\R^{n-1})$. 
 
\textbf{(II.ii)} In the case when $l\in \N$ is such that $k<\pi l/L$, the equation
\eqref{eq_with_k} has a unique solution $u_l\in H^2(\R^{n-1})$.  

\textbf{(II.iii)} In the case when $l\in \N$ is such that $k=\pi l/L$, the equation 
\eqref{eq_with_k} has the following form,
\begin{equation}
\label{eq_with_k_2}
-\Delta_{x'}u_l(x')=F_l(x'), \quad x'\in \R^{n-1}. 
\end{equation}
In the case $n\ge 4$, \eqref{eq_with_k_2} has a unique solution $u_l\in H^2_{\textrm{loc}}(\R^{n-1})$ satisfying
\begin{equation}
\label{eq_radiation_Som_2}
u_l(x')=\mathcal{O}(|x'|^{3-n}),\quad \nabla_{x'} u_l(x')=\mathcal{O}(|x'|^{2-n}),
\end{equation}
as $|x'|\to \infty$.  Indeed, we have $u_l=E*F_l$, where $E(x')=C_{n}|x'|^{3-n}$ is the standard fundamental solution of $-\Delta$ in $\R^{n-1}$, $C_n\ne 0$ is a constant.

In the case $n=3$, we shall make the following assumption.

\begin{itemize}

\item[\textbf{(A.I)}]  In the case $n=3$, assume that $k$ is such that  $k\ne\pi l/L$, for all $l\in \N$.

\end{itemize}

The assumption (A.I) is motivated by the fact that  \eqref{eq_with_k_2} in general lacks solutions that are bounded on $\R^2$. 
Indeed, the general solution of \eqref{eq_with_k_2} in $\mathcal{S}'(\R^2)$ has the form,
\[
u_l=E_2*F_l+ p,
\]
where $E_2(x')=(2\pi)^{-1}\log|x'|$ is the standard fundamental solution of $-\Delta$ in $\R^2$, and $p$ is a harmonic polynomial. 

In what follows we shall need the  notation, 
\begin{align*}
\Sigma_{<R}:&=\Sigma\cap \{x\in \R^n:|x'|<R\},\\
\Sigma_{>R}:&=\Sigma\cap \{x\in \R^n:|x'|>R\}, \quad R>0, 
\end{align*}
and the following definition, which is closely related to the discussion in    \cite{Morgen_Werner_1987}.

\begin{defn}
\label{def_adm}
Assume that $u$ satisfies the following Dirichlet problem,
\begin{align*}
(-\Delta-k^2)u&=0\quad \textrm{in}\quad \Sigma_{>R},\\
u|_{\p\Sigma\cap\overline{\Sigma_{>R}}}&=0,
\end{align*}
for $R$ sufficiently large and $k\ge \pi/L$ such that  the assumption \emph{(A.I)} holds.    Let us write
\[
u(x)=\sum_{l=1}^\infty u_l(x')\sin\frac{l\pi x_n}{L}, 
\]
where the Fourier coefficients $u_l(x')$ are given by \eqref{eq_Four_coefficinets}. The function $u$ is said to be admissible, provided that the following conditions hold:
\begin{itemize}
\item[(i)] if $l<kL/\pi$, then  $u_l(x')$ satisfies the Sommerfeld radiation condition \eqref{eq_radiation_Som};
\item[(ii)] if $l>kL/\pi$, then $u_l\in H^2(\R^{n-1})$;

\item[(iii)] if $l=kL/\pi$ and $n\ge 4$, then $u_{kL/\pi}(x')$ satisfying \eqref{eq_radiation_Som_2}.

\end{itemize}

\end{defn}

Notice that if the function $u$ is admissible then $u\in H^2_{\textrm{loc}}(\overline{\Sigma})$. 
We obtain the following result. 

\begin{prop} Let $k\ge \pi/L$ and let  the assumption \emph{(A.I)} be satisfied. Then  for any 
$F\in L^2(\Sigma)\cap\mathcal{E}'(\overline{\Sigma})$, the  Dirichlet problem \eqref{eq_Dirichlet_app} for the Laplacian in the slab  has a unique admissible solution in the sense of Definition \emph{\ref{def_adm}}.
\end{prop}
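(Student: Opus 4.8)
The plan is to construct the solution mode-by-mode via the Fourier decomposition \eqref{eq_Fourier_dec} in the variable $x_n$, invoking the already established one-mode solvability statements (II.i)--(II.iii), and then to verify that the resulting series defines an element of $H^2_{\textrm{loc}}(\overline\Sigma)$ which is admissible in the sense of Definition \ref{def_adm}.

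For existence, I would first expand $F$ as in \eqref{eq_Fourier_dec}, so that the coefficients $F_l\in L^2(\R^{n-1})$ satisfy Parseval's identity. Since $F\in\mathcal{E}'(\overline\Sigma)$, its support projects onto a bounded set $\{|x'|\le R_0\}$, whence every $F_l$ is supported in the common ball $\{|x'|\le R_0\}$; this is exactly what is needed for the outgoing solutions of the propagating and threshold modes to be well defined. For each $l$ I then solve \eqref{eq_with_k} using the appropriate case: the unique outgoing $u_l$ satisfying \eqref{eq_radiation_Som} when $l<kL/\pi$ (case II.i), the unique $u_l\in H^2(\R^{n-1})$ when $l>kL/\pi$ (case II.ii), and, when $k=\pi l/L$ and $n\ge 4$, the unique $u_l$ satisfying \eqref{eq_radiation_Som_2} (case II.iii); the assumption (A.I) rules out a threshold mode when $n=3$. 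Setting $u=\sum_l u_l(x')\sin(l\pi x_n/L)$ and noting that each summand vanishes on $\partial\Sigma$, this $u$ formally solves \eqref{eq_Dirichlet_app} and is admissible by construction.

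The convergence of this series in $H^2_{\textrm{loc}}(\overline\Sigma)$ is the main point to establish. There are only finitely many indices $l\le kL/\pi$, and each corresponding term $u_l(x')\sin(l\pi x_n/L)$ lies in $H^2_{\textrm{loc}}(\overline\Sigma)$ by elliptic regularity, so it suffices to control the evanescent tail $\sum_{l>kL/\pi}$. For these indices the mass $m_l^2:=l^2\pi^2/L^2-k^2$ is positive and grows like $l^2$, so testing $(-\Delta_{x'}+m_l^2)u_l=F_l$ against $u_l$ and using the constant-coefficient elliptic estimate on $\R^{n-1}$ gives, uniformly for large $l$, the bounds $\|u_l\|_{L^2(\R^{n-1})}\le Cm_l^{-2}\|F_l\|_{L^2(\R^{n-1})}$ and $\|u_l\|_{H^2(\R^{n-1})}\le C\|F_l\|_{L^2(\R^{n-1})}$, exactly as in Proposition \ref{prop_1}. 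Summing the Parseval expressions for all second order derivatives of the tail then bounds them by $C\|F\|^2_{L^2(\Sigma)}$, so the tail converges in $H^2(\Sigma)$ and $u\in H^2_{\textrm{loc}}(\overline\Sigma)$.

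For uniqueness, suppose $u$ is an admissible solution with $F=0$; projecting onto the $x_n$-modes shows that each $u_l$ solves the homogeneous version of \eqref{eq_with_k} in the admissibility class. When $l>kL/\pi$ we have $u_l\in H^2(\R^{n-1})\subset L^2$, so pairing with $u_l$ forces $\|\nabla u_l\|^2+m_l^2\|u_l\|^2=0$ and $u_l=0$. When $l<kL/\pi$, $u_l$ is an entire solution of the homogeneous Helmholtz equation on $\R^{n-1}$ satisfying the radiation condition \eqref{eq_radiation_Som}, so $u_l\equiv0$ by Rellich's uniqueness lemma. Finally, in the threshold case $l=kL/\pi$ with $n\ge 4$, $u_l$ is harmonic on $\R^{n-1}$ with $u_l=\mathcal{O}(|x'|^{3-n})\to 0$, hence $u_l\equiv 0$ by Liouville's theorem. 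Thus every $u_l$ vanishes and $u=0$, which proves uniqueness. I expect the genuinely delicate steps to be the uniform-in-$l$ elliptic estimates guaranteeing $H^2$-convergence of the evanescent tail and the appeal to Rellich's lemma for the propagating modes.
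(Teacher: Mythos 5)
Your proposal is correct and follows essentially the same route as the paper: the paper's proof of this proposition is precisely the preceding mode-by-mode discussion (II.i)--(II.iii) combined with the uniform elliptic estimates of Proposition \ref{prop_1}, which is exactly the argument you give. Your write-up merely makes explicit two points the paper leaves implicit — the $H^2_{\textrm{loc}}(\overline{\Sigma})$ convergence of the evanescent tail and the mode-wise uniqueness (energy identity for $l>kL/\pi$, Rellich's lemma for the propagating modes, Liouville for the threshold mode when $n\ge 4$) — and both are handled correctly.
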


Let us introduce the solution operator for the  Dirichlet problem \eqref{eq_Dirichlet_app},
\begin{align*}
R_0(k): L^2(\Sigma)\cap \mathcal{E}'(\overline{\Sigma})\to H^2_{\textrm{loc}}(\overline{\Sigma}),\ R_0(F)=u,
\end{align*}
where $u$ is the admissible solution of \eqref{eq_Dirichlet_app}.

In order to study the  solvability of the problem \eqref{eq_Dirichlet_app_first} for $F\in L^2(\Sigma)\cap\mathcal{E}'(\overline{\Sigma})$, in 
the case when $ k\ge \pi/L$, we shall use the Lax-Phillips method, see \cite{Isakov_book, Lax_Phillips,  LiUhl2010},  and to that end, we shall need the following assumption, which was also made in \cite{LiUhl2010}.

\begin{itemize}

\item[\textbf{(A.II)}] Let $k\ge \pi/L$ and let the assumption (A.I) be satisfied.   If $u$ is an admissible solution of the problem \eqref{eq_Dirichlet_app_first}  with $F=0$, then $u$ vanishes identically.  

\end{itemize}

The following result shows that in the self-adjoint case, assumption (A.II) is satisfied away from the embedded eigenvalues and the set of thresholds $\{(\pi l/L)^2:l=1,2,\dots\}$ of the operator $\mathcal{L}_{A,q}$. 

\begin{prop} 

\label{prop_admiss_fr}

Let $A\in W^{1,\infty}(\Sigma,\R^n)\cap \mathcal{E}'(\overline{\Sigma},\R^n)$, $q\in L^\infty(\Sigma,\C)\cap \mathcal{E}'(\overline{\Sigma},\C)$, and $\emph{\Im} q\le 0$. Assume that  $k\ge \pi/L$ is such that $k^2$ is not an eigenvalue of the operator $\mathcal{L}_{A,q}$, equipped with the domain $H^1_0(\Sigma)\cap H^2(\Sigma)$, and $k\ne \pi l/L$, for all $l=1,2,\dots$. 
Then the assumption \emph{(A.II)} is satisfied.  
\end{prop}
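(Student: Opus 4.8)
The plan is to run a Rellich-type uniqueness argument based on an energy identity in the truncated slab. Let $u$ be an admissible solution of $(\mathcal{L}_{A,q}-k^2)u=0$ in $\Sigma$ with $u|_{\partial\Sigma}=0$, and write its Fourier decomposition $u(x)=\sum_{l\ge 1}u_l(x')\sin(l\pi x_n/L)$. Since $A$ and $q$ are compactly supported, say inside $\Sigma_{<R_0}$, for $R>R_0$ I would apply the magnetic Green formula \eqref{eq_Green} on $\Sigma_{<R}$ with $v=u$. Because $A$ is real and $q$ is only a zeroth order coefficient, the identity reads
\[
\int_{\Sigma_{<R}}|(D+A)u|^2\,dx+\int_{\Sigma_{<R}}(q-k^2)|u|^2\,dx=\int_{\{|x'|=R\}\cap\Sigma}(\partial_r u)\bar u\,dS,
\]
where the contributions from $\Gamma_1\cup\Gamma_2$ drop out because $u|_{\partial\Sigma}=0$, and the magnetic boundary term on $\{|x'|=R\}$ vanishes since $A=0$ there. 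Taking imaginary parts and using $\Im q\le 0$ gives
\[
\Im\int_{\{|x'|=R\}\cap\Sigma}(\partial_r u)\bar u\,dS=\int_{\Sigma_{<R}}(\Im q)|u|^2\,dx\le 0,
\]
the right-hand side being independent of $R$ for $R>R_0$.

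Next I would evaluate the flux mode by mode, using orthogonality of $\{\sin(l\pi x_n/L)\}$ on $[0,L]$, so that it splits as $\tfrac{L}{2}\sum_l\Im\int_{|x'|=R}(\partial_r u_l)\bar u_l\,dS_{x'}$. For an evanescent mode ($l>kL/\pi$), $u_l\in H^2(\R^{n-1})$ solves the real equation $(-\Delta_{x'}+\kappa_l^2)u_l=0$ with $\kappa_l^2=l^2\pi^2/L^2-k^2>0$; a Green identity on the exterior annulus shows its imaginary flux is constant in $R$ and tends to $0$ by exponential decay, hence vanishes for every such $l$. For a propagating mode ($l<kL/\pi$), the Sommerfeld condition \eqref{eq_radiation_Som}, inserted into the expansion of $\int_{|x'|=R}|\partial_r u_l-ik_l u_l|^2$, gives $\Im\int_{|x'|=R}(\partial_r u_l)\bar u_l\to k_l\lim_R\int_{|x'|=R}|u_l|^2\ge 0$; the threshold case $l=kL/\pi$ is excluded by $k\ne\pi l/L$. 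Thus the left-hand side of the energy identity converges to a nonnegative number while the right-hand side is $\le 0$, so both vanish and $\lim_R\int_{|x'|=R}|u_l|^2=0$ for every propagating $l$. Rellich's uniqueness lemma then forces $u_l=0$ for $|x'|>R_0$.

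At this stage only the evanescent modes survive in the tail. Here I would exploit the uniform spectral gap $\kappa_l^2\ge\delta>0$ together with the exterior energy estimate $\kappa_l^2\|u_l\|_{L^2(|x'|>R_0)}^2\le\|\partial_r u_l\|_{L^2(S_{R_0})}\|u_l\|_{L^2(S_{R_0})}$ (obtained by integrating $(-\Delta_{x'}+\kappa_l^2)u_l=0$ against $\bar u_l$, where $S_{R_0}=\{|x'|=R_0\}$) and Cauchy--Schwarz in $l$ to sum over the modes, the series being controlled by the finite traces of $u$ and $\partial_r u$ on $\{|x'|=R_0\}\cap\Sigma$ coming from $u\in H^2_{\mathrm{loc}}(\overline{\Sigma})$. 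This yields $u\in L^2(\Sigma)$. Rewriting the equation as $-\Delta u=(k^2-2A\cdot D-\tilde q)u$ with right-hand side in $L^2(\Sigma)$ and arguing exactly as in Proposition \ref{prop_1} mode by mode (each $-\Delta_{x'}+(l\pi/L)^2$ being boundedly invertible on $L^2(\R^{n-1})$), I would conclude $u\in H^2(\Sigma)\cap H^1_0(\Sigma)$. Hence $u$ lies in the domain of $\mathcal{L}_{A,q}$ and satisfies $\mathcal{L}_{A,q}u=k^2u$; since $k^2$ is assumed not to be an eigenvalue, $u\equiv 0$.

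The hard part will be the flux analysis: establishing the nonnegativity of the limiting propagating flux from the radiation condition (the quantitative form of Rellich's lemma in $\R^{n-1}$) and, separately, securing the global $L^2$ bound on the infinitely many evanescent modes uniformly in $l$. Once the energy identity isolates $\Im q\le 0$ against a nonnegative radiation flux, the vanishing of the far fields and the reduction to a genuine eigenvalue problem are decisive; the magnetic potential plays a benign role, since being real and compactly supported it contributes only a real kinetic term and no flux at infinity.
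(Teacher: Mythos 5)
Your proposal is correct, and its skeleton coincides with the paper's proof: an energy (Green) identity on the truncated slab $\Sigma_{<R}$, taking imaginary parts and using $\Im q\le 0$ to get a sign on the lateral flux, splitting into propagating and evanescent Fourier modes, forcing the propagating far fields to vanish and invoking Rellich's uniqueness theorem, then concluding $u\in L^2(\Sigma)$ and applying the non-eigenvalue hypothesis. The differences are in two technical sub-steps, and they are worth noting. For the evanescent part, the paper treats the whole sum $u_0$ at once and quotes the decay estimates $u_0=\mathcal{O}(|x'|^{-n})$, $\nabla u_0=\mathcal{O}(|x'|^{-n})$ from \cite{Morgen_Werner_1987} (used both to kill the flux term $I_0$ and to get $u\in L^2$), whereas you argue mode by mode: a Green identity on an exterior annulus shows each evanescent mode has identically vanishing imaginary flux, and the summed exterior energy estimate with the uniform spectral gap $\kappa_l^2\ge\delta>0$ (which exists precisely because $k\ne \pi l/L$) gives the $L^2$ bound on the tail; this is self-contained but obliges you to justify the interchange of the sum over $l$ with the limits, which your Cauchy--Schwarz/trace argument does. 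For the propagating modes, the paper inserts the far-field expansion $u_l=a_l(\theta)e^{ik_l|x'|}|x'|^{-(n-2)/2}+\mathcal{O}(|x'|^{-n/2})$ from \cite{Col_Kress_book, Odell_2006} and concludes $a_l\equiv 0$, while you use the standard quantitative form of Rellich's lemma, expanding $\int_{|x'|=R}|\partial_r u_l-ik_lu_l|^2\,dS\to 0$; the two are equivalent, yours avoiding the expansion at the cost of a slightly more careful bookkeeping of signs in the finite sum. Finally, you make explicit the last step that the paper leaves implicit: upgrading $u\in L^2(\Sigma)$ with $(\mathcal{L}_{A,q}-k^2)u=0$ to $u\in H^2(\Sigma)\cap H^1_0(\Sigma)$ (via the mode-wise invertibility of $-\Delta_{x'}+l^2\pi^2/L^2$ as in Proposition \ref{prop_1}), so that the non-eigenvalue assumption, which refers to the operator on that domain, genuinely applies; this is a small but real gap in the paper's write-up that your version closes.
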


\begin{proof}
Let $u$ be an admissible solution of the  problem \eqref{eq_Dirichlet_app_first}  with $F=0$, and let $R>0$ be large so that $\supp(A)\subset\overline{\Sigma_{<R}}$. 
Multiplying \eqref{eq_Dirichlet_app_first}  by $\overline{u}$ and integrating over $\Sigma_{<R}$, using the fact that $A_j$ are real-valued, we get 
\begin{equation}
\label{eq_prop_as_2}
\begin{aligned}
0&=\int_{\Sigma_{<R}}(\mathcal{L}_{A,q}-k^2)u\overline{u}dx\\
&=\sum_{j=1}^n\bigg( \int_{\Sigma_{<R}}|(D_j+A_j)u|^2dx - i\int_{|x'|=R,0<x_n<L} \nu_j (D_j u)\overline{u}dx_ndS(x')\bigg)\\
&+ \int_{\Sigma_{<R}}(q-k^2)|u|^2dx.
\end{aligned}
\end{equation}
Taking the imaginary part in  \eqref{eq_prop_as_2}, we obtain that
\begin{equation}
\label{eq_im_part_1}
\Im \int_{|x'|=R,0<x_n<L} (\nu\cdot \nabla u)\overline{u}dx_ndS(x')=\int_{\Sigma_{<R}}\Im q |u|^2dx\le 0. 
\end{equation}
Let us write $u=u_0+u_1$, where
 \[
 u_0(x)=\sum_{l>kL/\pi}u_l(x')\sin\frac{l\pi x_n}{L},\quad  u_1(x)=\sum_{1\le l< kL/\pi}u_l(x')\sin\frac{l\pi x_n}{L},
 \]
 According to \cite{Morgen_Werner_1987}, we know that
 \begin{equation}
 \label{eq_resolv_p}
 u_0=\mathcal{O}(|x'|^{-n}),\quad \nabla u_0=\mathcal{O}(|x'|^{-n}),
 \end{equation}
 as $|x'|\to \infty$.  We have
 \[
 \int_{|x'|=R,0<x_n<L} (\nu\cdot \nabla u)\overline{u}dx_ndS(x')=I_0+I_1,
 \]
where by \eqref{eq_resolv_p},
\[
I_0=\int_{|x'|=R,0<x_n<L} (\nu \cdot\nabla u_0)\overline{u_0}dx_ndS(x')=\mathcal{O}(R^{-n-2}),
\]
as $R\to \infty$. 
Also using  the fact that $\nu\cdot\nabla=\frac{x'}{R}\cdot\nabla_{x'}$ along $|x'|=R$ , 
\[
I_1=\frac{L}{2}\sum_{1\le l< kL/\pi} \int_{|x'|=R} (\frac{x'}{R}\cdot\nabla_{x'} u_l(x'))\overline{u_l(x')}dS(x'),
\]
where 
$u_l$, $1\le l< kL/\pi$, satisfies the equation
\[
(-\Delta_{x'}+\frac{l^2\pi^2}{L^2}-k^2)u_l(x')=0,\quad x'\in \R^{n-1},\quad |x'|>R,
\]
and the Sommerfeld radiation condition \eqref{eq_radiation_Som}.  Then $u_l$ has the following asymptotic behavior 
\[
u_l(x')=a_l(\theta) \frac{e^{ik_l|x'|}}{|x'|^{(n-2)/2}}+\mathcal{O}(\frac{1}{|x'|^{n/2}}),\quad \theta=\frac{x'}{|x'|},
\]
as $|x'|\to \infty$, 
see \cite{Col_Kress_book, Odell_2006}.  Thus,
\[
\frac{x'}{|x'|}\cdot \nabla_{x'} u_l(x')=a_l(\theta) ik_l\frac{e^{ik_l|x'|}}{|x'|^{(n-2)/2}}+\mathcal{O}(\frac{1}{|x'|^{n/2}}),
\]
as $|x'|\to \infty$,
and therefore,
\begin{align*}
I_1&=\frac{L}{2}\sum_{1\le l< kL/\pi} \int_{|x'|=R} \bigg( \frac{|a_l(\theta)|^2 ik_l}{|x'|^{n-2}}+\mathcal{O}(\frac{1}{|x'|^{n-1}})\bigg)dS(x')\\
&=\frac{L}{2}\sum_{1\le l< kL/\pi} \int_{|x'|=1} \bigg( |a_l(\theta)|^2 ik_l+\mathcal{O}(\frac{1}{R})\bigg)dS(x').
\end{align*}
Letting $R\to \infty$ in \eqref{eq_im_part_1}, we obtain that 
\[
\sum_{1\le l< kL/\pi} \int_{|x'|=1}  |a_l(\theta)|^2 k_l dS(x')=0.
\]
Hence, $a_l\equiv 0$ for all $1\le l< kL/\pi$. By Rellich's theorem, $u_l=0$, $1\le l< kL/\pi$,  for $|x'|>R$, see \cite{Horm_1973}. Thus, $u=u_0$ for $|x'|>R$, and therefore, by 
\eqref{eq_resolv_p}, $u\in L^2(\Sigma)$.  Since $k^2$ is not an eigenvalue of $\mathcal{L}_{A,q}$, we conclude that  $u=0$ in $\Sigma$. The proof is complete.

\end{proof}

\begin{rem}

Let $A\in W^{1,\infty}(\Sigma,\R^n)\cap \mathcal{E}'(\overline{\Sigma},\R^n)$ and  $q\in L^\infty(\Sigma,\R)\cap \mathcal{E}'(\overline{\Sigma},\R)$. Assume that  $k\ge \pi/L$ is such that $k^2$ is not an eigenvalue of the operator $\mathcal{L}_{A,q}$ and  $k\ne \pi l/L$, for all $l=1,2,\dots$. 
Then it follows from the arguments in the proof of Proposition \emph{\ref{prop_admiss_fr}} that $k$ is admissible for $\mathcal{L}_{-A,q}$.  
\end{rem}

Let $R>0$ be such that  $\mathcal{L}_{A,q}=-\Delta$ in $\Sigma_{>R}$.  Let $S>R$. The operator $\mathcal{L}_{A,q}$ in $L^2(\Sigma_{<S})$, equipped with the domain $H^2(\Sigma_{<S})\cap H^1_0(\Sigma_{<S})$, which we shall denote by $\mathcal{L}_{A,q}^D$,  is closed with discrete spectrum. Let $z\in\C$ be such that $\Im z\ne 0$ and $z$ is in the resolvent set of $\mathcal{L}_{A,q}$. 
Let $\phi\in C^\infty(\overline{\Sigma_{<S}})$ be compactly supported, $0\le \phi\le 1$, and such that $\phi=1$ in $\overline{\Sigma_{<R}}$.

\begin{prop} 
\label{prop_Lax-Phil}
Let $k\ge \pi/L$ and let the assumptions \emph{(A.I)}, \emph{(A.II)} be satisfied. Then  for any 
$F\in L^2(\Sigma_{<S})$, there exists a unique $g\in L^2(\Sigma_{<S})$
such that 
\begin{equation}
\label{eq_u_in_prop}
u=\phi(\mathcal{L}_{A,q}^D-z)^{-1} g+ (1-\phi)R_0(k) g
\end{equation}
is the admissible solution of the  Dirichlet problem \eqref{eq_Dirichlet_app_first} 
in the sense of Definition \emph{\ref{def_adm}}. 
\end{prop}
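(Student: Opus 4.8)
The plan is to recast the solvability of \eqref{eq_Dirichlet_app_first} as a Fredholm equation for $g$, obtained by applying $\mathcal{L}_{A,q}-k^2$ to the ansatz \eqref{eq_u_in_prop}, and then to combine the Fredholm alternative with the uniqueness assumption (A.II). Throughout I write $v_1=(\mathcal{L}_{A,q}^D-z)^{-1}g$ and $v_2=R_0(k)g$; the latter is well defined because $g$, extended by zero, lies in $L^2(\Sigma)\cap\mathcal{E}'(\overline{\Sigma})$.

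First I would compute $(\mathcal{L}_{A,q}-k^2)u$ for $u$ as in \eqref{eq_u_in_prop}. Since $\phi\equiv1$ on $\overline{\Sigma_{<R}}$ and $\mathcal{L}_{A,q}=-\Delta$ on $\Sigma_{>R}$, the support of $\nabla\phi$ lies in $\{R<|x'|<S\}$, where the magnetic and potential terms vanish, so $[\mathcal{L}_{A,q},\phi]=[-\Delta,\phi]$ there. Using $(\mathcal{L}_{A,q}^D-z)v_1=g$ and $(-\Delta-k^2)v_2=g$, the commutator terms combine to give
\[
(\mathcal{L}_{A,q}-k^2)u = g + Kg,\qquad Kg:=(z-k^2)\phi v_1+[-\Delta,\phi](v_1-v_2).
\]
Thus \eqref{eq_Dirichlet_app_first} with right-hand side $F$ is equivalent to $(I+K)g=F$ in $L^2(\Sigma_{<S})$. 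Here $g\mapsto v_1$ is bounded $L^2(\Sigma_{<S})\to H^2(\Sigma_{<S})$ (as $z$ is in the resolvent set), and $g\mapsto v_2$ is bounded into $H^2$ of any bounded subset; since $\phi$ and the first-order operator $[-\Delta,\phi]$ are supported in the bounded set $\overline{\Sigma_{<S}}$, the Rellich embedding shows that $K\colon L^2(\Sigma_{<S})\to L^2(\Sigma_{<S})$ is compact.

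By the Fredholm alternative it then suffices to prove that $I+K$ is injective. Suppose $(I+K)g=0$. The identity above shows that the corresponding $u$ solves $(\mathcal{L}_{A,q}-k^2)u=0$ in $\Sigma$ with $u|_{\partial\Sigma}=0$, and for $|x'|>S$ one has $\phi=0$, so $u=R_0(k)g$ is admissible in the sense of Definition \ref{def_adm}. Hence $u$ is an admissible solution of the homogeneous problem, and assumption (A.II) forces $u\equiv0$. Reading off the two regions where $u$ reduces to a single term yields $v_1=0$ on $\Sigma_{<R}$ and $v_2=0$ on $\Sigma_{>S}$. In particular $v_1$ has vanishing Cauchy data on $\{|x'|=R\}$ and, since $\mathrm{supp}\,A\cup\mathrm{supp}\,\tilde q\subset\overline{\Sigma_{<R}}$, the magnetic and potential terms act where $v_1$ vanishes; consequently $g=(\mathcal{L}_{A,q}^D-z)v_1=(-\Delta-z)v_1$ on all of $\Sigma_{<S}$, with $g=0$ on $\Sigma_{<R}$. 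Feeding this into $v_2=R_0(k)g$, which has vanishing Cauchy data on $\{|x'|=S\}$, and exploiting the coupling $\phi v_1=-(1-\phi)v_2$ dictated by $u\equiv0$ together with unique continuation for $-\Delta-z$ and $-\Delta-k^2$ across the annulus $\{R<|x'|<S\}$, one deduces $g\equiv0$; this is the step carried out as in the Lax--Phillips scheme of \cite{LiUhl2010, Isakov_book, Lax_Phillips}.

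Therefore $I+K$ is injective, hence boundedly invertible, so for every $F\in L^2(\Sigma_{<S})$ there is a unique $g$ with $(I+K)g=F$. For this $g$ the function $u$ in \eqref{eq_u_in_prop} satisfies $(\mathcal{L}_{A,q}-k^2)u=F$ and $u|_{\partial\Sigma}=0$, and it is admissible because $u=R_0(k)g$ for $|x'|>S$; uniqueness of $g$ is exactly the injectivity just established. The step I expect to be most delicate is the deduction $g\equiv0$ from $u\equiv0$: the vanishing of $v_1$ inside and of $v_2$ outside must be propagated through the annulus where $g$ is supported, which requires the structural identity $\phi v_1+(1-\phi)v_2=0$ and unique continuation rather than a direct argument. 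The other point demanding care is the genuine compactness of $K$, i.e.\ the correct bookkeeping of supports and the $H^2\!\to\!L^2$ and $H^1\!\to\!L^2$ gains on the relevant bounded regions.
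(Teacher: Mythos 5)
Your overall framework coincides with the paper's: the same reduction of the ansatz \eqref{eq_u_in_prop} to a Fredholm equation $(I+K)g=F$ (your $K$ is exactly the paper's $T$, after the observation that $[\mathcal{L}_{A,q},\phi]=[-\Delta,\phi]$ on $\supp\nabla\phi$), the same compactness argument via Rellich, and the same use of (A.II) to reduce everything to injectivity of $I+K$. The genuine gap is in that injectivity step, i.e.\ deducing $g\equiv 0$ from $u\equiv 0$. You correctly extract $v_1=0$ on $\{\phi=1\}$, $v_2=0$ on $\{\phi=0\}$, and $g=0$ on $\Sigma_{<R}$, but you then dispose of the remaining transition region by an appeal to ``unique continuation for $-\Delta-z$ and $-\Delta-k^2$ across the annulus.'' That mechanism cannot work here: in the annulus both $v_1$ and $v_2$ satisfy \emph{inhomogeneous} equations whose right-hand side is precisely the unknown $g$ you are trying to kill, so there is no homogeneous equation to which unique continuation applies; and if you eliminate $g$ by subtraction --- using the algebraic identities $v_2=\phi(v_2-v_1)$ and $v_1=-(1-\phi)(v_2-v_1)$, which follow from $\phi v_1+(1-\phi)v_2=0$ --- you obtain the single equation
\[
\bigl(-\Delta-z-(k^2-z)\phi\bigr)w=0\quad\textrm{in}\quad \Sigma_{<S},\qquad w:=v_2-v_1,
\]
for which $w$ vanishes on $\partial\Sigma_{<S}$ but on no open subset of $\Sigma_{<S}$ (where $\phi=1$ it equals $v_2$, where $\phi=0$ it equals $-v_1$), so unique continuation has no starting point.

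What actually closes the argument --- and what the paper does --- is an energy (imaginary-part) argument that crucially uses $\Im z\neq 0$, a hypothesis your proof never invokes, which is the telltale sign of the gap. Pairing the equation above with $\overline{w}$ and integrating over $\Sigma_{<S}$ (all boundary terms vanish because $v_1\in H^1_0(\Sigma_{<S})$, while $v_2=u=0$ near $\{|x'|=S\}$ and on $\Gamma_1\cup\Gamma_2$), one gets
\[
\int_{\Sigma_{<S}}|\nabla w|^2dx-z\int_{\Sigma_{<S}}|w|^2dx-(k^2-z)\int_{\Sigma_{<S}}\phi|w|^2dx=0,
\]
and taking imaginary parts yields $\Im z\int_{\Sigma_{<S}}(1-\phi)|w|^2dx=0$. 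Since $\Im z\ne 0$ and $0\le\phi\le 1$, this forces $w=0$ on $\{\phi\neq1\}$; hence $v_1=-(1-\phi)w=0$ on all of $\Sigma_{<S}$, so $g=(\mathcal{L}_{A,q}^D-z)v_1=0$. (The paper phrases this last step slightly differently, showing $Tg=0$ first on $\{\phi=1\}$ and then on its complement, but the substance is the same imaginary-part identity.) So your proposal is structurally correct, but the one step you yourself flag as delicate is precisely the one that is missing, and the tool you propose for it is the wrong one.
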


\begin{proof} 
Applying the operator $\mathcal{L}_{A,q}-k^2$ to $u$ in  \eqref{eq_u_in_prop}, we get
\[
(\mathcal{L}_{A,q}-k^2)u=g+Tg,
\]
where
\begin{equation}
\label{eq_T_g}
Tg:=\phi(z-k^2)(\mathcal{L}_{A,q}^D-z)^{-1} g+[\mathcal{L}_{A,q},\phi]((\mathcal{L}_{A,q}^D-z)^{-1} g-R_0(k)g).
\end{equation}
Given $F\in L^2(\Sigma_{<S})$, we would like to find $g\in L^2(\Sigma_{<S})$ such that 
\begin{equation}
\label{eq_fred}
g+Tg=F. 
\end{equation}

Let us first check that 
the operator 
\[
T:  L^2(\Sigma_{<S})\to L^2(\Sigma_{<S})
\]
is compact. Indeed, we have
\begin{align*}
(\mathcal{L}_{A,q}^D-z)^{-1}&: L^2(\Sigma_{<S})\to  H^2(\Sigma_{<S})\cap H^1_0(\Sigma_{<S})\hookrightarrow L^2(\Sigma_{<S}),\\
R_0(k)&: L^2(\Sigma_{<S})\to H^2_{\textrm{loc}}(\overline{\Sigma}).
\end{align*}
Now  the commutator  is given by
\[
[\mathcal{L}_{A,q},\phi]=-2\nabla\phi\cdot\nabla-\Delta\phi+2A\cdot D\phi,
\]
and 
we get
\begin{align*}
[\mathcal{L}_{A,q},\phi]&: H^2(\Sigma_{<S})\to H^1(\Sigma_{<S})\hookrightarrow L^2(\Sigma_{<S}),\\
[\mathcal{L}_{A,q},\phi]& :H^2_{\textrm{loc}}(\overline{\Sigma})\to H^1(\Sigma_{<S})\hookrightarrow L^2(\Sigma_{<S}),
\end{align*}
which show the compactness of the operator $T$.  

Hence, the operator $I+T$ is Fredholm of index zero and therefore, to show that  the equation \eqref{eq_fred} has a unique solution, it suffices to check that  $F=0$ implies that $g=0$. 

Assume that $F=0$. Then the assumption (A.II) implies that $u=0$ in $\Sigma$. 
Let $u_1:=(\mathcal{L}_{A,q}^D-z)^{-1} g$ and $u_2:=R_0(k)g$. Then 
\begin{equation}
\label{eq_u_0}
\phi u_1+(1-\phi)u_2=0.
\end{equation}

Let us first  consider the set
\[
\Sigma_1:=\{x\in \Sigma_{<S}: \phi(x)=1\}.
\]
We have $u_1=0$ in $\Sigma_1$, and  it follows from \eqref{eq_T_g} that $Tg=0$ in $\Sigma_1$. 
Hence, \eqref{eq_fred} implies that $g=0$ in $\Sigma_1$.

Consider now the set
\[
\Sigma_1^c:=\{x\in \Sigma_{<S}: \phi(x)\ne 1\}.
\]
It follows from \eqref{eq_u_0} that $u_2=\phi(u_2-u_1)$.  We have
\[
(-\Delta-z)(u_2-u_1)=(k^2-z)\phi(u_2-u_1),
\]
and furthermore, $u_1|_{\p\Sigma_{<S}}=u_2|_{\p\Sigma_{<S}}=0$.  Thus,
\begin{equation}
\label{eq_imp_id}
\begin{aligned}
\int_{\Sigma_{<S}} (k^2-z)\phi|u_2-u_1|^2dx &= \int_{\Sigma_{<S}}(-\Delta-z)(u_2-u_1)(\bar u_2-\bar u_1 )dx\\
&= \int_{\Sigma_{<S}}(|\nabla(u_2-u_1)|^2-z|u_2-u_1|^2)dx. 
\end{aligned}
\end{equation}
Taking the imaginary part in \eqref{eq_imp_id}, we obtain that 
\[
\int_{\Sigma_{<S}} (1-\phi)|u_2-u_1|^2dx=0. 
\]
Hence, $u_2-u_1=0$ in  $\Sigma_1^c$. Thus, \eqref{eq_u_0} implies 
that $\phi u_1=0$ in  $\Sigma_1^c$.  It follows from
\eqref{eq_T_g}  that $Tg=(z-k^2)\phi u_1=0$  in  $\Sigma_1^c$, and therefore, $g=0$ in $\Sigma_1^c$.  The proof is complete. 

\end{proof}

Let now  $k\ge 0$. It will be convenient to have the following definition.

\begin{defn} 

\label{def_adm_k}
A frequency  $k\ge 0$ is said to be admissible for the operator $\mathcal{L}_{A,q}$, if the following holds:

\begin{itemize}

\item[(i)] if $k<\pi/L$, then $k^2$ does not belong to the discrete spectrum of the operator $\mathcal{L}_{A,q}$, equipped with the domain $H^1_0(\Sigma)\cap H^2(\Sigma)$;

\item[(ii)] if $k\ge \pi/L$, then the assumptions \emph{(A.I)} and \emph{(A.II)} are fulfilled.  

\end{itemize}

\end{defn}

\begin{defn}

\label{defn_add_sol_app_2}

Let $k\ge 0$ be admissible and $F\in L^2(\Sigma)\cap\mathcal{E}'(\overline{\Sigma})$. Then a solution $u$ of the problem \eqref{eq_Dirichlet_app_first} is said to be admissible, if the following holds:

\begin{itemize}

\item[(i)] if $k<\pi/L$, then $u\in H^2(\Sigma)$ is the unique solution given by Corollary \emph{\ref{cor_disc_spec_sol}};  
\item[(ii)] if $k\ge \pi/L$, then $u$ is the admissible solution in the sense of Definition \emph{\ref{def_adm}}.
\end{itemize}

\end{defn}

Consider the following Dirichlet problem
\begin{equation}
\label{eq_final_ap}
\begin{aligned}
(\mathcal{L}_{A,q}(x,D)-k^2)u(x)&=0\quad \textrm{in}\quad \Sigma,\\
u&=f\quad\textrm{on}\quad \Gamma_1,\\
u&=0\quad\textrm{on}\quad \Gamma_2,
\end{aligned}
\end{equation}
where $k\ge 0$ is fixed admissible, and  $f\in H^{3/2}(\Gamma_1)\cap \mathcal{E}'(\Gamma_1)$. Let $F\in H^2(\Sigma)\cap \mathcal{E}'(\overline{\Sigma})$ be such that 
$F|_{\Gamma_1}=f$ and $F|_{\Gamma_2}=0$. 
We solve the problem \eqref{eq_final_ap} by setting
\[
u=F+u_0,
\]
where $u_0$ is the admissible solution of the problem,
\begin{align*}
(\mathcal{L}_{A,q}(x,D)-k^2)u_0&=(k^2-\mathcal{L}_{A,q}(x,D))F\quad \textrm{in}\quad \Sigma,\\
u_0&=0\quad\textrm{on}\quad \p \Sigma,
\end{align*}
in the sense of Definition \ref{defn_add_sol_app_2}.  We have $u\in H^2_{\textrm{loc}}(\overline{\Sigma})$.  We shall refer to this solution $u$ of the problem \eqref{eq_final_ap} as the admissible solution.

In the main text we shall have to use the following Green's formula in the infinite slab $\Sigma$. 

\begin{prop}

\label{prop_Greens_app}

Let $k\ge 0$ be an admissible frequency for $\mathcal{L}_{A,q}$ and $\mathcal{L}_{-A,q}$, let $u$ be the admissible solution to the problem \eqref{eq_final_ap} with some $f\in H^{3/2}(\Gamma_1)\cap \mathcal{E}'(\Gamma_1)$,  and $\overline{v}$ be the admissible solution of the problem
\begin{align*}
(\mathcal{L}_{-A,q}-k^2)\overline{v}&=g, \quad \textrm{in}\quad \Sigma,\\
\overline{v}|_{\p \Sigma}&=0,
\end{align*}
for some $g\in L^2(\Sigma)\cap \mathcal{E}'(\overline{\Sigma})$. Then we have
\[
(\mathcal{L}_{\overline{A},\overline{q}} v,u)_{L^2(\Sigma)}-(v,\mathcal{L}_{A,q} u)_{L^2(\Sigma)}=-(\p_\nu v, u)_{L^2(\Gamma_1)}. 
\]
Here $\nu$ is the unit outer normal to $\Gamma_1$. 
\end{prop}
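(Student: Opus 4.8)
The plan is to reduce to the bounded-domain Green formula \eqref{eq_Green} by truncating the slab and letting the truncation grow. Fix $R>0$ so large that the compact supports of $A$, $q$, of $g$, and of $f$ are all contained in $\{|x'|<R\}$, and write $C_R=\{x:|x'|=R,\ 0<x_n<L\}$ for the lateral part of $\p\Sigma_{<R}$, so that $\p\Sigma_{<R}=(\Gamma_1\cap\overline{\Sigma_{<R}})\cup(\Gamma_2\cap\overline{\Sigma_{<R}})\cup C_R$. Since $\overline{v}$ is admissible for $\mathcal{L}_{-A,q}$ and $u$ is admissible for $\mathcal{L}_{A,q}$, both $u$ and $v$ lie in $H^2_{\mathrm{loc}}(\overline{\Sigma})$ and restrict to $H^2(\Sigma_{<R})$. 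Applying \eqref{eq_Green} with the relabeling $(u,v,A,q)\mapsto(v,u,\overline{A},\overline{q})$ (the identity persists on the piecewise-smooth domain $\Sigma_{<R}$, being a form of the divergence theorem) gives
\[
(\mathcal{L}_{\overline{A},\overline{q}}v,u)_{L^2(\Sigma_{<R})}-(v,\mathcal{L}_{A,q}u)_{L^2(\Sigma_{<R})}=(v,(\p_\nu+i\nu\cdot A)u)_{L^2(\p\Sigma_{<R})}-((\p_\nu+i\nu\cdot\overline{A})v,u)_{L^2(\p\Sigma_{<R})}.
\]
Because $(\mathcal{L}_{A,q}-k^2)u=0$ and $(\mathcal{L}_{\overline{A},\overline{q}}-k^2)v=\overline{g}$ in $\Sigma$, the two contributions $k^2(v,u)_{L^2(\Sigma_{<R})}$ cancel and the left-hand side collapses to $\int_{\Sigma_{<R}}\overline{g}\,\overline{u}\,dx$, which for $R$ large equals $(\overline{g},u)_{L^2(\Sigma)}$ since $g$ is compactly supported. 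This is exactly the quantity the left side of the asserted identity denotes.

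Next I would dispose of the boundary terms. On $\Gamma_2\cap\overline{\Sigma_{<R}}$ both $u$ and $v$ vanish, so both surface integrals are zero there. On $\Gamma_1\cap\overline{\Sigma_{<R}}$ we have $v|_{\Gamma_1}=0$; this annihilates the first term entirely and also the pointwise factor $i\nu\cdot\overline{A}\,v$ in the second, leaving precisely $-(\p_\nu v,u)_{L^2(\Gamma_1\cap\Sigma_{<R})}$. Since $u|_{\Gamma_1}=f$ is compactly supported, this surface integral already equals $-(\p_\nu v,u)_{L^2(\Gamma_1)}$ once $R$ exceeds $\supp(f)$. On $C_R$ the potential $A$ vanishes for $R$ large, so the lateral contribution reduces to the Wronskian-type term $\int_{C_R}\bigl(v\,\overline{\p_\nu u}-(\p_\nu v)\,\overline{u}\bigr)\,dS$.

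It remains — and this is the crux — to show that this lateral integral tends to $0$ as $R\to\infty$. Expanding $u$ and $v$ in the Fourier series \eqref{eq_Fourier_dec} in $x_n$ and using orthogonality of $\{\sin(l\pi x_n/L)\}$ together with $\p_\nu=\tfrac{x'}{|x'|}\cdot\nabla_{x'}$ on $C_R$, the integral decouples as $\tfrac{L}{2}\sum_l\int_{|x'|=R}\bigl(v_l\,\overline{\p_r u_l}-(\p_r v_l)\,\overline{u_l}\bigr)\,dS(x')$. Here I would exploit the opposite orientation of the radiation conditions: for each propagating mode $l<kL/\pi$, admissibility of $u$ forces $u_l$ to be \emph{outgoing}, satisfying \eqref{eq_radiation_Som} with $\p_r u_l=ik_l u_l+o(|x'|^{-(n-2)/2})$, whereas admissibility of $\overline{v}$ makes $(\overline{v})_l=\overline{v_l}$ outgoing, so that $v_l$ is \emph{incoming}, with $\p_r v_l=-ik_l v_l+o(|x'|^{-(n-2)/2})$. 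With these opposite signs the leading $\mathcal{O}(|x'|^{-(n-2)})$ terms cancel in $v_l\,\overline{\p_r u_l}-(\p_r v_l)\,\overline{u_l}$, leaving an $o(|x'|^{-(n-2)})$ integrand whose integral over the sphere $|x'|=R$ (surface measure $\sim R^{n-2}$) vanishes in the limit. The evanescent block $l>kL/\pi$ and, when $n\ge4$, the threshold mode $l=kL/\pi$ are controlled collectively by the Morgenr\"other--Werner decay estimates \eqref{eq_resolv_p} and \eqref{eq_radiation_Som_2} (which apply to $v$ as well, by conjugation), each giving a lateral contribution of size $o(1)$. The genuinely delicate point is this sign-matching of the radiation conditions; it is exactly where the hypothesis that $k$ be admissible for the real transpose $\mathcal{L}_{-A,q}$, and not merely for $\mathcal{L}_{A,q}$, is used. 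Letting $R\to\infty$ then yields $(\overline{g},u)_{L^2(\Sigma)}=-(\p_\nu v,u)_{L^2(\Gamma_1)}$, which is the claim.
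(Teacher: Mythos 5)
Your proposal is correct and follows essentially the same route as the paper: truncate the slab at $|x'|=R$, apply the bounded-domain Green formula \eqref{eq_Green}, observe that the Dirichlet conditions leave only $-(\p_\nu v,u)_{L^2(\Gamma_1)}$ on the horizontal boundaries, and kill the lateral Wronskian term as $R\to\infty$ by decomposing into Fourier modes and using the radiation condition \eqref{eq_radiation_Som} for propagating modes, the decay \eqref{eq_resolv_p} for the evanescent block, and \eqref{eq_radiation_Som_2} for the threshold mode when $n\ge 4$. Your explicit sign-matching of the outgoing condition for $u_l$ against the outgoing condition for $\overline{v_l}$ (equivalently, incoming $v_l$ under the sesquilinear pairing) is exactly the cancellation the paper uses, merely stated more explicitly, and it correctly locates where admissibility for the transpose $\mathcal{L}_{-A,q}$ enters.
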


\begin{proof} First notice that $v$ satisfies $(\mathcal{L}_{\overline{A},\overline{q}}-k^2)v=\overline{g}$ in $\Sigma$. 
Let $R>0$ be such that $\supp(A^{(j)})\subset \overline{\Sigma_{<R}}$. Setting
\[
\Gamma_j\cap \p \Sigma_{<R}=d_j(R), \quad j=1,2,\quad \p \Sigma_{<R}\cap \Sigma=d_3(R),
\]
 we have $A^{(j)}=0$ on $d_3(R)$. By \eqref{eq_Green}, we get
 \begin{align*}
(\mathcal{L}_{\overline{A},\overline{q}}v,u)_{L^2(\Sigma_{<R})}-(v,\mathcal{L}_{A,q}u)_{L^2(\Sigma_{<R})}
=-(\p_\nu v,u)_{L^2(d_1(R))}\\
+
(v,\p_\nu u)_{L^2(d_3(R))}
-(\p_\nu v,u)_{L^2(d_3(R))}.
 \end{align*}
 
 We have to show that $(v,\p_\nu u)_{L^2(d_3(R))}
-(\p_\nu v,u)_{L^2(d_3(R))}$ tends to zero as $R\to \infty$. 
 Consider the case $k\ge \pi/L$ for the maximum of generality. 
 Let us write $u=u_0+u_1$, where
 \[
 u_0(x)=\sum_{l>kL/\pi}u_l(x')\sin\frac{l\pi x_n}{L},\quad  u_1(x)=\sum_{1\le l\le kL/\pi}u_l(x')\sin\frac{l\pi x_n}{L},
 \]
 and similarly, $v=v_0+v_1$.  We set
 \begin{align*}
 \int_{|x'|=R}\int_0^L (u\overline{\p_{\nu} v}-\overline{v}\p_{\nu}u)dx_ndS(x')=I_1+I_2,
 \end{align*}
 where
 \[
 I_1=\int_{|x'|=R}\int_0^L (u_0\overline{\p_{\nu} v_0}-\overline{v_0}\p_{\nu}u_0)dx_ndS(x')=\mathcal{O}(R^{-n-2}),
 \]
 as $R\to \infty$, in view of \eqref{eq_resolv_p} for $u_0$ and $v_0$.  Here 
 \begin{align*}
 I_2&=\int_{|x'|=R}\int_0^L (u_1\overline{\p_{\nu} v_1}-\overline{v_1}\p_{\nu}u_1)dx_ndS(x')\\
 &=
 \frac{L}{2}\sum_{1\le l\le kL/\pi}\int_{|x'|=R}(u_l(x')\overline{\p_{\nu} v_l(x')}-\overline{v_l(x')}\p_{\nu}u_l(x'))dS(x'). 
 \end{align*}
 Using the fact that $\p_{\nu}=(x'/R)\cdot\nabla_{x'}$ along $|x'|=R$ together with  
\eqref{eq_radiation_Som}, for $l<kL/\pi$, we get
\[
\int_{|x'|=R}(u_l(x')\overline{\p_{\nu} v_l(x')}-\overline{v_l(x')}\p_{\nu}u_l(x'))dS(x')=o(R^{-(n-2)})\int_{|x'|=R}dS(x')=o(1),
\]
as $R\to \infty$. 
Finally, if $k$ is such that $l=kL/\pi$ and $n\ge 4$, using  \eqref{eq_radiation_Som_2}, we obtain that
\[
\int_{|x'|=R}(u_l(x')\frac{x'}{R}\cdot \nabla_{x'}\overline{v_l(x')}-\overline{v_l(x')}\frac{x'}{R}\cdot \nabla_{x'} u_l(x'))dS(x')=\mathcal{O}(R^{3-n}),
\]
as $R\to \infty$.
The proof is complete.

\end{proof}

\end{appendix}

\section*{Acknowledgements}  

K.K. is grateful to Pavel Exner for a helpful discussion.  The research of K.K. is supported by the
Academy of Finland (project 125599).  The research of M.L. 
is partially supported 
 by the Academy of Finland Center of Excellence programme 213476. The research of
G.U. is partially supported by the National Science Foundation.

\end{document}